\documentclass[12pt,letterpaper]{amsart}

\usepackage[makeindex]{imakeidx}
\usepackage{amsmath,amsthm, amsbsy,amsfonts,amssymb, txfonts,tabularx,booktabs, wasysym}
\usepackage{euscript} %added by Tathagata

\usepackage[normalem]{ulem}
\usepackage{stmaryrd,mathrsfs,graphicx, supertabular, amscd, tikz-cd, tikz}

\usetikzlibrary{graphs}
\usetikzlibrary{graphs.standard}
\usepackage{stackrel}
\usetikzlibrary{matrix,arrows,decorations.pathmorphing}

\usepackage[active]{srcltx}
\allowdisplaybreaks
\numberwithin{equation}{section}

\usepackage[
	hypertexnames=false,
	hyperindex,
	pagebackref,
	pdftex,
	breaklinks=true,
	bookmarks=false,
	colorlinks,
	linkcolor=blue,
	citecolor=blue,
	urlcolor=blue,
]{hyperref}
\usepackage{hyperref}

\def\Art{\mathop{\rm Art}}
\def\Ps{\EuScript{P}}
\def\semidirect{\rtimes}

\def\C{\mathbb{C}}

\def\Cs{\EuScript{C}}
\def\Ds{\EuScript{D}}
\def\Es{\EuScript{E}}
\def\Is{\EuScript{I}}
\def\Ls{\EuScript{L}}
\def\Rs{\EuScript{R}}
\def\M{\EuScript{M}}
\def\N{\EuScript{N}}
\def\Ts{\EuScript{T}}
\def\Xs{\EuScript{X}}

\def\Ss{\EuScript{S}}
\def\iso{\cong}

\def\Z{\mathbb{Z}}

\def\Aut{\mathop{\rm Aut}}

\def\AA{{\mathbb A}}
\def\BB{{\mathbb B}}

\def\CC{{\mathbb C}}

\def\FF{{\mathbb F}} 
\def\GG{{\mathbb G}}

\def\PP{{\mathbb P}}

\def\QQ{{\mathbb Q}} 
\def\RR{{\mathbb R}}

\def\ZZ{{\mathbb Z}}

\def\ssm{\smallsetminus}

\def\cub{\mathrm{cub}}

\def\G{\Gamma}
\def\g{\gamma}

\def\orb{{\rm orb}}

\def\red{{\rm red}} 
\def\reg{{\rm reg}}

\def\int{{\rm int}}

\def\st{{\rm st}}
\def\sst{{\rm sst}}
\def\sing{{\rm sing}}

\def\bs{\backslash}
\newcommand{\eps}{\varepsilon}

\newcommand{\p}{\partial}

\def\Ecal{{\mathcal D}}
\def\Ecal{{\mathcal E}}

\def\Lcal{{\mathcal L}}
\def\Mcal{{\mathcal M}}

\def\Ocal{{\mathcal O}}
\def\Pcal{{\mathcal P}}

\def\Vcal{{\mathcal V}}

\def\Escr{{\mathscr E}}

\def\Rscr{{\mathscr R}}

\def\la{\langle}
\def\ra{\rangle}

\def\Sfrak{\mathfrak{S}}

\newcommand\ar{\operatorname{Art}}

\newcommand\aut{\operatorname{Aut}}

\newcommand\bl{\operatorname{Bl}}

\newcommand\pic{\operatorname{Pic}}

\newcommand\sym{\operatorname{Sym}}
\newcommand\GL{\operatorname{GL}}

\newcommand\Ind{\operatorname{Ind}}
\newcommand\Pop{\operatorname{P}}
\newcommand\PGL{\operatorname{PGL}}

\newcommand\SL{\operatorname{SL}}

\newcommand\Star{\operatorname{Star}}

\newcommand\Tcal{\operatorname{{\mathcal T}}}

\newcommand\Orth{\operatorname{O}}

\newcommand\PSL{\operatorname{PSL}}
\newcommand\U{\operatorname{U}}

\newcommand\PU{\operatorname{PU}}
\newcommand\Sp{\operatorname{Sp}}

\newcommand{\abs}[1]{\lvert#1\rvert}

\newtheorem{theorem}{Theorem}[section]
\newtheorem{lemma}[theorem]{Lemma}
\newtheorem{proposition}[theorem]{Proposition}
\newtheorem{corollary}[theorem]{Corollary}

\theoremstyle{definition} 
\newtheorem{definition}[theorem]{Definition}

\theoremstyle{remark}

\newtheorem{remark}[theorem]{Remark}

\newtheorem{question}[theorem]{Question}

\newtheorem{notation}[theorem]{Notation}

\title[Cubic surfaces moduli space and its fundamental group]{The moduli space of cubic surfaces and its orbifold fundamental group}
\author{Daniel Allcock}
\address{Department of Mathematics\\University of Texas at Austin\\ Austin, TX 78712}
\author{Tathagata Basak}
\address{Department of Mathematics\\Iowa State University \\ Ames, IA 50011}
\author{Eduard Looijenga}
\address{Mathematisch Instituut, Universeit Utrecht, Nederland}
\thanks{TB was partially supported by Simons Foundation Collaboration Grant 637005.}
\subjclass[2020]{Primary: 14F35; Secondary: 20F36, 14J26.}
\begin{document}
\begin{abstract} 
We find  a new presentation of the orbifold fundamental group of the moduli space of cubic surfaces, based on the Artin group of the Petersen graph.
    This group has
     two well-known quotients, which appear in the simplest possible way:  
    killing the squares resp.\ sixth powers of our generators yields
     the Coxeter group  $W(E_6)$ resp.\ a certain arithmetic  group in $\PU(1,4)$.
    Along the way we present a certain central extension by~$\Z$, of the genus zero mapping class group
    with any number of punctures.  The $6$-puncture case is essential
    in this paper, and the general case may have independent interest.
\end{abstract}
\maketitle
\begin{center}
\emph{Dedicated to the memory of Domingo Toledo (1945-2025).}   
\end{center}
\section{Introduction}

Smooth cubic surfaces  are parametrized by  a $4$-dimensional variety $\M_\cub$. This variety is an orbifold (a Deligne-Mumford stack), and our main goal is to give a pretty
and very symmetric finite presentation of its orbifold fundamental group.  
%xx it was shown by Allcock \cite[Cor.\ 3.4]{A00} to be aspherical 
Symmetric presentations must exist, by choosing a basepoint representing
a cubic surface with many symmetries.
We use the Clebsch cubic surface $X_C$, 
defined in~$P^4$ by
$$
X_0+\cdots +X_4=X_0^3+\cdots+X_4^3=0.
$$
Its  automorphism group  $\Sfrak_5$ is a subgroup 
of $\pi_1^\orb(\M_\cub,[X_C])$, so there must be a presentation manifesting this 
$\Sfrak_5$ symmetry.
We will describe 
$\pi_1^\orb(\M_\cub,\discretionary{}{}{}[X_C])$ as a quotient of the Artin group of the Petersen graph~$\Ps$ (see figure \ref{fig-Petersen}):
\begin{figure}
\tikzgraphsset{edges={draw,semithick}, nodes={circle,draw,semithick}}
 \tikz \graph[math nodes, clockwise]
    { subgraph I_n [V={0,1,2,3,4}] --
      subgraph C_n [V={5,6,7,8,9},radius=1.25cm];
      {[cycle] 0,2,4,1,3} };
      \caption{The Petersen graph.}
      \label{fig-Petersen}
\end{figure}
The Artin relations, and the additional relations needed to obtain
$\pi_1^\orb(\M_\cub)$, are invariant 
under the automorphism group $\Sfrak_5$ of~$\Ps$.

The Artin group of a graph has one generator~$\sigma_\nu$ for each vertex~$\nu$, with two
generators $\sigma_v, \sigma_{v'}$ braiding 
($\sigma_v\sigma_{v'}\sigma_v=\sigma_{v'}\sigma_v\sigma_{v'}$) or commuting
according to whether or not $v$ and $v'$ are joined.
Symmetries of the graph obviously act on the Artin group.
So we may form the semidirect product $\Art(\Ps)\rtimes \Aut(\Ps)$. 
On this we impose an $\Aut(\Ps)$-orbit of relations, or equivalently any one of
them.  
Choose a vertex $v$ of $\Ps$; the vertices not connected to it form a hexagon.
Label its vertices $v_{i\in\ZZ/6}$ cyclically in either direction.
For example,  one could take $v=0$ in the figure above,  
and then $(v_0,\ldots, v_5)=(1,4,9,8,7,6)$.  One can check that 
there is a unique $\rho\in \aut(\Ps)$ which fixes $v$ and 
sends each $v_i$ to $v_{i+1}$.  The relation associated to  $\nu,\nu_0,\dots,\nu_5$ is
\begin{equation}
    \label{EqRelation}
    \rho\sigma_{v}\equiv\sigma_{v_0}\sigma_{v_1}\sigma_{v_2}\sigma_{v_3}\sigma_{v_4}.
\end{equation}
(Although $\sigma_{\nu_5}$ does not appear explicitly, it appears in
the definition of~$\rho$.)

\begin{theorem}[Main theorem]
    Let $G(\Ps)$ be
    the quotient of $\Art(\Ps)\semidirect\Aut(\Ps)$ by any one relation
    \eqref{EqRelation}, or equivalently by all of them.
    Then 
    $$G(\Ps)\iso \pi_1^\orb(\M_\cub).$$  Furthermore, $\Art(\Ps)$
    surjects to~$G(\Ps)$, hence also to
    $\pi_1^\orb(\M_\cub)$.
\end{theorem}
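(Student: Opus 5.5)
We would prove this by passing through the classical description of cubic surfaces via six points in the plane, or more precisely via the moduli of the marked cubic surface. Consider the cover $\M_\cub^{m}\to\M_\cub$ by marked cubic surfaces, a Galois orbifold cover with group $W(E_6)$. The orbifold fundamental group is then an extension
\[
1\to\pi_1(\M^m_\cub)\to\pi_1^\orb(\M_\cub)\to W(E_6)\to 1 .
\]
Rather than work with this directly, we would use the presentation of $\pi_1^\orb(\M_\cub)$ coming from the discriminant complement. This is $\pi_1$ of the space of smooth cubic forms modulo $\GL_4$, which is computed by Libgober-type arguments as a quotient of a braid-like group. Equivalently, we would use the ball-quotient model of Allcock--Carlson--Toledo, where $\pi_1^\orb(\M_\cub)$ is the orbifold fundamental group of $(\BB^4-\Ecal)/\Gamma$, with $\Gamma\subset\PU(1,4)$ over the Eisenstein integers.

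The concrete strategy is as follows. First, define a homomorphism $\Art(\Ps)\semidirect\Aut(\Ps)\to\pi_1^\orb(\M_\cub,[X_C])$. The vertices of $\Ps$ correspond to ten mirrors through the Clebsch point (the Clebsch surface's ten Eckardt-type degenerations, permuted by $\Sfrak_5$). Send $\sigma_v$ to the meridian loop (the generator of local monodromy, an order-$6$ complex reflection in $\Gamma$) around the corresponding discriminant component near $[X_C]$. Send $\Aut(\Ps)=\Sfrak_5$ to the automorphism group of $X_C$, which is the local orbifold group. The Artin relations are verified locally: two mirrors meet at an $A_2$ or $A_1^2$ point according to adjacency, and the local fundamental group is then the braid group or $\Z^2$. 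Relation \eqref{EqRelation} should be checked by a local computation at a codimension-$4$ stratum: the product of the meridians of the hexagon, twisted by $\rho$, corresponds to a loop around a cusp or a point with extra symmetry.

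Second, to show this map is an isomorphism, use the central extension of the genus zero mapping class group promised in the abstract, in the $6$-puncture case. Cubic surfaces arise as blowups of six points on a conic only degenerately, so instead we would exploit the fibration of the ball quotient near a $6$-point configuration stratum, or the Deligne--Mostow identification. Through the Deligne--Mostow ball quotient for six points on $\PP^1$ with weights $1/3$, one gets a $\Z$-central extension of $\mathrm{Mod}_{0,6}$ embedded in $\pi_1^\orb(\M_\cub)$ as the stabilizer of a boundary stratum. Using a van Kampen decomposition of $\M_\cub$ along this stratum, one obtains a presentation. Then we would compare: show $G(\Ps)$ maps onto the group, and construct an inverse by checking that the known presentation's relations hold in $G(\Ps)$.

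Third, show that $\Art(\Ps)$ surjects onto $G(\Ps)$. Relation \eqref{EqRelation} gives $\rho=\sigma_{v_0}\cdots\sigma_{v_4}\sigma_v^{-1}$, which lies in the image of $\Art(\Ps)$. These $\rho$'s, over all choices of $v$ and orientation, are elements of order $6$ in $\Sfrak_5$, and they generate $\Sfrak_5$. The consistency checks are that the killing-squares quotient gives $W(E_6)$ and the sixth-power quotient gives $\Gamma$.

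The main obstacle is the injectivity step, namely proving that no further relations are needed. We would attack it by matching $G(\Ps)$ against an existing presentation of $\pi_1^\orb(\M_\cub)$, such as the one from the $E_6$ discriminant or the $\Z$-extension of $\mathrm{Mod}_{0,6}$, via explicit Tietze moves.
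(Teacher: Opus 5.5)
\paragraph{Verdict: the proposal has a genuine gap.}
Your last step is correct and is the paper's argument. Relation \eqref{EqRelation} gives $\rho=\sigma_{v_0}\cdots\sigma_{v_4}\sigma_v^{-1}$, and the $\rho$'s are the order-$6$ elements (type $(123)(45)$) of $\Aut(\Ps)\cong\Sfrak_5$, which generate it. The gap is the isomorphism itself.

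\paragraph{Injectivity has no mechanism.}
You only propose to match $G(\Ps)$ with an unnamed existing presentation by Tietze moves. The paper notes that even the comparison with Looijenga's $\hat E_6$-type presentation has not been carried out. A van Kampen decomposition along the boundary of a tubular neighborhood of $\Ds_\cub$ is circular, because the complement of that neighborhood is homotopy equivalent to $\M_\cub$ itself.

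The missing idea is a Lefschetz-type theorem: that boundary already carries the whole orbifold fundamental group. Since $\Ds_\cub\cup\{\infty\}$ supports an ample divisor (Lemma~\ref{lemma:ample}), on the Naruki compactification the boundary $\partial\M^+_\cub(L)$ of the real oriented blowup of the discriminant includes $2$-connectedly into $\M^+_\cub(L)\simeq\M_\cub(L)$. This needs a separate cell count at the $40$ tricuspidal walls (Theorem~\ref{thm:nearthediscriminant}).

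Then $\partial\M^+_\cub(L)$ is glued from blocks $\N_s$. Each is a circle bundle (rays in the normal bundle, identified with $\Ls^\st_{[6]}$) over the real oriented blowup of $\M^\st_{P_s}$. Here $\M^\st_{P_s}$ is the normalization of a discriminant component, given by the six lines through a node on its tangent-cone conic.

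You do reach the right local object, the $\ZZ$-central extension $G(\Ps_s)$ of the six-point spherical mapping class group. It only becomes decisive through this gluing:
\begin{itemize}
\item Surjectivity: every element is a gallery of wall-crossings, each realized in a conjugate of some $G(\Ps_s)$.
\item Injectivity: the only elementary homotopies sit at codimension-$2$ corners $\N_s\cap\N_{s'}$, and they give commutation relations already valid there.
\end{itemize}

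\paragraph{The homomorphism rests on an incorrect picture.}
$X_C$ is smooth, so no discriminant mirror passes through $[X_C]$. The ten mirrors through it are the Eckardt mirrors, which consist of smooth surfaces. Small loops around them represent the ten transpositions of $\Sfrak_5$, not Artin generators. The $\sigma_v$ are meridians of the ten discriminant walls $D_I$ ($|I|=3$) of the Sylvester polytope, joined to $[X_C]$ through the polytope. This choice of paths is what makes the assignment $\Sfrak_5$-equivariant.

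\paragraph{The relations are not local.}
$\M^\st_\cub$ has no $A_2$ points: components of $\Ds_\cub(L)$ meet only normally, and only when their reflections commute. So the braid relations come from adjacent transpositions of the six points on the conic inside one block $\N_s$; every edge of $\Ps$ lies in some hexagon $\Ps_s$.

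Likewise, \eqref{EqRelation} is the rotation relation $\sigma\equiv\zeta_6^{-1}\sigma_1\cdots\sigma_5$ of Theorem~\ref{thm:central ext}. It lives in the fundamental group of the circle bundle over an entire discriminant component, and expresses that the six points lie on $\PP^1$ rather than in $\CC$. It is not the outcome of a computation at a codimension-$4$ point.
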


In fact theorem~\ref{thm:main} exhibits a specific isomorphism
$G(\Ps)\to\pi_1^\orb(\M_\cub)$. We will see that there is precisely one conjugacy class of subgroups of $G(\Ps)$ isomorphic to $\Sfrak_5$ (see Remark \ref{rem:unique-S5}). This suggests that our presentation of this orbifold fundamental group is a very natural one.
The $\sigma_\nu$ are obviously conjugate in~$\Art(\Ps)\semidirect\Aut(\Ps)$,
so they distinguish a conjugacy class in~$\pi_1^\orb(\M_\cub)$.
Our proof shows that this  class is represented  by simple loops around the discriminant which are often referred to as \emph{meridians)}. 

We elaborate, because the discriminant plays a central role in the paper.
We recall that the cubic surfaces that are stable in the sense of geometric invariant theory are precisely  those that have  nodal singularities (=ordinary double points) at worst. Their moduli space $\M_\cub^\st$ is also an orbifold, 
and contains $\M_\cub$ as an open dense subset.
The discriminant $\Ds_\cub$ means the
complement of~$\M_\cub$, and is an irreducible normal crossing divisor in the orbifold sense.
By this irreducibility, the simple loops  around 
$\Ds_\cub$ form a single conjugacy class, which by construction contains our generators.

The normal crossing property implies that if we impose 
on $\pi_1^\orb(\M_\cub)$
a further relation by stipulating that for a given positive integer $\ell$,  the $\ell$th power of the image of $\sigma_v$ is trivial, then
there exists a ramified covering in the analytic orbifold setting:
\[
(\M^\st_\cub[\ell], \Ds_\cub[\ell])\to (\M^\st_\cub, \Ds_\cub)
\]
whose covering group is the quotient $G(\Ps)_\ell$ of $G(\Ps)$ defined by this relation.  
According to Proposition \ref{prop:squares of generators}, 
$\M^\st_\cub[2]$ is the moduli space of marked stable cubic surfaces, $\Ds_\cub[2]$ its discriminant and $G(\Ps)_2$ a Coxeter group of type~$E_6$.  
Proposition \ref{prop:cubes of generators} provides a similar interpretation for  $\ell=6$ by  linking  this with the ball quotient description of $\M^\st_\cub$ 
found by Allcock-Carlson-Toledo \cite{act:cubic_surfaces}:  
$\M^\st_\cub[6]$ is a complex ball on which 
$G(\Ps)_6$ acts as an arithmetic group. 
Our  generators acts on that ball as order~$6$ complex reflections, and 
$\Ds_\cub[6]$ is the union of their mirrors (fixed point sets).  
In Proposition~\ref{prop:G3} we identify the (finite) deck group of
$\M^\st_\cub[3]$.
This suggests:

\begin{question}
    Do the spaces $\M_\cub^\st[\ell=3,4,5]$ have
    special interpretations?
\end{question}

The idea that
$\pi_1^\orb(\M_\cub)$ is a quotient of $\Art(\Ps)$ came from
the ball quotient model of $\M_\cub^\st$.  Namely, $X_C$
corresponds to a point of the ball with stabilizer~$\Sfrak_5$, 
and the Sylvester polytope (see below) manifests as a real hyperbolic polytope
centered there.  The braid and commutation relations of~$\Art(\Ps)$ hold for
geometric reasons.
However, we establish Theorem~\ref{thm:main}
without reference to the ball model and its proof draws rather on an argument of Lefschetz type.

Our special interest in $\pi_1^\orb(\M_\cub)$ as a quotient
of~$\Art(\Ps)$ derives from its role as  an analogue of Allcock's ``monstrous proposal''
\cite{allcock:proposal}.  There, a larger ball replaces the $4$-ball, and a larger Artin group replaces $\Art(\Ps)$.  The Artin generators satisfy a relation  very much like \eqref{EqRelation} \cite{AB25}. The
monstrous proposal asserts that the analogue of
$G(\Ps)_2$ is $(M\times M)\semidirect \Z/2$, where $M$ is the
monster simple group. This proposal was also the motivation of \cite{loo23}, which among other things provided substantial evidence for  this proposal. The Lefschetz argument alluded to above was suggested by this paper.

Another presentation of the orbifold fundamental group had been given a while ago by one of us (E.L., \cite{looijenga:affartin}), based on a different construction. This  led to a different presentation, but has nevertheless  of the same flavor,  with the affine Coxeter diagram of type $\hat E_6$ taking the role of the Petersen graph. We have not formally derived one from the other, although this seems doable. After all, $\Ps$ contains  the $\hat E_6$ Coxeter diagram  as a full subgraph.

\medskip
A notable  ingredient  of our proof is a description of the 
discriminant $\Ds_\cub[2]$ of marked cubic surfaces, 
entirely in terms of a finite Coxeter group of type $E_6$ without 
any reference to cubic surfaces.  In fact,  we do better than that, for we also make such a construction  for the preimage of $\Ds_\cub$ (which we denote by $\N$) under  the real oriented blowup $\M^+_\cub\to \M^\st_\cub$ of $\Ds_\cub$ in $\M^\st_\cub$. The space $\M^\st_\cub$ is an orbifold version of a manifold with corners and so $\N$ has (at the very least) the structure of
a stratified PL-manifold.  It is not a surprise that the  inclusion  $\M_\cub\subset \M^+_\cub$ is a homotopy equivalence in the orbifold setting and we establish early on that the inclusion $\N=\p \M^+_\cub \subset \M^+_\cub$ induces isomorphisms on orbifold fundamental groups. This reduces our task to finding  a presentation of the orbifold fundamental group of the more concrete $\N$. 

Since  $\N$  arises as the boundary of a  manifold with corners, it is naturally stratified. It has only one open stratum $\mathring \N$ and  this stratum  comes as a  circle bundle over an orbifold familiar to algebraic geometers, namely 
$\Sfrak_6\bs \M_{0,6}$: the space of  $\PGL_2(\CC)$-orbits of $6$-element subsets of $\PP^1$.
Hence the orbifold fundamental group of $\mathring \N$ is a central extension by $\ZZ$ of the mapping class group of a pair $(\PP^1,A)$ with $A\subset \PP^1$ a $6$-element subset a  (this is almost a spherical braid group on six strands). The  orbifold fundamental group of $\N$ is a quotient of this central extension, due to identifications on the boundary.

This approach, concrete as  it may be, puts the base point in $\mathring \N$. This is quite different from 
$[X_C]$ and we thus seem to lose the desired $\aut(\Ps)\cong \Sfrak_5$-symmetry. Let us briefly explain here how 
we restore  this (where we change the details a bit for the sake of brevity).  

This involves a $4$-dimensional polytope $\Pi$, which we call for good reasons,  the \emph{Sylvester polytope}. 
Its  barycenter  is the  unique fixed point of  $\aut(\Pi)$. It  is also a manifold with corners except at a single  $\aut(\Pi)$-orbit of vertices,  denoted  $\Pi_\infty$.  Furthermore, it comes with an identification of $\aut(\Pi)$ with $\aut(\Ps)$.  
We then define a proper  embedding 
$\aut(\Ps)\bs \mathring \Pi$ in $\M_\cub$  with the barycenter mapping  to $[X_C]$. This map extends continuously  to 
\[
\aut(\Ps)\bs (\Pi\ssm \Pi_\infty)\to \M^+_\cub
\]
with $\aut(\Ps)\bs (\p\Pi\ssm \Pi_\infty)$ mapping to $\p\M^+_\cub=\N$. The space $\p\Pi\ssm \Pi_\infty$ is simply connected (it is a $3$-sphere minus a finite set)
% could also fix by replacing 3 by 2:
%(it has the homotopy type of a wedge of $3$-spheres)
and comes with a $\aut(\Ps)$-action. We then let   the orbifold map $\aut(\Ps)\bs \p\Pi\ssm \Pi_\infty\to \N$ take the role of base point and this brings in the desired $\aut(\Ps)$-symmetry.

\smallskip

A glance at the table of contents should make it clear how  this  paper is organized. We therefore only mention here that the Appendix stands on its own: it is independent of the rest of the paper and might be of interest to another  audience as well, as it is about the spherical  braid groups and related matters. We recover Birman's presentation in a way that suits our purpose best (it has more  of an algebro-geometric flavor and is also brief).  We introduce an infinite cyclic central extension of such groups (geometrically defined by an orbifold line bundle) and give  a presentation for them. Only the special case of $6$ strands  matters (but it also crucial) for this paper.

This paper contains no AI-generated text or mathematics.
\tableofcontents

\section{The  stratification of the moduli space of cubic surfaces}

\subsection{Marked cubic surfaces}
 If $X$ is a smooth cubic surface, then $H_2(X)$  is a unimodular odd  lattice  of signature $(1,6)$ and its hyperplane class $\kappa_X\in H_2(X)$ (which is also the 
 anticanonical
 class) satisfies $\kappa_X\cdot\kappa_X=3$.
This prompts us to fix  a unimodular odd  lattice $L$ of signature $(1,6)$ and a vector $\kappa\in L$ with $\kappa\cdot\kappa=3$. It is known that the pair  $(L, \kappa)$ is unique up to isomorphism. Indeed, there exists an orthogonal  basis $\ell, e_1, \dots, e_6$ of $L$ with  $\ell\cdot \ell =1$ and $e_i\cdot e_i=-1$  and 
we can then take $\kappa=3\ell-e_1-\cdots  -e_6$.  So this pair is isomorphic to  $(H_2(X), \kappa_X)$. The choice of such an isomorphism is called  an \emph{$L$-marking} of $X$.

We review of a few properties of the pair $(L, \kappa)$ (see for example \cite{sss} and \cite{dolgachev:classical}). 
For each $\alpha\in L$ perpendicular to $\kappa$ with $\alpha\cdot \alpha=-2$, the orthogonal reflection in $\alpha$ is given by $s_\alpha: x\in L\mapsto x+ (\alpha\cdot x)\alpha$ and hence takes $L$ to itself. Therefore, such $\alpha$ make up  a root system $\Rscr$ whose Weyl group $W$ is the subgroup of
$\Orth(L)$ generated by the reflections $s_\alpha$. It is of type $E_6$: in terms of the $\ell, e_1, \dots, e_6$, 
a root basis for $\Rscr$ is $(\ell-e_1-e_2-e_3, e_1-e_2,\dots , e_5-e_6)$. The positive roots for this basis are $\{e_i-e_j\}_{1\le i<j\le 6}$ ($15$ of those), $\{\ell-e_i-e_j-e_k\}_{1\le i<j<k\le 6}$ ($20$) and
$2\ell-(e_1+\cdots e_6)$.

We write $\Escr\subset L$ for the set $e\in L$  satisfying both $e\cdot e=-1$ and $e\cdot \kappa=1$ (the ``exceptional classes''). 
In terms of the above basis,  this set consists of the $e_i$ (6), $\kappa+e_i-\ell$ (6) and $\ell-e_i-e_j$ with $i\not=j$ (15). An $L$-marking of a cubic surface $X$ takes $e\in L$ to the class of a line if and only if $e\in \Escr$.  The lines  are also exceptional curves of the first kind. 

In particular, the images of $e_1, \cdots ,e_6$ are uniquely represented by six pairwise disjoint lines on $X$. Furthermore,
when contracted they produce a copy  of a projective plane, thus identifying  $X$ with the blowup of that projective  plane in 6 points in general position (no three on a line, not all on a conic) and $\ell$  with the class of a line in this plane.
%the pull-back of the line class of this plane. 

Conversely,  the blowup of six numbered points in $\PP^2$ in general position produces a marked cubic surface: the  linear system of cubic curves passing though these 6 points is of dimension $3$ and realizes the blowup as a cubic surface. The construction produces the marking.

The   Weyl group $W$ is  also the stabilizer of $\kappa$ in the orthogonal group of $L$ and so the $L$-markings of a smooth cubic surface are simply transitively permuted by $W$. 
Both $\Rscr$ and $\Escr$ are  $W$-orbits.  The $W$-stabilizer of an element of $\Rscr$ resp.\ $\Escr$ is  a reflection subgroup of type $A_5$ resp.\ $D_5$.

\subsection{Marked stable cubic surfaces}\label{subsect:marked stable} 
Recall that a  cubic surface in $\PP^3$ is stable (relative to the action of $\SL_4(\CC)$) if and only if each of its singular points is a node  (an $A_1$-singularity). The maximum number of such points  is four and when this happens, the surface is isomorphic  with  the Cayley nodal surface (defined by $z_0z_1z_2+z_1z_2z_3+z_2z_3z_0 +z_3z_0z_1$).
Let $\M^\st_\cub$  stand for the  moduli space of stable cubic surfaces given up to a projective equivalence,  with $\M_\cub\subset \M^\st_\cub$ parametrizing the smooth ones. The variety  $\M^\st_\cub$  is defined over $\RR$ (even over $\QQ$, but this will not matter in what follows) with  $\M^\st_\cub(\RR)$ parametrizing the stable cubic surfaces that can be defined over $\RR$. It also has the structure of an $4$-dimensional orbifold: it has only quotient singularities.  The \emph{discriminant}  in $\M^\st_\cub$ is by definition  $\Ds_\cub:= \M^\st_\cub\ssm \M_\cub$ and is a hypersurface of $\M_\cub$ defined over $\RR$.

Let $\M_\cub(L)$ stand for the  moduli space of  $L$-marked smooth cubic surfaces given up to projective equivalence (in an evident sense). Concretely,  we can identify $\M_\cub(L)$ with the set of pairs $(p_5,p_6)\in \PP^2\times\PP^2$ which together  with the  standard projective basis elements $p_1=[1:0:0]$, $p_2=[0:1:0]$, $p_3=[0:0:1]$, $p_4=[1:1:1]$ form a $6$-tuple in general position. This is a Zariski open subset of $\PP^2\times\PP^2$. The variety $\M_\cub(L)$ and this identification are defined over $\RR$. Let
\[
\M_\cub(L)\subset \M^\st_\cub(L)
\]
be  the normalization of $\M^\st_\cub$ in $\M_\cub(L)$. This normalization is also defined over $\RR$.
A point of $\M^\st_\cub(L)$ has still a geometric  interpretation: it is representable by a stable cubic surface $X$ plus a surjection $L\twoheadrightarrow  H_2(X)$ which factors as 
\begin{equation}\label{eqn:marking}
L\xrightarrow{\cong} H_2(\tilde X)\xrightarrow{\pi_*} H_2(X),
\end{equation}
where $\pi: \tilde X\to X$ is the resolution of singularities obtained by a simple blowup of each singular point.
Since we will encounter this situation later in the paper,  it  is worth recalling here what happens with $\Escr$.
This has been analyzed by Schl\"afli and Cayley in the 19th century and  a modern treatment can be found in  Bruce-Wall \cite{BW79}.  What follows is a brief review of this, adapted to our needs.

If  $X$ has a single node $x$, then this determines  a reflection $s\in W$ which can be understood as  the monodromy  transformation around $X$ as acting on the homology of a nearby smooth cubic surface.
The marking \eqref{eqn:marking} also factors as $L\twoheadrightarrow L_s\cong H_2(X)$, where the first map forms the lattice  of $s$-coinvariants. The kernel of the marking is spanned by a root $\alpha\in\Rscr$, which is then determined up to a sign and we have  $s=s_\alpha$. Such a root is the  called the associated \emph{vanishing cycle}. The factorization \eqref{eqn:marking} is unique  up to precomposition with $s$, but once it is given we  can take $\alpha$ to be the class of the exceptional divisor (a copy of $\PP^1$ with self-intersection $-2$).
\smallskip

The reflection $s$ fixes precisely $15$ elements of $\Escr$ and these define as many distinct lines in $X$, none of which passes through $x$. The $12$ remaining ones, denoted $\Escr_s$,  come in $6$ pairs exchanged by $s$. Each pair determines a line  on $X$ through $x$ and an isomorphism $L\cong H_2(\tilde X)$ as above takes this pair to the classes of the strict and the total transform of  this line (the two are indeed exchanged by $s$).

There are no other lines on $X$ and so the image of $\Escr$ in $L_s$ still labels the lines in $X$ in a bijective manner (\footnote{This is why we prefer to mark the homology rather than the cohomology.}).  
This is geometrically realized as follows.
Denote  projectivized tangent space  of the ambient $\PP^3$ at $x$ by $\PP_x$ (so that is a projective plane). If $x\in X$ is a node, then  the projectivized tangent cone of $X$ at $x$ is a plane conic $C_x\subset\PP_x$  and the six lines on $X$ through $x$ define a six-element subset $D_x$ of $C_x$. We regard this as a positive degree $6$ divisor on $C_x$.

We now turn to the case of an arbitrary  stable cubic surface $X$.  We then  $D_x$ let be the positive divisor on $C_x$ for which a point of $C_x$ has multiplicity $\mu$ if it defines a line on $X$ through $x$ which contains exactly  $\mu$ nodes ($x$ included).

For two distinct nodes $x$ and $x'$ on $X$ the line $\overline{xx'}$  spanned by them  meets $X$ in two distinct points 
with multiplicity $\ge 2$ and hence must be contained  in $X$. It thus  defines a point $p_{x'}\in C_x$. Let $\Pcal$ be the pencil of planes in the ambient $\PP^3$ through $\overline{xx'}$. 
This becomes in the projective plane $\PP_x$  the pencil of lines through $p_{x'}$. Since $p_{x'}\in C_x$, 
this identifies   $\Pcal$ with $C_x$.
Similarly $\Pcal\cong C_{x'}$ and so we obtained  an  isomorphism  $C_x\cong C_{x'}$.

\begin{lemma}\label{lemma:linesonstablecubic}
The divisor $D_x$ has multiplicities $\le 2$ and the isomorphism $C_x\cong C_{x'}$ identifies $D_x$ with $D_{x'}$. In particular, the  isomorphism type of the pair $(C_x, D_x)$ is independent of the node.

If $X$ has been $L$-marked, then the  reflections in $L$ associated with  $x$ and $y$ commute.
\end{lemma}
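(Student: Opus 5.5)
Multiplicities of $D_x$ come from lines through $x$, and there are two things to prove: that such a line carries at most two nodes, and that the divisors $D_x$, $D_{x'}$ match under the pencil isomorphism. After that comes the lattice statement.

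\emph{Multiplicity bound.} A line $\ell\subset X$ through $x$ is in $X$, so $X$ is singular along $\ell$ only at finitely many points. Restrict the partials of the cubic form $F$ to $\ell$. Choose coordinates with $\ell=\{z_2=z_3=0\}$ and write $F=z_2Q_2+z_3Q_3$ with $Q_2,Q_3$ quadrics. Singular points of $X$ on $\ell$ are then the common zeros on $\ell$ of the binary quadrics $Q_2|_\ell$ and $Q_3|_\ell$. These are not proportional, since otherwise $X$ would be singular along a whole point of a pencil-determined curve, contradicting isolated singularities (one checks that proportionality forces a non-isolated singularity or a non-normal surface). So there are at most $2$ common zeros, and a line contains at most two nodes. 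This gives multiplicities $\le 2$.

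\emph{Matching $D_x$ with $D_{x'}$.} Let $\ell=\overline{xx'}$. A plane $P\in\Pcal$ cuts $X$ in $\ell+Q_P$ with $Q_P$ a conic. For $P\ne$ a finitely many exceptions, $Q_P$ passes through $x$ and $x'$ (since $P\cap X$ is singular at the nodes). The lines of $X$ through $x$ other than $\ell$ are exactly the line components of those residual conics $Q_P$ that are reducible and contain a line through $x$.

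Work on the general-position model. Project from $x$: $X$ is birational to $\PP_x$, the lines through $x$ are contracted to the points of $D_x$ on $C_x$, and $P$ corresponds to the line in $\PP_x$ through $p_{x'}$. A degenerate conic $Q_P=m+m'$ with $x\in m$, $x'\in m'$ gives a point of $D_x$ (direction of $m$) and a point of $D_{x'}$ (direction of $m'$), both corresponding to the same $P$. If $Q_P$ is a double line or a line through both nodes, the multiplicity bookkeeping matches by counting nodes on the lines. The point $p_{x'}$ itself (the line $\ell$, multiplicity $2$) corresponds on $C_{x'}$ to the tangent plane direction, i.e.\ to $p_x$, and both carry multiplicity $2$. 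Counting degrees ($\deg D_x=6$ on both sides) then closes the argument. I expect this careful multiplicity matching in the degenerate cases to be the main obstacle. I would handle it by a local computation in coordinates $x=[1:0:0:0]$, $x'=[0:1:0:0]$, where $F=z_0z_1 L + z_0 q_0 + z_1 q_1 + c$ with $L$, $q_i$, $c$ in $z_2,z_3$ of degrees $1,2,2,3$. Then $D_x$ and $D_{x'}$ are the zero divisors on $\PP^1_{z_2:z_3}$ of $L\,c-q_0q_1$-type sextics, symmetric in the roles of $x,x'$, up to the factor $L^2$ accounting for $\ell$. This formula makes the identification manifest.

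\emph{Commuting reflections.} Take a small smoothing and the vanishing cycles $\alpha,\beta$ of $x,y$. These are classes of the disjoint exceptional curves $E_x,E_y$ in $\tilde X$ under the marking~\eqref{eqn:marking}, so $\alpha\cdot\beta=0$. Hence $s_\alpha s_\beta=s_\beta s_\alpha$.
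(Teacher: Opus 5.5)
You prove the commuting-reflections part exactly as the paper does. The first part, however, has two genuine problems.

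\textbf{The multiplicity bound.} Your claim that $Q_2|_\ell$ and $Q_3|_\ell$ are not proportional is false exactly in the case that matters. For $\ell=\overline{xx'}$, in your own normal form $F=z_0z_1L+z_0q_0+z_1q_1+c$ with $L=az_2+bz_3$, they are $az_0z_1$ and $bz_0z_1$. Moreover, two non-proportional binary quadrics have at most one common zero, so your claim would rule out lines through two nodes altogether.

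What isolated singularities actually give is weaker: the two restrictions are not both identically zero. Then every singular point on $\ell$ is a zero of some nonzero binary quadric, so there are at most two. With that repair your argument is a valid alternative to the paper's. The paper uses Bertini instead: a general residual conic $Q_P$ is smooth and passes through $x$ and $x'$, hence meets $\overline{xx'}$ only there.

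\textbf{The matching of $D_x$ with $D_{x'}$.} This is not established for the divisor defined in the lemma. Your formula is correct: on $\Pcal\cong\PP^1_{[\lambda:\mu]}$ both sides become $\mathrm{div}\bigl(L^2(Lc-q_0q_1)\bigr)$, and $Lc-q_0q_1=-q_0q_1\neq 0$ at the root of $L$ by nondegeneracy of both tangent cones. But this is the divisor cut out on $C_x$ by the cubic term $f_3$. The lemma's $D_x$ instead weights each line through $x$ by its number of nodes. You never show that a double root of $Lc-q_0q_1$ comes from a line carrying a second node.

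The degree count cannot supply this, for two reasons. First, $\deg D_x=6$ for the node-count divisor is equivalent to the point at issue. Second, even granting it, equal total degrees over bijective supports do not force equal multiplicities point by point.

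The missing idea is the paper's. Suppose $Q_P=m+m'$ with $x\in m$, $x'\in m'$, and $x''\neq x$ is a node on $m$. Then $\overline{x'x''}$ lies on $X$ (it meets $X$ with multiplicity $\ge 4$) and in $P$. It is neither $\ell$ nor $m$, so it equals $m'$. Hence $m$ and $m'$ carry the same number of nodes, and no degree count is needed. Alternatively, keep your computation and add a local check: $f_3|_{C_x}$ vanishes to order $2$ at the direction of a line $m\ni x$ if and only if $m$ contains a second singular point.

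\textbf{Minor points.}
\begin{itemize}
\item $Q_P$ passes through $x$ and $x'$ for every $P\in\Pcal$, not just all but finitely many.
\item $Q_P$ is never a double line.
\item The only degenerate member not of the form $m+m'$ is the plane $L=0$. There $P\cap X=2\ell+m_0$ with $m_0$ through neither node, which is your $p_{x'}\leftrightarrow p_x$ case.
\end{itemize}
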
 
\begin{proof}
For  $P\in \Pcal$, its  intersection with  $X$ is a cubic divisor which has $\overline{xx'}$ as a component and hence the residual divisor $Q_P$ is a  quadric.  By Bertini's theorem,  $Q_P$ is  smooth for general $P$.  It must  pass through $x$ and $x'$ (for these points have multiplicity $>1$ on $P\cap X$)  and so  $Q_P\cap \overline{xx'}=\{x,x'\}$.  In particular, there cannot  be  third node on $\overline{xx'}$.

For $P\in\Pcal$ arbitrary,  $Q_P$  still  passes through $x$ and $x'$, but cannot 
be $\ell$ with multiplicity $2$, for then $P$ would meet $X$ with multiplicity $3$ and then $X$ would  be singular along $\ell$. So if $Q_P$  is singular, then it is of the form $\ell_x+\ell_{x'}$ with  $\ell_x$, $\ell_{x'}$ distinct lines on $X$ with   $x\in \ell_x$ and $x'\in \ell_{x'}$. If $x''$ is a node on $\ell_x$ different from $x$, then
the line through $x''$ and $x'$ lies on $X$ and hence must be $\ell_{x'}$. 
This proves that the isomorphism $C_x\cong C_{x'}$ identifies  $D_x$ with $D_{x'}$.

The last clause follows from the fact that the vanishing cycles associated with $x$ and $y$ are disjoint and are therefore perpendicular with respect to the intersection pairing.
\end{proof}

We will later see that $(C_x, D_x)$ is a complete invariant of the pair $(X,x)$. So the automorphism group of $X$ acts transitively on the set of its nodes.

\begin{remark}%\label{rem:}
The number  lines on a stable cubic surface still sums up to  $27$, provided  a line containing one node resp.\ two nodes  is given multiplicity $2$ resp. $4$.  For example, for the Cayley cubic surface,  its four singular points make up a complex tetrahedron whose six edges give as many  lines on it.  The six  lines that normally pass through a  node have now coalesced into three pairs, and hence we are left with $27-6.4=3$ lines lying on the smooth part (they  connect opposite sides of the tetrahedron).    
\end{remark}

It is also known that the discriminant
\[
\Ds_\cub(L) := \M_\cub^\st(L)\ssm \M_\cub(L)
\]  
is a simple normal crossing divisor. 
The  reflections in $W$ bijectively label the  irreducible components  of $\Ds_\cub(L)$ by 
assigning  to a reflection in $W$ the codimension  one part of its  fixed point set in $\Ds_\cub(L)$.
A point  of $\Ds_\cub(L)$ where four irreducible components of $\Ds_\cub(L)$  meet represents the 
Cayley cubic surface endowed with a marking in the above sense.

\subsection{A real oriented blowup} We here leave the algebraic category in the strict sense and enter 
the logarithmic category.  We recall the \emph{real oriented blowup} $M^+\twoheadrightarrow  M$ of a normal crossing divisor $D$ in  
a complex manifold $M$.
If $(U; z_1, \dots, z_n)$ is a holomorphic chart of  $M$ such that $D\cap U$ is given by $z_1z_2\cdots z_k=0$,
then an open subset $U^+$ of $M^+$ is
parametrized by an open subset of $ (\RR_{\ge 0}\times S^1)^k\times\CC^{n-k}$ such that the projection $U^+\twoheadrightarrow U$ is given by
$(r_1, u_1,\dots , r_k, u_k, z_{k+1}, \dots, z_n)\mapsto(r_1u_1, \dots, r_ku_k,  z_{k+1}, \dots, z_n)$. Note that $M^+$ is a manifold with corners, having $M\ssm D$ as its interior.

Over the locus where $D$ has exactly $k$ local branches, this 
is a fiber bundle with fiber a real $k$-torus. Its total space is where $M^+$ has a  codimension-$k$ 
corner. So the real codimension 1 strata (also called the \emph{walls}) of $M^+$  are indexed by the irreducible components of $D$. The  inclusion  $M\ssm D\subset  M^+$, is a homotopy equivalence, for the two have a  common deformation retract.  
\smallskip

So the normal crossing divisor $\Ds_\cub(L)$ in $\M^\st_\cub(L)$ gives a map 
\[
\M_\cub^+(L)\to \M^\st_\cub(L)
\]
and $\M_\cub^+(L)$ adds to $\M_\cub(L)$ a boundary  $\p\M_\cub(L)$ with corners. The $W$-action on $\M^\st_\cub(L)$ lifts 
to $\M_\cub^+(L)$.  By the above, it comes with a proper $W$-equivariant map 
$\p\M_\cub^+(L)\to \Ds_\cub(L)$ which over each stratum of complex codimension $k$ is a real 
$k$-torus bundle. 

Let $\p\M_\cub^+\subset \M_\cub^+$ be obtained from $\p\M_\cub^+(L)\subset \M_\cub^+(L)$ by passing to $W$-orbit  spaces. We  regard this as an embedding in the category of topological orbifolds. The main result of this section is:

\begin{theorem}\label{thm:nearthediscriminant}
The  $W$-equivariant  inclusion $\p\M_\cub^+(L)\subset \M_\cub^+(L)$ is $2$-con-nected and hence  the inclusions
\[
\p\M_\cub^+(L)\subset \M_\cub^+(L)\supset \M_\cub(L), \quad  \text{resp.}\quad  \p\M_\cub^+ \subset  \M_\cub^+\supset \M_\cub
\] 
induce isomorphisms on fundamental resp.\  orbifold  fundamental groups. 
\end{theorem}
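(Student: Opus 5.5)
The plan is to deduce the statement from a Morse-theoretic (Andreotti--Frankel / Lefschetz type) argument on the affine variety $\M_\cub(L)$. Everything is done $W$-equivariantly on the marked level, where $\M^\st_\cub(L)$ is a genuine complex manifold and $\Ds_\cub(L)$ a simple normal crossing divisor. The orbifold assertion then follows formally: a $W$-equivariant $2$-connected map induces a $2$-connected map of Borel constructions $EW\times_W(-)$. Orbifold fundamental groups are the fundamental groups of these Borel constructions, so they agree. Also, $\M_\cub(L)\subset\M^+_\cub(L)$ is a homotopy equivalence, as recalled above. So the whole content is that the pair $(\M^+_\cub(L),\p\M^+_\cub(L))$ is $2$-connected. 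I will in fact aim for $3$-connected.

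First step: affineness and an exhaustion. Recall that the GIT quotient $\overline\M_\cub$ of all semistable cubics is projective, and that $\M^\st_\cub$ is obtained from it by removing the single strictly semistable point $c$ (the $3A_2$ cubic). The discriminant closure $\overline{\Ds}_\cub=\Ds_\cub\cup\{c\}$ is the zero locus of the discriminant invariant $F$, which is ample on $\overline\M_\cub$. Hence $\M_\cub=\overline\M_\cub\ssm\overline\Ds_\cub$ is affine, and so is its finite cover $\M_\cub(L)$. I take the function $\psi=-\log(|F|^2/\|x\|^{2d})$, where $\|x\|^{2d}$ is a degree-matching Hermitian norm (or any $K$-invariant norm expressed in invariants), and pull it back to $\M_\cub(L)$. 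This $\psi$ is a proper strictly plurisubharmonic exhaustion: $-\log|F|^2$ is pluriharmonic away from $F=0$, and the log-norm term is strictly psh. Moreover $\psi\to\infty$ both near $\Ds_\cub(L)$ and near the preimage of $c$. After a $W$-invariant small perturbation, $\psi$ is Morse, and by the Andreotti--Frankel argument all its critical points have index $\le 4=\dim_\CC$.

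Second step: relate the end of $\psi$ to $\p\M^+_\cub(L)$. Away from the preimages of $c$, near a point of a codimension-$k$ stratum of $\Ds_\cub(L)$, in local coordinates we have $\psi=-\sum_{i\le k}\log|z_i|^2+(\text{smooth})$. So the level sets $\{\psi=t\}$ for $t\gg0$ are, over that region, isotopic to the corresponding portion of $\p\M^+_\cub(L)$, via the radial coordinates $r_i$ of the real oriented blowup. Granting that this extends over the whole end, $\M^+_\cub(L)$ is obtained from a collar of $\p\M^+_\cub(L)$ by attaching handles of $-\psi$. These are dual to those of $\psi$, so they have index $8-\mathrm{ind}\ge 4$. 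Hence $\pi_j(\M^+_\cub(L),\p\M^+_\cub(L))=0$ for $j\le 3$.

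The main obstacle is the neighborhood of the removed point $c$. There the end of $\M_\cub(L)$ is not fibered over $\Ds_\cub(L)$ in the above manner, because $c$ is not in $\M^\st_\cub$ and so contributes no boundary face of $\M^+_\cub(L)$. What I would try first is a local conical analysis. Take a small ball $B$ around $c$ in $\overline\M_\cub$ (or around its preimages in the normalization). By the local conic structure theorem, $(B,B\cap\overline\Ds_\cub)$ is a cone over its link pair. This lets one push the part of each large level set of $\psi$ lying near $c$ onto the part of $\p\M^+_\cub(L)$ lying over $\p B\cap\Ds_\cub(L)$ times a radial direction, without changing the relative connectivity. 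Concretely, I would modify $\psi$ inside $B$ to $\max(\psi,\,\lambda\log\|y\|^{-2})$, smoothed, with $y$ local coordinates at $c$. This remains psh, so the index bound survives, and its sublevel complement near $c$ collapses radially. If this local modification works, the two ends glue and the argument above goes through. If not, I would fall back on showing directly that a punctured neighborhood of $c$ in $\M^+_\cub(L)$ retracts onto its intersection with $\p\M^+_\cub(L)$ modulo cells of dimension $\ge 3$.
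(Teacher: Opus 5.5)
Your Andreotti--Frankel/Lefschetz step is the paper's first step away from the strictly semistable point, but at that point $c$ your proposal has a genuine gap: you leave it conditional, and the repair you sketch does not work. First, $\lambda\log\|y\|^{-2}=-\lambda\log\|y\|^{2}$ is plurisuperharmonic, not plurisubharmonic. So $\max(\psi,\lambda\log\|y\|^{-2})$ is not psh where the second term dominates, and the index bound does not survive.

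More seriously, the topological claim is false: you cannot push the part of a large level set near $c$ radially onto the part of $\p\M^+_\cub(L)$ over $\Ds_\cub(L)$ ``without changing the relative connectivity''. On the marked level each of the $40$ points over $c$ is, by Naruki's description, the vertex of the cone over $(\PP^1)^3$ with respect to $\Ocal(1,1,1)$. Its link is the circle bundle $E\to(\PP^1)^3$ of $\Ocal(-1,-1,-1)$, and the discriminant meets it in $E_D$, the preimage of the nine divisors $D$ where one coordinate lies in $\{0,1,\infty\}$. Your radial collapse would retract $E$ minus a tube around $E_D$ onto the boundary of that tube. But Künneth and the Gysin sequence give
$$H_3(E,E_D)\cong H_3((\PP^1)^3,D)\cong H_1(\PP^1,\{0,1,\infty\})^{\otimes 3}\cong\ZZ^8\neq 0.$$
So this step necessarily loses a degree of connectivity locally, and your $3$-connectedness target cannot come out of a local argument at $c$. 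The local conic structure theorem also does not tell you that the link pair $(E,E_D)$ is even $2$-connected, and that is the input actually needed.

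The paper supplies this input in two steps. It runs the Lefschetz argument on Naruki's smooth compactification $\widehat\M_\cub(L)$, where the total transform $\widehat\Ds_\cub(L)$ of $\Ds_\cub\cup\{\infty\}$ is a normal crossing divisor supporting an ample divisor. This gives $3$-connectedness of $(\widehat\M^+_\cub(L),\p\widehat\M^+_\cub(L))$ with no special behaviour anywhere.

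It then compares $\p\widehat\M^+_\cub(L)$ with $\overline{\p\M^+_\cub(L)}$, whose difference is $40$ copies of $E\ssm E_D$. Give $\PP^1$ the cell structure with $0$-cells $0,1,\infty$, the three real intervals as $1$-cells and the two hemispheres as $2$-cells. Then $E\ssm E_D$ is a union of cells of dimension $\ge 3$, so that pair is $2$-connected. The triple then gives $2$-connectedness of $(\M^+_\cub(L),\p\M^+_\cub(L))$, which is exactly what the theorem claims. Your fallback (``retract modulo cells of dimension $\ge 3$'') points at precisely this step, but without the explicit structure of the link it remains unproved.
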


The  rest of this section is devoted to the proof.

\subsection{Marked semistable cubic surfaces} The basic reference is \cite{naruki}.
A cubic surface is  semistable if and only if each of its singular points is either a node (an $A_1$-singularity) or a cusp 
(an $A_2$-singularity). There is only one strictly stable closed orbit and this is the orbit  represented by  the \emph{tricuspidal cubic surface} $X_\infty$ defined by $z_3^3=z_0z_1z_2$ (thus named because its singular set set  consists of  three cusps, namely  at $[1:0:0:0]$, $[0:1:0:0]$ and $[0:0:1:0]$). 
So the  compactification $\M^\sst_\cub$ of $\M^\st_\cub$ provided by geometric invariant theory is a normal projective one-point compactification of  $\M^\st_\cub$. We therefore denote the added point by $\infty$. We shall see that $\M^\sst_\cub$ is highly singular there.  

The following observation will be crucial for the proof of Theorem \ref{thm:nearthediscriminant}. 

\begin{lemma}\label{lemma:ample}
The  discriminant locus $\Ds_\cub^\sst:=\Ds_\cub\cup\{\infty\}$ supports  an ample divisor.
\end{lemma}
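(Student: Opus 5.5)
The plan is to realize $\M^\sst_\cub$ as $\mathrm{Proj}$ of the ring of $\SL_4(\CC)$-invariants of cubic forms and to exhibit a single invariant whose zero locus is exactly $\Ds_\cub^\sst$. The natural candidate is the discriminant $\Delta$ of quaternary cubic forms. It is a homogeneous $\SL_4(\CC)$-invariant polynomial of degree $32$ in the coefficients, and it vanishes precisely on the singular cubics. By Salmon--Clebsch, the invariant ring is generated by invariants $I_8,I_{16},I_{24},I_{32},I_{40},I_{100}$, and $\Delta$ is a polynomial in these. Let $d$ be a positive integer such that the degree-$d$ part of the invariant ring generates a very ample $\Ocal(1)$; taking $d$ to be a multiple of $32$ is harmless. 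Then $\Delta^{d/32}$ is a section of $\Ocal(1)$, and its zero divisor is an effective ample Cartier divisor on $\M^\sst_\cub$.

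The next step is to identify the support of this divisor. A point of $\M^\sst_\cub$ corresponds to a closed semistable orbit. Such an orbit is either a stable cubic surface, whose image lies in $\M_\cub$ exactly when the surface is smooth, or the orbit of the tricuspidal surface $X_\infty$. On a stable orbit, $\Delta$ vanishes if and only if the surface is singular, that is, if and only if the point lies in $\Ds_\cub$. At $\infty$ the representative $X_\infty$ is singular, so $\Delta(X_\infty)=0$ and $\infty$ lies in the zero set. Since $\Delta$ is constant on orbits and continuous, vanishing at the closed orbit agrees with vanishing on all semistable orbits whose closure contains it; this is consistent, because every such orbit is strictly semistable and hence has a cusp, so it is singular. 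The zero set of $\Delta$ on $\M^\sst_\cub$ is therefore exactly $\Ds_\cub\cup\{\infty\}=\Ds_\cub^\sst$.

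It follows that the ample divisor $\mathrm{div}(\Delta^{d/32})$ has support $\Ds_\cub^\sst$, which proves the lemma. The only delicate point is the bookkeeping at $\infty$, where $\M^\sst_\cub$ is badly singular. Working with $\mathrm{Proj}$ and sections of $\Ocal(d)$ avoids this problem: the statement concerns the vanishing locus of a section of an ample line bundle, so no Cartier or $\QQ$-Cartier issue arises at the singular point. One caveat is that the argument needs $\Delta$ to be an honest invariant rather than merely a semi-invariant. This holds because $\SL_4$ has no nontrivial characters.
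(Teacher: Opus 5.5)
Your proof is correct and takes the same route as the paper, whose one-line argument is that a positive multiple of $\Ds_\cub^\sst$ is cut out by an $\SL_4(\CC)$-invariant, namely the degree-$32$ discriminant, and is therefore the zero divisor of a section of some $\Ocal(d)$ on the GIT quotient. You supply the details the paper leaves implicit: passing to a Veronese subring so that $\Ocal(d)$ is very ample, and checking via closed orbits that the zero set of $\Delta$ on $\M^\sst_\cub$ is exactly $\Ds_\cub\cup\{\infty\}$.
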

\begin{proof}
This follows   from the fact that  a positive multiple of the discriminant locus 
$\Ds_\cub^\sst:=\Ds_\cub\cup\{\infty\}$ can be defined by an  $\SL_4(\CC)$-invariant.
\end{proof}

We extend the $W$-cover $\M^\st_\cub(L)\to\M^\st_\cub$ to a $W$-cover 
 $\M^\sst_\cub(L)\to \M^\sst_\cub$ by taking the normalization of $\M^\sst_\cub$ in 
 $\M_\cub(L)$. A marked  tricuspidal cubic (which is by definition a point over $\infty$) then determines 
 a reflection subgroup of $W$ of type $3A_2$. For what follows, it is worth noting that such reflection subgroups make up a single conjugacy class in $W$ and that the 
 $W$-normalizer of such a reflection subgroup is able to permute the three factors in an arbitrary 
 manner and hence is isomorphic with
$\Sfrak_3\wr\Sfrak_3$. In particular, the $W$-orbit over $\infty$ has size $|W|/6^4=40$.
A construction due to  Naruki \cite{naruki} resolves its singularities in such a manner that the total transform of the discriminant locus  becomes a normal crossing  divisor. We recall this below.

\subsection{The Naruki compactification} The basic reference is \cite{naruki}.
This is a modification of the GIT compactification $\M^\sst_\cub$ 
obtained as a blowup of $\infty$ and so we focus on the the tricuspical surface $X_\infty$ defined by $z_3^3=z_0z_1z_2$.
First note that the algebraic 2-torus  $T_2$ consisting of  $t=(t_0, t_1, t_2)\in (\CC^\times)^3$ with 
$t_0t_1t_2=1$ appears as the identity component of the automorphism group of $X_\infty$: it takes  $[z_0:z_1 :z_2:z_3]$ to
$[t_0^{-1}z_0:t_1^{-1}z_1:t_2^{-1}z_2:z_3]$.  Indeed, $\aut(X_\infty)=T_2\rtimes \Sfrak_3$, where $\Sfrak_3$ permutes the last three coordinates. 
%The locus $\M^\sst_\cub$ is very singular at $\infty$.  We can improve the situation \`a la Kirwan by first blowing up the $\PGL_3(\CC)$-orbit of  $z_0z_1z_2=z_3^3$vand then do geometric invariant theory for the blowup. This amounts to the following:
 
 A transversal slice  at $z_3^3=z_0z_1z_2$ to  its $\PGL_4(\CC)$-orbit  in the projective space of cubic divisors on $\PP^3$ is given  by the  family
\[
\textstyle z_0z_1z_2 =z_3^3+\sum_{i=0}^2 (a_iz_i^2z_3+ b_iz_i^3), 
\]
where the parameters are $((a_0,b_0),(a_1,b_1), (a_2, b_2))\in (\CC^2)^3$.
This family deforms each of the three $A_2$-singularities independently. A cubic surface in this family is  singular precisely if for some $i\in \{0,1,2\}$ the polynomial $z^3+a_i z +b_i$ has a double root.
Note also that this slice is invariant under $T_2\rtimes \Sfrak_3$: the $\Sfrak_3$-action is obvious and   $t=(t_0, t_1, t_2)\in T_2$ takes  $a_i$ to $t_i^{2}a_i$ and $b_i$ to $t_i^{3}b_i$. The $T_2\rtimes \Sfrak_3$-orbit space of this slice is an affine variety.  We shall in 
fact see that it is  quasi-homogeneous cone, whose complex-analytic germ
at the  vertex is that of $\M^\sst_\cub$ at $\infty$.

When we consider the $L$-marked version we must not divide out by the  $\Sfrak_3$-action  and should  think of 
$a_i$ resp.\  $b_i$ as a symmetric function of degree $2$ resp.\  $3$ of auxiliary variables $c_i, c'_i, c''_i$ with zero sum $c_i+c'_i+c''_i=0$ (and with  $t\in T_2$ acting on these as scalar multiplication by $t_i$).
So we get a  singular cubic surface if and only if   for some $i$, the  $c_i, c'_i, c''_i$  are not distinct.

We can eliminate $c''_i$ by the substitution $c''_i=-c_i-c'_i$. Then the space of $T_2$-invariant polynomials 
in the remaining variables $c_0, c'_0,c_1, c'_1,c_2, c'_2$ has a  monomial basis, namely the monomials 
 for which the degree  in the pair $c_i,c'_i$ is the same for all $i$. 
This is the algebra of regular functions on an affine cone over $\PP^1\times \PP^1\times \PP^1$. 
The cone is defined by $\Ocal_{(\PP^1)^3}(-1,-1,-1)$, i.e., 
the exterior tensor product of the tautological line bundles $\Ocal_{\PP^1}(-1)$ of the three factors. 
More concretely, it is the cone over the Segre embedding 
\[
([c_0:c_1], [c'_0:c'_1], [c''_0:c''_1])\in \PP^1\times \PP^1\times \PP^1\mapsto [(c_ic'_jc''_k)_{i,j,k}]\in \PP^7.
\]
Here it is best to think of a factor $\PP^1$ as the projectivization of the standard $2$-dimensional 
reflection representation 
of $\Sfrak_3$, for this then renders evident an action of $\Sfrak_3\wr\Sfrak_3$ on this cone. 
The cone is resolved by a single blowup at its vertex; this gives  us  the total space of  the  line bundle above, so with exceptional divisor  
$\PP^1\times \PP^1\times\PP^1$. Each factor $\PP^1$ comes with three points, 
respectively defined by $c=c'$, $c'=c''$ and $c''=c$.
%defined by 
%$c=c'$, $c'=c''$, $c''=c$. 
Let us use an  affine coordinate on $\PP^1$ for which these become 
$0,1, \infty$. The  strict transform of the discriminant locus hits this 
exceptional divisor in the divisor defined by the preimages of these three points under a projection on a 
factor $\PP^1$ and  so 
has $3.3=9$ irreducible components crossing normally in $(\PP^1)^3$. So this makes the total transform of the discriminant  a  normal crossing divisor. 

This situation is reproduced  at each point of $\M^\sst_\cub(L)$ over $\infty$ and so this
gives us a model for the $W$-equivariant 
Naruki modification \cite{naruki}, denoted here by  
\[
\widehat\M_\cub(L) \to \M^\sst_\cub(L).
\]
with  $\widehat\M_\cub(L)$ smooth and such that the preimage  $\widehat\Ds_\cub(L)$ of $\Ds_\cub^\sst(L)$  is a normal crossing divisor. The exceptional set of the Naruki modification consists of $40$ copies of $(\PP^1)^3$; these are all irreducible components of $\widehat\Ds_\cub(L)$. 

We write  $\widehat\M_\cub\to \M^\sst_\cub$ for what we get after passing  to $W$-orbit spaces.

\begin{remark}
Allcock-Carlson-Toledo \cite{act:cubic_surfaces} describe $\M^\st_\cub$ as a ball quotient. 
In those terms, $\M^\sst_\cub$ appears as the Baily-Borel 
compactification
and $\widehat\M_\cub$ is its toroidal resolution (in the sense of 
Ash-Mumford-Rapoport-Tai \cite{amrt}), which for ball quotients is unique.
\end{remark}

Since $\widehat\Ds_\cub(L)$ is a normal crossing divisor in $\widehat\M_\cub(L)$, there is defined the real oriented 
blowup 
\[
(\widehat\M^+_\cub(L),\p\widehat\M^+_\cub(L))\to (\widehat\M_\cub(L), \widehat\Ds_\cub(L)).
\]
We recover   $(\M^+_\cub(L),\p\M^+_\cub(L))\to (\M_\cub(L), \Ds_\cub(L))$ if we remove from this map  everything that lies over $\infty$.

\begin{proof}[Proof of Theorem \ref{thm:nearthediscriminant}]
Since $\widehat\Ds_\cub(L)$ is the preimage of $\Ds_\cub^\sst$, it follows from 
Lemma \ref{lemma:ample}  that  $\widehat\Ds_\cub(L)$ supports a positive ample divisor.
Its complement in $\widehat\M_\cub(L)$ is smooth and so classical  Lefschetz (or Morse) theory tells us that if $U$ is an open  regular neighborhood of
$\widehat\Ds_\cub(L)$ in $\widehat\M_\cub(L)$ with smooth boundary (which we want to choose $W$-invariant), then the pair 
$(\widehat\M_\cub(L)\ssm U, \p U)$ (a complex manifold of complex dimension 4 with boundary)  is $3$-connected. If $U^+$ is the preimage of $U$ in $\widehat\M_\cub^+(L)$, then
$(\widehat\M_\cub(L)\ssm U, \p U)\cong (\widehat\M^+_\cub(L)\ssm U^+, \p U^+)$ 
     and the inclusions 
\[
(\widehat\M^+_\cub(L)\ssm U^+, \p U^+)\subset(\widehat\M^+_\cub(L), U^+)\supset (\widehat\M_\cub^+(L),\p\widehat\M_\cub^+(L))
\]
are $W$-equivariant   homotopy equivalences of pairs. So $(\widehat\M_\cub^+(L),\p\widehat\M_\cub^+(L))$  is also $3$-connected.
The latter   differs from $(\M^+_\cub(L), \Ds^+_\cub(L))$
by the part that lies over $\infty$, i.e., the closures of the  tricuspidal walls.  We take a closer look at what happens there.

The closure of such a wall is the total space of  circle bundle $E\to (\PP^1)^3$, where $E$ is the bundle of rays of $\Ocal_{(\PP^1)^3}(-1,-1,-1)$.
The intersection of the closure of $\p\M_\cub^+(L)$ with $E$ is the preimage $E_D$ of the divisor  $D\subset (\PP^1)^3$, where one of the affine  coordinates is $0$, $1$ or $\infty$. We give  $\PP^1$ a cell structure with $\{0, 1,\infty\}$ as its $0$-cells, the three real intervals between them as $1$-cells and the two hemispheres (upper and lower) as $2$-cells. It is then clear that
$(\PP^1)^3\ssm D$ is a union of cells of dimension $\ge 3$. If we  endow $E$ with  a cellular decomposition
such that the projection onto $\PP^1\times\PP^1\times\PP^1$ is a cellular map,  then  $E\ssm E_D$ is also  union of cells of dimension $\ge 3$. We thus find  that in  the triple 
\[
(\widehat\M^+_\cub(L), \p\widehat\M^+_\cub(L), \overline{\p\M^+_\cub(L)})
\]
the last two items make up a $2$-connected pair  (the difference is a union of cells of dimension $\ge 3$). We already saw that the first two items make up a $3$-connected pair and hence 
$(\widehat\M^+_\cub(L),\overline{\p\M^+_\cub(L)})$ is $2$-connected. 

The manifold with corners 
$\widehat\M^+_\cub(L)$ contains $\M^+_\cub(L)$ as an
    open-dense  union of strata  and hence the inclusion is a homotopy equivalence. The same is true of the inclusion $\p\M_\cub^+(L))\subset \overline{\p\M_\cub^+(L)}$
and hence 
\[
(\M^+_\cub(L), \p\M_\cub^+(L))\subset (\widehat\M^+_\cub(L),\overline{\p\M_\cub^+(L)})
\]
is a homotopy  equivalence. So $(\M^+_\cub(L), \p\M_\cub^+(L))$ is $2$-connected.
\end{proof}

\section{The discriminant and its normal bundle} 
One of the  goals of this section is to reconstruct the $W$-manifold $\p\M_\cub^+(L)$  entirely in terms of $W$, that is, without any reference to a cubic surface. This will lead to a determination of its orbifold fundamental group (and hence of $\M^\cub$) in terms of $W$ only.

\subsection{Degree 6 divisors on the Riemann sphere} 
In what follows we  use notation and results of the appendix. 

Let $C$ be a smooth projective curve of genus zero (so a copy of $\PP^1$). 
In the appendix we define for every  set $P$ of finite size $n\ge 3$ the space $\M_P$ of 
$\aut (C)$-orbits of injections  $\varphi: P\hookrightarrow C$. 
This is contained in the space $\M^\st_P$ of $\aut (C)$-orbits of stable maps  
$\varphi: P\to C$ (maps whose fibers have size $<n/2$).  
The latter is  projective if  $n$ is odd. If $n$ is even it is projectively completed 
by a finite set with each added point corresponding to a  partition of 
$P$ into two subsets of equal size $n/2$, each constituting a fiber of $\varphi$.  
The spaces $\M_P\subset \M^\st_P\subset \M^\sst_P$ come with an action of the permutation group 
$\Sfrak_P$ and yield  orbit spaces denoted   $\M_{[n]}\subset \M^\st_{[n]}\subset \M^\sst_{[n]}$.  
When $n\ge 5$, the irreducible components of the stable discriminant $\Ds^\st_P=\M^\st_P\ssm \M_P$ are in bijective correspondence with 2-element subsets of $P$, or equivalently, with the  transpositions in $\Sfrak_P$.
We recover the boundary divisor from the transposition as the one that is left pointwise fixed by it.

In case $C$ is defined over $\RR$, then  $\M_P$ and 
$\M^\st_P$  are also defined  over $\RR$.  There are two cases:  $C$ has a real point (the split case) or not (the nonsplit case). We shall always assume we are in the split case. Then stereographic projection from a real point shows that $C$ is $\RR$-isomorphic with $\PP^1$, so that in particular, $C(\RR)\cong \PP^1(\RR)$. A point of $\M_P^\st(\RR)$
has then 
the  interpretation of a $\PSL_2(\RR)$-orbit of  a stable maps $P\hookrightarrow \PP^1(\RR)$.  However, if  we divide out by $\Sfrak_P$, we get $\M_{[n]}$ resp.\ 
$\M^\st_{[n]}$: the space of positive  reduced resp.\ stable degree $n$ divisors on $\PP^1$ modulo projective equivalence. Then a real point  of $\M_{[n]}(\RR)$ is representable by an $n$-element subset of $\PP^1(\CC)$ invariant under complex conjugation. A connected component is of $\M_{[n]}(\RR)$ is then  given  by the integer $m$
which records
the number of complex conjugate pairs. Let us say that we are in the split component if $m=0$.

We  here concentrate on the case when  $P$ has size $6$. 
Then  $\M^\st_P$ is smooth and  parametrizes  $\aut (C)$-orbits of maps 
$P\to C$  whose fibers  have size at most 2. What makes this case special is that $\Ds^\st_P=\M^\st_P\ssm \M_P$ is then a normal crossing divisor  and that two irreducible components meet if and only if their associated transpositions pairwise commute. 
Let  
\[
\M^+_P\to \M^\st_P.
\]
be its   real oriented blowup.
The $\Sfrak_P$-action naturally lifts to $\M^+_P$ and the inclusion of its interior $\M_P\subset \M^+_P$ is a
$\Sfrak_P$-equivariant homotopy equivalence.

%The points of $\M^\sst_P\ssm  \M^\st_P$ are defined by the $\varphi$ having only two fibers, each of size 3 (so there are $\half\binom{6}{3}=10$ of those). While $\M^\st_P$ is smooth, this is no longer the case for  $\M^\sst_P$. 

\subsection{Normalization of the discriminant}
The main goal of this subsection is to identify (over $\RR$) the normalization of $\Ds_\cub$ with $\M^\st_{[6]}$.

Let $X$ be a stable cubic surface with a node at $o\in X$. If  we choose  affine 
coordinates $z_1,z_2,z_3$ for the ambient $\PP^3$ having $o$ as origin, then
the surface in question has an equation of the form 
\begin{equation}\label{eqn:form at node}
f(z_1,z_2,z_3)=f_2(z_1,z_2,z_3)+ f_3(z_1,z_2,z_3)
\end{equation}
where  $f_2$   defines  a  nondegenerate quadratic form and $f_3$ is homogeneous of degree $3$. The more intrinsic way of saying this is that the Zariski tangent space $T_oX$ of $X$ at $o$ is all of $T_o\PP^3$  and that
$f_2$ defines the tangent cone of $X$ in $T_o\PP^3$. So the  projectivization of this tangent cone 
 is  the conic $C\subset \PP(T_o\PP^3)$ defined by $f_2$. Note that  $f_3$ cuts out on 
$C$ a divisor $D$ of degree $6$ which parametrizes the lines on $X$ that pass through  $o$. A straightforward computation shows that $D$ cannot have  point of multiplicity 
three and so the  stability condition for $X$ translates into a stability condition for the divisor $D$: 
its  multiplicities are $\le 2$. 

The pair $(C,D)$ is a complete invariant of the pair $(X,o)$. Indeed, we  recover $(X, o)$ from $(C,D)$ by first embedding $C$ in its natural ambient projective  plane, 
namely, the one given by its anticanonical embedding  $C\hookrightarrow \PP_C:=\check\PP(H^0(C, \theta_C))$. 
We choose a total order $\prec$ on the six
points of $D$ (counted with multiplicity)
and  denote by $\bl_{D, \prec}(\PP_C)\to  \PP_C$ the blowup of
$D$ obtained in this order. This is well-defined, even if $D$ has multiplicities $>1$, for during this process  we  identify $C$ with its strict transform under a blowup. Then $X$  appears as the anticanonical image of $\bl_D(\PP_C)$ in a projective $3$-space (and so is independent of the total order order $\prec$). This anticanonical map contracts the smooth genus zero curves  with self-interaction $-2$ (these will be pairwise disjoint)
and thus $\bl_{D, \prec}(\PP_C)$ provides us with the minimal resolution of $X$. Among the contracted curves is (the strict transform of) $C$ in $\bl_D(\PP_C)$, as it   has self-intersection $4-6=-2$;  it is  contracted to produce the node $o\in X$. 

We have thus defined a surjective morphism 
$\M^\st_{[6]}\to  \Ds_\cub$ of degree one.  It is also finite (in case $X$ has more that one ordinary double point, it singles out one of them) and so this is a normalization.

If $X$ is defined over $\RR$ and $o$ is a real point of $X$, then we can take the form \eqref{eqn:form at node} to be defined over $\RR$ as well. Then its projectivized  tangent cone $C$  at $o$ is split if and only if $f_2$ is indefinite. We are then in the split component of $\M_{[6]}$ if and only if each line on $X$ through $o$ is defined over $\RR$.

\medskip
If $o$ is the only singular point of $X$ (which means that $D$ is reduced), then a total order $\prec$ on the points of $D$ 
%essentially 
amounts to giving an $L$-marking of $X$: number the points of $D$ by $1, 2, \dots , 6$ accordingly  and assign to 
$e_i\in L$ the line on $X$ through $o$ defined by it and assign  to $\ell$ the class of  a general line on $\PP^2_C$. This identifies 
$\M^\st_{\{1, \dots, 6\}}$ with an irreducible component of $\M^\st_\cub$. We record this as follows.

\begin{corollary}\label{cor:normalization}
The map $j:\M^\st_{[6]}\to  \Ds_\cub$ constructed above  is a normalization and defined over $\RR$. 
    An  $L$-marking of a stable  cubic surface with exactly one node gives rise to  a  labeling of the points of  $D$ by 
    a $W$-translate $P$ of $\{e_1,\dots,e_6\}$. 
    %a $6$-element set $P$.
    This normalization lifts to  an isomorphism  from $\M^\st_P$ onto an irreducible component of  $\Ds_\cub(L)$.

The latter isomorphism  is defined over $\RR$ if and only if this  irreducible component of  $\Ds_\cub(L)$ contains a point defined by a cubic surface~$X$
    with a node as its unique singular point through which pass six real lines on $X$.
\end{corollary}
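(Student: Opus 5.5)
The plan is to assemble the corollary from the construction just given, proving each clause in turn.

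\emph{Normalization.} The map $j:\M^\st_{[6]}\to\Ds_\cub$ sends the class of $(C,D)$ to the anticanonical model $X$ of $\bl_{D,\prec}(\PP_C)$, together with the node obtained by contracting $C$. I would verify three properties.
\begin{enumerate}
\item It is well defined and algebraic. Blowing up a family of stable degree $6$ divisors on conics gives a flat family of weak del Pezzo surfaces of degree $3$. The relative anticanonical map then gives a family of stable cubic surfaces, and $\PGL_2$-equivalent $(C,D)$ give projectively equivalent $X$.
\item It has degree one. Over the open set where $X$ has a single node, the inverse $(X,o)\mapsto(C_o,D_o)$ is algebraic, since $C_o$ is the projectivized tangent cone and $D_o$ is cut out by $f_3$.
\item It is finite. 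The fiber over $[X]$ is the set of nodes of $X$ modulo $\aut(X)$, which has at most $4$ elements.
\end{enumerate}
Since $\M^\st_{[6]}$ is normal (it is a quotient of the smooth $\M^\st_P$), a finite birational map from it onto $\Ds_\cub$ is the normalization. For the field of definition: $\PP_C$, the blowup and the anticanonical map are all defined over $\RR$ whenever $(C,D)$ is. Hence $j$ commutes with complex conjugation and so is defined over $\RR$.

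\emph{Markings and labels.} Let $X$ have exactly one node $o$ and carry an $L$-marking. Fix the factorization $L\cong H_2(\tilde X)$ with $\tilde X=\bl_D(\PP_C)$. By the review in \S\ref{subsect:marked stable}, the $12$ classes in $\Escr_s$ are the strict and total transforms of the six lines through $o$. The six exceptional curves of $\bl_D(\PP_C)\to\PP_C$ have pairwise orthogonal classes in $\Escr$. These classes $\epsilon_1,\dots,\epsilon_6$ map bijectively onto the points of $D$.

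Moreover $\epsilon_1,\dots,\epsilon_6$ together with the pullback $h$ of a line class form an orthonormal-type basis with $\kappa=3h-\sum\epsilon_i$. Uniqueness of $(L,\kappa)$ then gives some $w\in W$ with $w(e_i)=\epsilon_i$, so $P=w\{e_1,\dots,e_6\}$. Changing the factorization by the reflection $s$ replaces $P$ by $s(P)$, so the choice of $P$ is pinned down once the factorization is fixed.

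\emph{Lift to an isomorphism.} The labeling defines a map $\M_P\to\Ds_\cub(L)$ over $j$, landing in the component fixed by $s$. It extends over $\M^\st_P$ because $\Ds_\cub(L)$ is normal along that component, being a smooth component of a simple normal crossing divisor. Its inverse on one-nodal points is given by labeled $(C_o,D_o)$. So this is a finite birational morphism between smooth varieties, hence an isomorphism.

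\emph{Reality criterion.} If the isomorphism is defined over $\RR$, then real points of $\M_P$ map to real surfaces with a real node. A point of $\M_P(\RR)$ is an injection of $P$ into $\PP^1(\RR)$, so all six lines through $o$ are real. Conversely, suppose the component contains such an $X$. Then complex conjugation preserves both the component and the labeling $P$, since the six lines and their exceptional classes are fixed. The conjugate isomorphism therefore agrees with the original one on a Zariski-dense real set, hence everywhere.

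\emph{Main obstacle.} The delicate step is this last one: showing that complex conjugation acts compatibly on the marking, so that it fixes the component rather than sending it to another one. I would handle it using the split-component discussion for $\M_{[6]}(\RR)$.
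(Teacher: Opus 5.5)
Your proposal is correct and takes essentially the paper's route. You build $j$ by blowing up $D$ on the anticanonically embedded conic, get degree one because $(C,D)$ is a complete invariant of $(X,o)$, and conclude normalization from finiteness plus normality of $\M^\st_{[6]}$. The marking comes from the six exceptional classes of $\bl_D(\PP_C)$, and the reality clause from the six lines through the node being real, i.e.\ from the split component of $\M_{[6]}(\RR)$.

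Two details you (like the paper) leave loose. Bounded fibres give only quasi-finiteness, so finiteness also needs properness: the strictly semistable point of $\M^\sst_{[6]}$ maps to $\infty\in\M^\sst_\cub$. And extending your lift over $\M^\st_P$ rests on normality of the source $\M^\st_P$, not of $\Ds_\cub(L)$: both it and the component are the normalization of $\Ds_\cub$ in the same function field.
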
 

%The set $P$ that appears here is a $W$-translate of $\{e_1, \cdots, e_6\}$. 
We shall see in the next subsection how the $W$-translate $P$ of $\{e_1,\dots,e_6\}$
%this 
can be obtained  in a 
%more 
natural  manner.

In the appendix we introduce the (orbifold) line bundle $\Ls^\st_{[6]}$ over $\M_{[6]}$. It is defined by the functor which assigns to a degree $6$ divisor $D$ on $C$ the one-dimensional vector space  
$\Ls_D$  which via   Lemma \ref{lemma:cohomological interpretation} is identified with  
$H^1(C, \theta^2_C(-D))$. We take as here as the definition of $\Ls_D$. Lemma \ref{lemma:section} implies that this line bundle has  a smooth nowhere zero section (hence is topologically  trivial).

\begin{proposition}\label{prop:normal bundle identification}
Let $C$ be as above and  $D$ a stable degree 6 divisor  on $C$.
    Denote by $[(C,D)]$ the point of $M^\st_{[6]}$ it defines.
    Then:
\begin{enumerate}
\item[(i)] The normal space of  $j: \M^\st_{[6]}\to\M^\st_\cub$ 
at $[(C,D)]$ and $\Ls_D$ are canonically isomorphic. This identifies  the normal bundle of $j$ with $\Ls^\st_{[6]}$.
\item[(ii)] If $x\in C$ appears in $D$ with multiplicity $2$, then
$\Lcal_D$ is canonically isomorphic  with $T_xC$.
\end{enumerate}
\end{proposition}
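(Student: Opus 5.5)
Work in the slice of cubic surfaces with a node at a fixed point $o$ and compute the normal direction to the discriminant deformation-theoretically. Near $X$, with affine equation $f=f_2+f_3$ as in \eqref{eqn:form at node}, the space of cubic surfaces is the space of cubic polynomials $g$ (modulo scalars and the action of $\PGL_4$). Let $J$ be the Jacobian (Tjurina) ideal of $f$. The first-order deformations of $X$ modulo projective equivalence are cubic forms modulo the tangent space to the $\PGL_4$-orbit, i.e.\ modulo the degree-$3$ part of $J$. Those preserving the node are the ones vanishing at $o$ to order $\ge 2$ after a change of coordinates. So the normal space to $j$ at $[(C,D)]$ is the $1$-dimensional cokernel of
\[
\{\text{cubic forms}\}\longrightarrow \Ocal_{X,o}/(\text{Jacobian ideal})\cong \CC ,
\]
the local Tjurina algebra of an $A_1$-point, given by evaluating at $o$. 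Intrinsically this is $T^1_{X,o}$, and in the locally trivial case the normal space to the equisingular stratum is $T^1_{X,o}$ modulo the image of global projective vector fields; for a node the latter image is zero. Hence the normal line is $T^1_{X,o}$ twisted by the fiber of $\Ocal(3)$ at $o$. The task is then to identify this line canonically with $H^1(C,\theta_C^2(-D))$.

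For that identification I would pass to the blowup picture. Write $X$ as the anticanonical model of $Y=\bl_{D}(\PP_C)$, with $C\subset Y$ the $(-2)$-curve contracted to $o$. Deformations of $X$ modulo automorphisms correspond to deformations of $Y$ together with the contraction, and the normal direction corresponds to deformations of $Y$ in which $C$ fails to deform. That obstruction lives in $H^1(C,N_{C/Y})=H^1(C,\theta_C^{-1})$, but this is the wrong line. The better route is the standard exact sequence
\[
0\to \theta_Y(-\log C)\to\theta_Y\to N_{C/Y}\to 0 .
\]
The normal space to the locus where $C$ survives is the cokernel of $H^1(\theta_Y(-\log C))\to H^1(\theta_Y)$, which maps into $H^1(C,N_{C/Y})$. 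Descending to $\PP_C$ (blowups at $D$), $N_{C/Y}=N_{C/\PP_C}(-D)=\theta_C^{2}(-D)$, since $N_{C/\PP_C}=\Ocal_C(4)=\theta_C^2$. This gives $H^1(C,\theta_C^2(-D))=\Ls_D$, which is one-dimensional because the degree is $-2$. I then need to check that the map $H^1(\theta_Y)\to H^1(N_{C/Y})$ is onto, because $H^2(\theta_Y(-\log C))=0$ for a rational surface. I also need that its image modulo the deformations keeping $D$ on a conic, which form the kernel $H^1(\theta_Y(-\log C))$, is precisely the normal space of $j$. Doing this in families over $\M^\st_{[6]}$ makes the isomorphism canonical and hence identifies the normal bundle with $\Ls^\st_{[6]}$. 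For $D$ with double points I use the iterated blowup $\bl_{D,\prec}$; the independence of $\prec$ is clear because $N_{C/Y}$ only depends on $D$.

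For (ii), when $x$ has multiplicity $2$ I use the residue/Serre-duality description. First, $H^1(C,\theta_C^2(-D))$ is dual to $H^0(C,\omega_C^{3}(D))$, and $\omega_C^3(D)$ has degree $0$, so it is canonically trivial once trivialised. More concretely, $\theta_C^2(-D)=\theta_C^2(-2x-D')$, with $D'$ of degree $4$. The coboundary of the sequence $0\to\theta_C^2(-D)\to\theta_C^2(-D')\to\theta_C^2(-D')|_{2x}\to0$ gives a canonical isomorphism, since $H^0$ and $H^1$ of the degree-$0$ middle term are $\CC$ and $0$. This identifies $\Ls_D$ with a quotient of the $2$-jet space, and composing with the projection to the linear term gives $T_x^{\otimes2}\otimes T_x^{-1}\cong T_xC$. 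Geometrically, the smoothing parameter of the second node is then a tangent direction at $x$ on $C$.

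The main obstacle I expect is the middle step: showing rigorously, and canonically, that the cokernel of $H^1(\theta_Y(-\log C))\to H^1(\theta_Y)$ is the normal space of $j$ inside $\M^\st_\cub$. This requires matching deformations of $Y$ with deformations of the contracted surface $X$, which by Burns–Wahl are related by the simultaneous resolution, i.e.\ a $\Z/2$ base change. I would handle this by working on the resolved $W$-cover and noting that the factor $\theta_C^2$ versus $\theta_C^{-1}$ accounts exactly for that base change.
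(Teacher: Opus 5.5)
\textbf{There is a genuine gap, and it sits exactly at the step you flag as the main obstacle.}

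The line from your first paragraph is the Tjurina line $T^1_{X,o}$, i.e.\ the normal to $\Ds_\cub$ in the unmarked $\M^\st_\cub$. It is \emph{not} canonically $\Ls_D$. Near a one-nodal point, the map from the resolved (marked) deformation space to the unmarked one is the simultaneous-resolution double cover $t=u^2$. So the normal line $N_t$ downstairs is $N_u^{\otimes 2}$, where $N_u$ is the normal line upstairs, and it is $N_u$ that equals $H^1(C,\theta_C^2(-D))$.

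No bookkeeping with $\theta_C^2$ versus $\theta_C^{-1}$ can absorb this. Both bundles have degree $-2$, and $N_{C/Y}$ is canonically $\theta_C^2(-D)$ and only abstractly $\cong\theta_C^{-1}$, so there is no hidden factor of two. For the same reason, your remark that $H^1(C,N_{C/Y})$ is ``the wrong line'' is mistaken: it is exactly the line you want.

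A concrete check shows the two lines really differ. Take $F=w(xz-y^2)+z^3-x^3$. It has a single node at $o=[0{:}0{:}0{:}1]$, with $(C,D)=(\PP^1,\mu_6)$.
\begin{itemize}
\item Let $g=\mathrm{diag}(1,\zeta,\zeta^2,\zeta^{-2})$ with $\zeta=e^{2\pi i/6}$. It fixes both $F$ and $w^3$, so it acts trivially on $T^1_{X,o}$.
\item On $C=\{[1{:}t{:}t^2]\}$, $g$ induces $t\mapsto\zeta t$. This sends $(dt)^3/(t^6-1)$, which spans $H^0(C,\omega_C^3(D))$ (Serre dual to $\Ls_D$), to its negative. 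So $g$ acts by $-1$ on $\Ls_D$.
\end{itemize}
Hence there is no natural isomorphism $T^1_{X,o}\cong\Ls_D$, and the plan of your first and last paragraphs cannot be completed.

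\textbf{The fix is to discard the Tjurina step.} The paper's proof computes the normal line in the space of deformations of the $0$-cycle $D\subset\PP_C$, i.e.\ in the marked space, which is locally $\mathrm{Def}(Y)$. This is also what the paper uses afterwards: the normal bundle of $j_s\colon\M_s\hookrightarrow\M_\cub(L)$, on whose fibres $s$ acts as $-1$.

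In that setting your log-tangent argument works as written. Since $H^0(N_{C/Y})=0$ and $H^2(\theta_Y(-\log C))=0$, you get
$0\to H^1(\theta_Y(-\log C))\to H^1(\theta_Y)\to H^1(N_{C/Y})\to 0$,
and $N_{C/Y}=\theta_C^2(-D)$. This is an intrinsic version of the paper's route. The paper instead takes the cokernel of $H^0(\theta_C^{2})\to H^0(\Ocal_D\otimes\theta_C^{2})$, identifying $H^0(\theta_C^2)$ with infinitesimal motions of the conic ($\mathfrak{pgl}_3/\mathfrak{pgl}_2$). The paper's version avoids deformation theory of $Y$ altogether, including the infinitely-near-point bookkeeping when $D$ is nonreduced.

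\textbf{Part (ii) is correct but uses a different auxiliary divisor.}
\begin{itemize}
\item The paper removes one copy of $x$, writing $D=D''+(x)$. Then $\theta_C^2(-D'')$ has degree $-1$ and no cohomology, so the connecting map is directly an isomorphism $\theta_C^2(-D'')_x=T_xC\to\Ls_D$.
\item Removing $2x$, as you do, leaves a degree-$0$ middle term with a one-dimensional $H^0$. You then need that its nowhere-vanishing section spans a complement of the subspace $\theta_C^2(-D')_x\otimes T_x^*C$ in the $1$-jet space at $x$.
\item It is therefore the inclusion of that subspace, not a ``projection to the linear term'', that maps isomorphically onto $\Ls_D$.
\end{itemize}
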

\begin{proof}
We regard $C$ as anticanonically embedded in $\PP_C$ (i.e., by the degree 2 linear system $|\theta_C|$). Then
the normal bundle of $C$ is $\theta_C^{\otimes 2}$.

In view of the preceding, the normal of $j$ at $[(C,D)]$ is the space of infinitesimal deformations of $D$, regarded  as a positive $0$-cycle on $\PP_C$,  modulo those  that make $D$ lie  on an infinitesimal deformation of $C$.
The infinitesimal deformations of $D$ which lie on an infinitesimal deformation of $C$ can be
identified with the space of sections of the normal bundle of $C$ in $\PP^2_C$, so with $H^0(C, \theta_C^{\otimes 2})$. To see this: the conics in  $\PP^2_C$ form  a single orbit under its automorphism group  $\aut(\PP^2_C)\cong \PGL_3(\CC)$ and hence an infinitesimal deformation of $C$ is represented by an element of the Lie algebra of $\aut(\PP^2_C)$. The $\aut(\PP^2_C)$-stabilizer  of 
$C$ is  the image of  $\aut(C)\cong \aut(\PP^2_C)$ and the quotient of their Lie algebras can be
identified with $H^0(C, \theta_C^{\otimes 2})$. 

Hence  the normal line of $j$ at   $[(C,D)]$  is the cokernel of  the restriction map
\[
H^0(\theta_C^{\otimes 2})\to H^0(\Ocal_D\otimes_{\Ocal_C}\theta_C^{\otimes 2}).
\]
Since $\theta_C^{\otimes 2}$ has degree $4$,    $H^1(C, \theta_C^{\otimes 2})=0$ and so the long exact cohomology sequence of  the short exact sequence 
\[
0\to \theta_C^{\otimes 2}(-D)\to  \theta_C^{\otimes 2}\to \Ocal_D\otimes_{\Ocal_C}\theta_C^{\otimes 2}\to 0
\]
identifies
    this cokernel with $H^1(C, \theta_C^{\otimes 2}(-D))=\Ls_D$.

    For (ii), we write  $D=D'+(x)$ with  $x$ appearing in  $D'$ with multiplicity $1$. Then the fiber of $\theta^2_C(-D')$ over $x$ is naturally identified with $T_xC$ (in terms of a local coordinate $z$ at $x$, this assigns to 
$zf(z)\frac{d}{dz}\otimes \frac{d}{dz}$  the tangent vector $f(0)\frac{d}{dz}$) and so we have an exact sequence 
\[
0\to \theta^2_C(-D)\to  \theta^2_C(-D')\to i_{x*}T_xC\to 0, 
\]
where $i_x:\{x\}\subset C$. The degree of $\theta^2_C(-D')$ is $-1$ and so $H^1(C, \theta^2_C(-D'))=0$.
Hence the long exact cohomology sequence then yields an isomorphism $T_xC\cong H^0(C, \theta^2_C(-D))\cong \Ls_D$.
\end{proof}

\subsection{Reconstruction of the discriminant}
Recall that the irreducible components of $\Ds_\cub(L)$ are indexed by the reflections in $W$.  Here is how we associate to a reflection $s\in W$ in a $W$-equivariant manner  a set $P_s$ of size $6$ which via  Corollary \ref{cor:normalization} identifies $\M^\st_{P_s}$ with that  irreducible component.

Since  $W$ is a Coxeter group of type $E_6$, the $W$-centralizer $C_W(s)$ is a reflection subgroup of type $A_1\times A_5$ with the $A_1$ factor  represented by $s$. We realize this as a permutation group $\Sfrak_2\times \Sfrak_6$ by considering its action on the 27-element set  $\Escr\subset L$: the reflection $s\in W$ fixes $15$ elements of $\Escr$. The remaining 12, 
which we shall denote by $\Escr_s$,
form a `double six' in the sense that  $\Escr_s$  decomposes 
 uniquely
into two subsets 
$\Escr^+_s$  and $\Escr^-_s$  of size 6, each consisting of mutually
orthogonal
vectors, that  are exchanged  by $s$.
If $\alpha\in \Rscr$ is a root for $s$
 then we can take $\Escr^\pm_s$  to be 
the set of $e\in \Escr$ satisfying $\alpha\cdot e=\pm1$.
This decomposition
is preserved by  $C_W(s)$ and indeed, $C_W(s)$ is thus realized as
its stabilizer in the full symmetric group of  $\Escr_s$.  Here $s$ exchanges 
$\Escr^+_s$  and $\Escr^-_s$, and the other reflections  in
$C_W(s)$ act on each $\Escr^+_s$  and $\Escr^-_s$ as a transposition.
For example, if $\alpha=2\ell -e_1-e_2-\cdots -e_6$, then  
\[
\Escr^+_{s_\alpha}:=\{ e_i\}_{i=1}^6, \quad \Escr^-_{s_\alpha}:=\{\kappa-\ell +e_i\}_{i=1}^6.
\]
and the reflection in $e_i-e_j$ ($1\le i<j\le 6$) 
induces the evident transposition in either set.
We define $P_s$ as the orbit set  $\la s\ra \bs \Escr_s$.
This is a $6$-element set on which $C_W(s)$ acts
as the full permutation group with $s$ acting trivially and the other reflections in $C_W(s)$ acting as transpositions. We identify $P_s$ with the set of sums $e+s(e)\in L$, where $e$ varies over~$\Escr_s$. 

\medskip

\begin{lemma}\label{lemma:bijection}
Let $s$ and $s'$ be distinct reflections in $W$ which commute. Then $s'$ leaves four points of $P_s$ fixed and exchanges the remaining two (and similarly for $s$ acting on $P_{s'}$).  There is a bijection $g^s_{s'}$ between the fixed point set $s'$ in  $P_s$ and the fixed point set of $s'$ in $P_{s'}$, which is characterized by the property that $g^s_{s'}(a)$  is the unique such point not perpendicular to $a$.
\end{lemma}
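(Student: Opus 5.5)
The plan is to reduce to a single explicit pair of commuting reflections by $W$-equivariance and then compute directly in the basis $\ell,e_1,\dots,e_6$.

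\emph{Reduction.} All constructions involved are $W$-equivariant: $s\mapsto \Escr_s$, $P_s$, the action of $s'$, and the pairing on $L$. So it suffices to treat one representative of each $W$-orbit of ordered pairs $(s,s')$ of distinct commuting reflections. Two such reflections have orthogonal roots, because $\alpha\cdot\alpha'\in\{0,\pm1\}$ for distinct roots $\alpha\neq\pm\alpha'$, and the reflections commute iff $\alpha\cdot\alpha'=0$. Hence $s'$ is a reflection in $C_W(s)$ other than $s$.

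$W$ is transitive on reflections, so we may fix $s=s_\alpha$ with $\alpha=2\ell-e_1-\cdots-e_6$. Then $C_W(s)\cong\Sfrak_2\times\Sfrak_6$, and its reflections other than $s$ are the $s_{e_i-e_j}$, which are all conjugate under the $\Sfrak_6$ factor. So there is a single orbit, and we may take $s'=s_\beta$ with $\beta=e_1-e_2$.

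\emph{Computation of $P_s$.} Here $\kappa-\ell=2\ell-\sum_j e_j$. The elements of $P_s$ are
\[
a_i=e_i+(\kappa-\ell+e_i)=(\kappa-\ell)+2e_i,\qquad i=1,\dots,6 .
\]
The reflection $s'$ swaps $a_1,a_2$ and fixes $a_3,\dots,a_6$. This gives the first claim for $s'$ acting on $P_s$.

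\emph{Computation of $P_{s'}$.} Next determine $\Escr^\pm_{s'}$ from the sign of $\beta\cdot e$:
\[
\Escr^+_{s'}=\{e_2,\ \ell-e_1-e_k\ (k=3,\dots,6),\ \kappa-\ell+e_1\},
\]
and $\Escr^-_{s'}$ is obtained from it by exchanging the indices $1$ and $2$. Summing each $e$ with $s'(e)$ gives the six points of $P_{s'}$:
\[
e_1+e_2,\qquad b_k=2\ell-e_1-e_2-2e_k\ (k=3,\dots,6),\qquad 2(\kappa-\ell)+e_1+e_2 .
\]
The lines $\ell-e_i-e_j$ satisfy $\alpha\cdot(\ell-e_i-e_j)=0$, so they are fixed by $s$. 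Hence $s$ fixes the four $b_k$. On the other hand $s$ sends $e_i$ to $\kappa-\ell+e_i$, so it swaps $e_1+e_2$ with $2(\kappa-\ell)+e_1+e_2$. This gives the symmetric claim. (It also follows formally by exchanging the roles of $s$ and $s'$.)

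\emph{The bijection.} Compute, for $i,k\in\{3,\dots,6\}$,
\[
(\kappa-\ell)\cdot b_k=4-1-1-2=0,\qquad 2e_i\cdot b_k=4\delta_{ik},
\]
so $a_i\cdot b_k=4\delta_{ik}$. Thus each fixed $a_i$ is non-perpendicular to exactly one fixed point of $s$ in $P_{s'}$, namely $b_i$. Setting $g^s_{s'}(a_i)=b_i$ therefore defines a bijection, uniquely characterized as stated. By equivariance this holds for every commuting pair.

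The only delicate points are the orbit reduction and getting the signs and dot products right, in particular $(\kappa-\ell)\cdot b_k=0$; everything else is bookkeeping.
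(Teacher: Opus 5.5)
Your proof is correct and follows the same route as the paper: reduce by conjugacy to $s=s_{2\ell-e_1-\cdots-e_6}$ and $s'=s_{e_1-e_2}$, then compute $a_i\cdot b_k=4\delta_{ik}$ directly. One harmless slip: since $(e_1-e_2)\cdot(\kappa-\ell+e_1)=-1$, the conic class in $\Escr^+_{s'}$ should be $\kappa-\ell+e_2$, not $\kappa-\ell+e_1$ (as written, your set is not mutually orthogonal, because $e_2\cdot(\kappa-\ell+e_1)=1$), but this leaves the $s'$-orbits, and hence $P_{s'}$, unchanged.
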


\begin{proof}
Since the $A_1\times A_1$-subsystems of the $E_6$-Weyl group $W$ form a single conjugacy class it suffices to check this for one particular case, for example $s$ is the reflection in $\alpha:=2\ell-e_1-\cdots-e_6$ and $s'$ is the reflection in $\alpha':=e_1-e_2$. Then
\[
(P_{s})^{s'}\cong \{\kappa-\ell +2e_i\}_{i=3}^6=\{ \alpha  +2e_i\}_{i=3}^6
\]
and $s'$ acts as the transposition of  the first two elements $\alpha +2e_1, \alpha+2e_2$. Note that the second equality writes an element of $P_s$  as the sum of $\alpha$ and twice an element of $\Escr_s$.
Similarly, 
\[
(P_{s'})^s\cong\{2\ell-e_1-e_2-2e_i\}_{i=3}^6=\{\alpha'+2(\ell-e_1-e_i)\}_{i=3}^6
\]
We then note that  $(\alpha  +2e_i)\cdot (\alpha'+2(\ell-e_1-e_j))=4\delta_{ij}$. One checks that indeed  $g^s_{s'}(\alpha  +2e_i)=
\alpha'+2(\ell-e_1-e_i)$.
\end{proof}

In the notation of
the previous section, $\M_{P_s}$ is the moduli space of maps $P_s\to \PP^1$ that are injective,
whereas $\M_{P_s}^\st\supset \M_{P_s}$ allows the fibers of such a map to have  size $2$ (but not greater).  We simplify notation by writing $\M_s$ for $\M_{P_s}$ and $\M_s^\st$ for $\M^\st_{P_s}$.

We describe  the inverse map which identifies  an irreducible component 
of $\Ds_\cub(L)$ with some $\M^\st_s$.  For this we  represent an  element of $\Ds_\cub(L)$ by a marked cubic surface $(X, L\twoheadrightarrow H_2(X))$ and single out  a node $x\in X$ such that the associated  reflection in $W$ (see Subsection \ref{subsect:marked stable}) is $s$. Here is how this yields a point of $\M^\st_s$. According to that remark, $\Escr_s$ then maps to the classes of the lines on $X$ which pass through $o$  and this map factors through $P_s$. If $C_x$ denotes the projectivized tangent cone of $(X,x)$, then the resulting map $P_s\to C_x$ defines a point of $\M^\st_s$. Its image (as a zero cycle on $C_x$) is the divisor $D_x$ we defined in Subsection \ref{subsect:marked stable}.

We have described each component of $\Ds_\cub$ without reference to cubic surfaces.
Next we describe which subsets are identified under the map
$\coprod_s\M_s^\st\to\Ds_\cub(L)$, and how.  
This will yield a description of $\Ds_\cub(L)$, which still makes no
reference to cubic surfaces.  
Recall from Lemma \ref{lemma:linesonstablecubic} that two components of~$\Ds_\cub(L)$ meet just if
their associated reflections $s,s'$ commute.  
As we observed,  $s'$ then induces  a transposition  in $P_s$ and for the 
same reason  $s$ then induces  a transposition  in $P_{s'}$.  Since 
$s'$ acts as a transposition  in $P_s$, its fixed-point set in $\M_s^\st$ consists of the locus where two of the six points
of~$P_s$ map to the same point of~$C$, i.e.,
% determines 
a boundary divisor $\M^\st_{s,s'}$ of $\M_s^\st$. 
Likewise $s$ determines a boundary divisor 
$\M^\st_{s',s}$ of $\M_{s'}^\st$. We shall see that the two are naturally identified. 

\begin{lemma}\label{lemma:symmetry}
The subvariety $\M^\st_{s}\times_{\M_\cub(L)}\M^\st_{s'}$ of $\M^\st_{s}\times\M^\st_{s'}$ is the graph 
of an isomorphism $\M^\st_{s,s'}\cong\M^\st_{s',s}$. 
This isomorphism is induced by 
either of the two bijections $P_s\to P_{s'}$ which extend
the bijection $g^s_{s'}$ found in Lemma \ref{lemma:bijection}.
Such a bijection  lifts the  isomorphism 
$(C_x,D_x)\cong (C_{x'},D_{x'})$ of Lemma \ref{lemma:linesonstablecubic}.
 \end{lemma}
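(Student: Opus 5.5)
We will work pointwise. A point of $\M^\st_{s}\times_{\M_\cub(L)}\M^\st_{s'}$ is a marked stable cubic surface $(X, L\twoheadrightarrow H_2(X))$ (here the fiber product is over $\Ds_\cub(L)\subset\M^\st_\cub(L)$) lying on both components, i.e.\ having nodes $x,x'$ whose associated reflections are $s$ and $s'$. By Lemma \ref{lemma:linesonstablecubic} such $s,s'$ commute. Then $D_x$ has a double point $p_{x'}$ (the line $\overline{xx'}$), and the corresponding point of $\M^\st_s$ therefore lies on $\M^\st_{s,s'}$. Symmetrically, the point of $\M^\st_{s'}$ lies on $\M^\st_{s',s}$. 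Both projections of the fiber product are injective, because a point of $\M^\st_s$ determines the marked surface: the normalization in Corollary \ref{cor:normalization} is an isomorphism onto a component. Each projection also maps onto the respective boundary divisor. Indeed, a point of $\M^\st_{s,s'}$ is a pair $(C,D)$ with a double point, and the reconstructed cubic then has a second node. Its reflection must be $s'$, because the vanishing cycle is determined by which two elements of $P_s$ coalesce: those are the two elements exchanged by $s'$. So the fiber product is the graph of a bijection $\M^\st_{s,s'}\to\M^\st_{s',s}$. This bijection is algebraic, and both sides are normal (in fact smooth, being boundary divisors of the smooth $\M^\st_P$). The bijection is therefore an isomorphism once we exhibit it as induced by a bijection of labels, which is the next step.

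To identify the map, we compare labelings. At the surface $X$, the set $P_s$ maps to $C_x$ via the lines through $x$, and $P_{s'}$ maps to $C_{x'}$. The isomorphism $C_x\cong C_{x'}$ of Lemma \ref{lemma:linesonstablecubic} sends $D_x$ to $D_{x'}$. More precisely, a line $\ell_x$ through $x$ corresponds to the line $\ell_{x'}$ through $x'$ such that $\ell_x+\ell_{x'}$ is the singular residual conic $Q_P$ in a plane $P\supset\overline{xx'}$. The four simple points of $D_x$ correspond to the four elements of $(P_s)^{s'}$. We must show the induced correspondence $(P_s)^{s'}\to(P_{s'})^{s}$ is exactly $g^s_{s'}$.

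The key check is this. Two coplanar lines $\ell_x,\ell_{x'}$ with $\ell_x\cup\ell_{x'}\cup\overline{xx'}$ a plane section meet in a point. Lines on $X$ that are not coplanar in this way are disjoint. On the level of classes in $L$, using the sums $e+s(e)$, meeting versus disjointness becomes nonzero versus zero intersection number. We would verify this in the model of the proof of Lemma \ref{lemma:bijection}, where $(\alpha+2e_i)\cdot(\alpha'+2(\ell-e_1-e_j))=4\delta_{ij}$. We would first compute, for $e\in\Escr_s$ and $e'\in\Escr_{s'}$ both fixed by the other reflection, the pairing $(e+s(e))\cdot(e'+s'(e'))$. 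We would relate it to the intersection of the corresponding lines in $X$, pulled back to the resolution $\tilde X$: lines through distinct nodes that meet give a nonzero product. The $W$-equivariance of everything reduces this to the one model case, so we can invoke the explicit formula.

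Then $g^s_{s'}$ is the correspondence induced by the geometric isomorphism on the four simple points. On the double point, the unique double point of $D_x$ is the line $\overline{xx'}$, labeled by the two elements of $P_s$ swapped by $s'$, and likewise on the other side. Both extensions of $g^s_{s'}$ (the two ways to match the swapped pairs) induce the same map on $\M^\st_{s,s'}$, since the two points have the same image. This shows the graph is the one induced by either extension and lifts $(C_x,D_x)\cong(C_{x'},D_{x'})$. We expect the main obstacle to be the identification of incidence of lines through different nodes with the lattice pairing of the classes $e+s(e)$, which requires care with strict versus total transforms on $\tilde X$.
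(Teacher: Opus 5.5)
Your overall route is the paper's: work at a marked surface with nodes $x,x'$, use the description of $C_x\cong C_{x'}$ from Lemma \ref{lemma:linesonstablecubic}, and match it with $g^s_{s'}$ by turning incidence of lines into the pairing of the classes $e+s(e)$. The bookkeeping you add is fine, and the paper leaves it implicit: injectivity of both projections via Corollary \ref{cor:normalization}, the double point of $D_x$ carrying the pair swapped by $s'$, and the two extensions of $g^s_{s'}$ inducing the same map. But the step you yourself call the main obstacle is never carried out. You gesture at the right ingredient (pass to $\tilde X$), but then propose to finish by reducing to the model pair via $W$-equivariance and citing $(\alpha+2e_i)\cdot(\alpha'+2(\ell-e_1-e_j))=4\delta_{ij}$. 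That cannot work. The formula is a statement purely inside $L$, so it says nothing about which lines on $X$ meet. What is missing is the bridge between the geometry of $X$ and the lattice.

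That bridge is a short computation on $\tilde X$, valid for every $X$ with no reduction to a model; it is what the paper does. Let $m$ be a line through $x$ other than $\overline{xx'}$, let $\tilde m$ be its strict transform, and let $\alpha$ be the class of the exceptional curve over $x$. Then $\tilde m\cdot\alpha=1$, so $s(\tilde m)=\tilde m+\alpha$ is the total transform. Hence the element of $P_s$ labeling $m$ is $\tilde m+s(\tilde m)=2\tilde m+\alpha$, and likewise $2\tilde m'+\alpha'$ for a line $m'$ through $x'$. The products $\alpha\cdot\alpha'$, $\alpha\cdot\tilde m'$ and $\alpha'\cdot\tilde m$ vanish by disjointness of supports ($m'$ cannot pass through $x$ unless $m'=\overline{xx'}$). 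Therefore
\[
(2\tilde m+\alpha)\cdot(2\tilde m'+\alpha')=4\,\tilde m\cdot\tilde m',
\]
which is $4$ if $m$ and $m'$ meet (transversally, at a smooth point of $X$) and $0$ otherwise. Combined with the uniqueness of the non-perpendicular partner in Lemma \ref{lemma:bijection}, this shows the geometric correspondence on the simple points of $D_x$ is exactly $g^s_{s'}$. One further caveat: your claim that the four simple points of $D_x$ correspond to $(P_s)^{s'}$ holds only when $x,x'$ are the only nodes. The paper likewise proves only this case and leaves the rest to the reader. Since the graph is closed and the two-node locus is dense in $\M^\st_{s,s'}$, agreement with the relabeling map there extends to all of $\M^\st_{s,s'}$ by continuity.
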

 
\begin{proof}
A point of $\M^\st_{s}\times_{\M_\cub(L)}\M^\st_{s'}$  is represented by a 
marked cubic surface $X$ with  two distinct nodes $x$, $x'$ 
such that $s,s'$ are their associated  reflections in $L$. To be precise, the point is  represented by
the pair 
\[
(P_s\to C_x, P_{s'}\to  C_{x'})
\]
The images of these  maps (as zero cycles) yields the divisors $D_x$ and $D_{x'}$.  Lemma \ref{lemma:linesonstablecubic} gives  an  isomorphism  $C_x\cong C_{x'}$ which takes $D_x$ to $D_{x'}$. 
It also shows how this isomorphism comes about:  to a line $\ell$ on $X$ through $x$ which is distinct from the line $\overline xx'$ we assign the unique  line through $x'$ which meets $\ell$. We  can express this in homological terms as follows.

If $\pi :\tilde X\to X$ is a minimal resolution, then the marking of $X$ factors over an isomorphism $L\cong H_2(\tilde X)$ (see \eqref{eqn:marking}). The class of the exceptional divisor over $x$ gives a root $\alpha$ with $s=s_\alpha$.  If $\ell$ is a line through $x$ which does not contain another node, and we identify $\ell$ with the class in $H_2(\tilde X)$ of its strict transform, then 
$2\ell +\alpha\in P_s$. Likewise for $x'$: we get a root $\alpha'$ such that $s'=s_{\alpha'}$ and if $\ell'$ is a line through $x'$ which does not contain another node, then $2\ell +\alpha'$ is the corresponding element of $P_{s'}$.
The intersection numbers  $\alpha\cdot\alpha'$, $\alpha\cdot \ell'$ and $\alpha'\cdot \ell$ are zero since the representative classes have disjoint support. So $(2\ell +\alpha)\cdot (2\ell' +\alpha')=4\ell\cdot\ell'$ and this is zero unless $\ell $ and $\ell'$ meet.

This proves that the isomorphism $D_x\cong D_{x'}$ is compatible with the bijection $g^s_{s'}$. It thereby proves the  lemma in case $x$ and $x'$ are the only nodes on $X$. The general case, which involves a minor modification of this proof, is left to the reader.
\end{proof}

Corollary \ref{cor:normalization} and Lemma \ref{lemma:symmetry} enable us to reconstruct the discriminant:  

\begin{proposition}%\label{prop:}
Let $\Ds(L)$ be the quotient of the  disjoint union $\coprod_s  \M^\st_s$, where $s$ runs over all the reflections in $W$, which is obtained  by identifying  $\M^\st_{s,s'}$ with $\M^\st_{s',s}$ via~$g_{s'}^{s}$ whenever $s$ and $s'$ commute.
Then there is a natural $W$-equivariant isomorphism $\Ds(L)\cong \Ds_\cub(L)$. In particular, each $ \M^\st_s$ appears as an irreducible component of $\Ds_\cub(L)$. \hfill $\square$ 
\end{proposition}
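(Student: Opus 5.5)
The plan is to build a continuous $W$-equivariant map $\Phi:\Ds(L)\to\Ds_\cub(L)$ and then show it is a bijective finite morphism onto a normal-crossing target whose local branches match.

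\emph{Defining $\Phi$.} On each piece $\M^\st_s$, let $\Phi$ be the isomorphism onto the irreducible component of $\Ds_\cub(L)$ fixed by $s$ supplied by Corollary~\ref{cor:normalization}. Here the labeling set is taken to be $P_s=\la s\ra\bs\Escr_s$, as in the recipe described before Lemma~\ref{lemma:symmetry}: the lines through the node with reflection $s$ give the map $P_s\to C_x$. This recipe uses only the marking and the $W$-action on $\Escr$, so the resulting map $\coprod_s\M^\st_s\to\Ds_\cub(L)$ is $W$-equivariant. Lemma~\ref{lemma:symmetry} says that the fiber product of $\M^\st_s$ and $\M^\st_{s'}$ over $\Ds_\cub(L)$ is exactly the graph of the gluing $g^s_{s'}:\M^\st_{s,s'}\cong\M^\st_{s',s}$ when $s,s'$ commute. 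When $s,s'$ do not commute, Lemma~\ref{lemma:linesonstablecubic} shows the two components are disjoint. Hence $\Phi$ descends to a continuous $W$-equivariant map $\Ds(L)\to\Ds_\cub(L)$.

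\emph{Bijectivity.} Surjectivity holds because every point of $\Ds_\cub(L)$ lies on some component. For injectivity, suppose two points of $\coprod_s\M^\st_s$ have the same image. If they lie on the same piece, they coincide, because each piece embeds. If they lie on pieces $s\neq s'$, the image is a marked surface with distinct nodes $x,x'$ whose reflections are $s,s'$. These reflections commute, so the two points are identified by $g^s_{s'}$ via Lemma~\ref{lemma:symmetry}. With three or four nodes, the identifications are pairwise, so the equivalence relation generated by the gluings is exactly ``same image''. To close this up one must check a cocycle condition, namely that $g^{s'}_{s''}\circ g^s_{s'}=g^s_{s''}$ on triple overlaps. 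I would verify this in one model case, as in the proof of Lemma~\ref{lemma:bijection}, using the transitivity of $W$ on commuting triples of reflections, which are of type $3A_1$.

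\emph{Isomorphism.} $\Ds(L)$ is a compact Hausdorff quotient, being a finite union glued along closed subvarieties, so $\Phi$ is a homeomorphism. To upgrade this to an isomorphism of the natural analytic and stratified structures, note that $\Ds_\cub(L)$ is a simple normal crossing divisor. Its components are therefore smooth, and they are the images of the $\M^\st_s$. Locally at a point with $k$ nodes, $\Ds_\cub(L)$ is the union of $k$ coordinate hyperplanes, and the pairwise intersections are the images of the $\M^\st_{s,s'}$. This is precisely the local structure of the gluing. The final assertion, that each $\M^\st_s$ is an irreducible component, is then immediate.

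The main obstacle is the multi-node case left to the reader in Lemma~\ref{lemma:symmetry}. One must check that the bijections $g^s_{s'}$ remain consistent when the lines $\overline{xx'}$ and doubled points of $D_x$ appear, and that the cocycle condition holds at the Cayley points where four components meet. I would handle this by an explicit lattice computation for one $4A_1$ configuration, using the Cayley surface remark for geometric confirmation.
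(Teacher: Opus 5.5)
Your route is the paper's. The proposition is stated there as a direct consequence of Corollary~\ref{cor:normalization} (each $\M^\st_s$ maps isomorphically onto a component of $\Ds_\cub(L)$) and Lemma~\ref{lemma:symmetry} (the fiber product of two components is the graph of the gluing), and your first two paragraphs unpack exactly that.

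\textbf{The compactness step is wrong as written.} $\Ds(L)$ is not compact. For $|P_s|=6$ the space $\M^\st_s$ is $\M^\sst_{P_s}$ with its $10$ strictly semistable points removed. Likewise $\Ds_\cub(L)$ misses the $40$ points of $\M^\sst_\cub(L)$ over $\infty$, and it accumulates at them. So the argument ``continuous bijection from compact to Hausdorff'' is not available. Use properness instead. The map $\coprod_s\M^\st_s\to\Ds_\cub(L)$ is a finite morphism, since it is a finite union of closed embeddings, and hence it is a closed map. The quotient map $\coprod_s\M^\st_s\to\Ds(L)$ is surjective, so the induced continuous bijection $\Ds(L)\to\Ds_\cub(L)$ is also closed, and therefore a homeomorphism. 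Hausdorffness of $\Ds(L)$ then comes for free, because $\Ds(L)$ injects continuously into a Hausdorff space.

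\textbf{The cocycle verification is superfluous.} Lemma~\ref{lemma:symmetry} says that the graph of the gluing $\M^\st_{s,s'}\cong\M^\st_{s',s}$ \emph{is} $\M^\st_s\times_{\Ds_\cub(L)}\M^\st_{s'}$. So the generating relation is already ``same image in $\Ds_\cub(L)$'', which is an equivalence relation. Compatibility of the gluing isomorphisms on triple and quadruple overlaps is therefore automatic, and no $3A_1$ or $4A_1$ lattice computation is needed; your sentence just before the cocycle remark already finishes injectivity. The only genuinely unchecked input is the one you name at the end: the case of Lemma~\ref{lemma:symmetry} with more than two nodes, which the paper itself leaves to the reader.
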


We shall denote the common image of $\M^\st_{s,s'}$ and $\M^\st_{s',s}$ in $\Ds_\cub(L)$ by $\M^\st_{\{s,s'\}}$.
More generally, 
if $S\subset W$ is a set a pairwise commuting reflections in $W$, then we put  $\M^\st_S:=\cap_{s\in S} \M^\st_s$ with  the intersection taken in $\Ds_\cub(L)$.  If $s\in S$, then $S\subset C_W(s)$ and  $P_{s}$  and hence each $s'\in S\ssm\{s\}$  determines a transposition in $P_s$. These will commute (hence there are at most three such, so that $S$ must have size $\le 4$) and  
define a  boundary stratum of $\M^\st_s$ which is identified with $\M^\st_S$.

\begin{remark}%\label{rem:}
We can obtain in the same manner all of the Naruki boundary $\overline \Ds_\cub(L)$ in terms of $W$ (by adding 40 copies of $(\PP^1)^3$ in a $W$-equivariant manner).  Our description of the Naruki resolution   also shows that
the  open embedding $\M^\st_{[6]}\hookrightarrow\Ds_\cub$ extends to a morphism from one resolution to another, namely from the Deligne-Mumford compactification  of  $\M_{[6]}$ to $\widehat  \M_\cub$.
\end{remark}

\subsection{Reconstruction of $\p\M_\cub^+(L)$}
We  shall see that Part (ii) of Proposition \ref{prop:normal bundle identification}  
enables us to construct a $W$-space $\N(L)$ and a 
$W$-equivariant map $\N(L)\to \Ds_\cub(L)$ which reproduces  $\p\M_\cub^+(L)\to \Ds_\cub(L)$. 

Let $s\in W$ be reflection.  Part (i) of Proposition \ref{prop:normal bundle identification} implies that  the normal bundle of the embedding $j_s: \M_s\hookrightarrow  \M_\cub(L)$
is naturally isomorphic to $\Ls^\st_{P_s}$ (which we will from now on  abbreviate by  $\Ls_s^\st$). 
The normal crossing divisor $\Ds_s$ in $\M_s^\st$ determines a  real oriented blowup 
$\M^+_s\to \M_s^\st$ whose total space $\M^+_s$ is a manifold with boundary having  $\M_s$ as its interior.
Let  $\Rs ay(\Ls_s^\st)\to \M^\st_s$ denote the (circle) bundle of rays in $\Ls_s^\st$ and write $\N_s\to \M^+_s$
for its pull-back along $\M^+_s\to \M_s^\st$, giving a fiber square
\[
\begin{tikzcd}
\N_s\rar\dar & \Rs ay(\Ls_s^\st)\dar  \\
\M^+_s\rar  &\M^\st_s
\end{tikzcd}
\]
Note that $\N_s$ is a manifold with corners. We shall use these $\N_s$  as building blocks to construct  a $W$-space  which yields a copy of $\p\M_\cub^+(L)$. 

Let   $s'$ be  a reflection in $C_W(s)\ssm \{s'\}$.
We claim that the three 2-torus bundles  over $\M^\st_{\{s,s'\}}$ defined by  the restrictions  to $\M^\st_{s,s'}$ of  the maps  $\M^+_\cub \to \M^\st_\cub$,  $\N_s\to \M^+_s$ and  $\N_{s'}\to \M^+_{s'}$  are canonically isomorphic. 
To see this, first note that since  $ \M^\st_s$ and  $\M^\st_{s'}$ intersect normally, the normal bundle of $\M^\st_{s'}$ restricted to $\M^\st_{\{s,s'\}}$ is the tangent bundle of $\M^\st_{s'}$ restricted 
to $\M^\st_{\{s,s'\}}$. The assertion then follows from   Proposition \ref{prop:normal bundle identification}-(ii), which implies 
that   the latter  is canonically isomorphic to the restriction of $\Ls^\st_s$ to $\M^\st_{\{s,s'\}}$. 

By identifying
the restrictions of $\N_s$ and $\N_{s'}$ to $\M^\st_{\{s,s'\}}$ for all such commuting pairs of reflections, we obtain a manifold with corners, which we shall denote by $\N (L)$. It comes with a $W$-action and we denote its $W$-orbit space by $\N$. 
This construction was devised for make the following to hold.

\begin{corollary}\label{cor:boundary with corners}
We have a natural  isomorphism of $\N(L)$ with $\p\M_\cub^+$ as manifolds with corners with $W$-action.

In particular, the fundamental group of $\N(L)$ can be  identified with that $\M_\cub(L)$ and 
the orbifold fundamental group of its $W$-orbit space 
$\N$ with  that of  the moduli space of cubic surfaces $\M_\cub$.  \hfill $\square$
\end{corollary}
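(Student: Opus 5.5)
The natural isomorphism will be produced stratum by stratum, using the fact that $\p\M_\cub^+(L)\to\Ds_\cub(L)$ is, over the complex codimension-$k$ stratum of the normal crossing divisor $\Ds_\cub(L)$, the real $k$-torus bundle of rays in the normal directions of the $k$ local branches. I then check that the gluings defining $\N(L)$ are exactly the ones forced there.

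\textbf{Step 1: one component at a time.} Fix a reflection $s$ and the component $\M^\st_s\subset\Ds_\cub(L)$, identified with it by Corollary~\ref{cor:normalization} and the reconstruction proposition. The part of $\p\M_\cub^+(L)$ lying over the wall indexed by $s$ is obtained from the circle bundle of rays in the normal bundle of $\M^\st_s$. This circle bundle is then pulled back along the real oriented blowup of the divisor traced on $\M^\st_s$ by the other components; that blowup supplies the remaining torus factors at corners. By Proposition~\ref{prop:normal bundle identification}(i) the normal bundle is canonically $\Ls_s^\st$. By the reconstruction proposition, the trace of the other components on $\M^\st_s$ is $\bigcup_{s'}\M^\st_{s,s'}$, which is precisely the stable discriminant $\Ds_s$. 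Hence the closure of that wall is canonically $\N_s$, as a manifold with corners, compatibly with the local model $(r_1u_1,\dots,r_ku_k,z_{k+1},\dots)$. These identifications are natural, since everything was built from $s$ and $W$ alone, so they are $C_W(s)$-equivariant and, collectively, $W$-equivariant.

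\textbf{Step 2: compatibility over double and higher intersections.} Over $\M^\st_{\{s,s'\}}$, the walls for $s$ and $s'$ meet in a $2$-torus bundle, namely rays in $N_s\oplus N_{s'}$. In $\N_s$ this appears as rays of $\Ls^\st_s$ times rays of the normal bundle of $\M^\st_{s,s'}$ in $\M^\st_s$. That normal bundle is $N_{s'}|$, by normal crossing, and this is $\Ls^\st_{s'}|$ by Proposition~\ref{prop:normal bundle identification}(ii), through the identification of $\M^\st_{s,s'}$ with $\M^\st_{s',s}$ via $g^s_{s'}$ (Lemma~\ref{lemma:symmetry}). So both walls present this corner as rays of $\Ls_s\oplus\Ls_{s'}$, which is the gluing used in forming $\N(L)$.

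The step I expect to be the main obstacle is the \emph{canonical} nature of this matching. One must check that the isomorphism $\Ls_{D}\cong T_xC$ of part (ii), computed on $\M^\st_s$, really agrees with the geometric identification of the normal direction of $\M^\st_{s'}$ inside $\M^\st_\cub(L)$. Agreement up to a positive real scalar suffices for ray bundles, and I would first try to verify it in a local model near a two-nodal surface. I would also check that for $|S|=3,4$ the pairwise identifications are compatible, giving a cocycle condition. Granting this, the map $\coprod_s\N_s\to\p\M^+_\cub(L)$ factors through a continuous $W$-equivariant bijection $\N(L)\to\p\M^+_\cub(L)$, and it is a local isomorphism of manifolds with corners, hence an isomorphism.

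\textbf{Step 3: the consequence for fundamental groups.} Pass to $W$-orbits to get $\N\cong\p\M^+_\cub$. Then apply Theorem~\ref{thm:nearthediscriminant}: the inclusions $\p\M^+_\cub(L)\subset\M^+_\cub(L)\supset\M_\cub(L)$ and their orbifold quotients induce isomorphisms on (orbifold) fundamental groups. This gives the identification of $\pi_1(\N(L))$ with $\pi_1(\M_\cub(L))$, and of $\pi_1^\orb(\N)$ with $\pi_1^\orb(\M_\cub)$.
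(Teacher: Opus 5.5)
Your proposal takes the paper's route. The paper constructs $\N(L)$ precisely so that each $\N_s$ is the closure of the wall of $s$ (via Proposition~\ref{prop:normal bundle identification}(i) and the local model of the real oriented blowup), and so that the gluing over $\M^\st_{\{s,s'\}}$ matches the $2$-torus bundle of $\p\M^+_\cub(L)$ (via normal crossing and Proposition~\ref{prop:normal bundle identification}(ii)); the isomorphism then holds by construction, and the fundamental-group statement is Theorem~\ref{thm:nearthediscriminant}. The compatibility you flag is asserted in the paper on the strength of part (ii) alone; for the corollary itself you can sidestep it, and the cocycle check, by taking the gluing maps to be those induced by the embeddings of the wall closures into $\p\M^+_\cub(L)$, since the intrinsic check only matters for the claim that $\N(L)$ is described purely in terms of $W$.
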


\subsection{The homotopy type of a nerve}
If $S$ is a finite set of 
reflections in $W$, then  
\[
\N_S:=\cap_{s\in S} \N_s
\]
is nonempty if and only if the members of $S$ pairwise 
commute;  in that case this intersection is for every $s\in S$  a connected codimension $|S|$ corner of 
$\N_s^\st$. The  relative interiors $\mathring \N_S$  of  such $\N_S$  thus decompose $\N(L)$ into a  simple 
stratified space (into normally crossing submanifolds). 

When we regard $\{\N_s\}_s$ as a (closed) covering of $\N$, then its nerve  is the simplicial complex 
$K$ (with $W$-action) whose vertices  are the reflections in $W$ and for which a finite set $S$ of such reflections 
spans a simplex  $\sigma_S$ if and only the elements of $S$  pairwise commute. We already noted that the maximal size $S$ is $4$ and so 
$\dim K=3$. Two such commuting sets $S$, $S'$  or reflections of the same size $k\le 4$ form a single  $W$-orbit and hence $W$ 
acts transitively on the $(k-1)$-simplices. More is true: any two flags of simplices of the same  
termwise dimension are $W$-equivalent and so $W$ acts even transitively on the vertices of the same type of the barycentric subdivision $bK$ of $ K$.  This implies that any $3$-simplex of $|bK|$ is a strict fundamental domain of 
the $W$-action on $| K|$. 

Although this is not needed for the proof of our main theorem, we show that the  homotopy type of $ K$  is quite simple.
%In addition, the $W$-stabilizer of the corner $\mathring\N_S$ maps onto the full permutation group of $S$. In other words,
%$N_W(S)/C_W(S)=\Sfrak_S$.

\begin{proposition}\label{prop:bouquet}
The  homotopy type of $| K|$ is  a bouquet of $170$ two-spheres. 
\end{proposition}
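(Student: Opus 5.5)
The plan is to first collapse the $3$-simplices of $K$ away, then show that $K$ is simply connected, and finally read off the number of spheres from the Euler characteristic. The whole argument uses only the combinatorics of commuting reflections in $W$.

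\emph{Step 1: face counts.} We count simplices of $K$ by $W$-orbits, using the fact noted above that $C_W(s)$ acts on $P_s$ as $\Sfrak_6$ with the reflections of $C_W(s)\ssm\{s\}$ acting as transpositions.
\begin{itemize}
\item There are $36$ vertices.
\item A reflection $s$ commutes with $15$ others (the transpositions of $\Sfrak_6$), giving $36\cdot 15/2=270$ edges.
\item Commuting pairs of transpositions in $\Sfrak_6$ number $45$, giving $36\cdot45/3=540$ triangles.
\item Commuting triples of transpositions in $\Sfrak_6$ number $15$, giving $36\cdot 15/4=135$ tetrahedra.
\end{itemize}
Hence $\chi(|K|)=36-270+540-135=171=1+170$. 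Thus, once $|K|$ is known to be homotopy equivalent to a bouquet of $2$-spheres, there must be exactly $170$ of them.

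\emph{Step 2: removing the $3$-cells.} Since $W$ is transitive on triangles and on tetrahedra, every triangle lies in the same number of tetrahedra, namely $4\cdot 135/540=1$. Equivalently, an $A_1^3$-subsystem of $E_6$ extends uniquely to a $4A_1$. So every tetrahedron has a free face. Performing one elementary collapse per tetrahedron, through a chosen face of it, collapses $|K|$ onto a $2$-dimensional subcomplex $K'$ with $|K|\simeq |K'|$. In particular $H_3(|K|)=0$.

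\emph{Step 3: simple connectivity.} This is the step I expect to be the main obstacle. The $1$-skeleton is the commuting graph of the $36$ reflections, which I will show has diameter $2$.
\begin{itemize}
\item If $s,t$ do not commute, their roots span an $A_2$, whose orthogonal complement in $E_6$ is of type $A_2\times A_2$. So $s$ and $t$ have common neighbours.
\item Moreover, the set of common neighbours is connected in the commuting graph, since roots in different $A_2$ factors are orthogonal.
\end{itemize}
Using a base vertex $s$ and geodesics of length $\le2$ from it, the standard argument reduces every edge loop to loops of length at most $5$. It then suffices to show that each such short loop bounds in the $2$-skeleton. For triangles this is automatic because $K$ is a flag complex. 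For squares and pentagons I would use the transitivity of $W$ on flags to reduce to finitely many configurations. In each configuration I would exhibit a reflection commuting with all vertices of the loop, so that the loop is coned off, or else split the loop by a diagonal. For example, for a square $s$--$a$--$t$--$b$ I would first try a reflection in the $A_2\times A_2$ orthogonal to $\langle s,t\rangle$, or respectively in the $D_4$-type centralizer when $s$ and $t$ commute. If this case analysis becomes unwieldy, the alternative is to show that vertex links are simply connected and combine this with a van Kampen argument. The link of $s$ is the matching complex of $K_6$, which is known to be a bouquet of $2$-spheres.

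\emph{Step 4: conclusion.} A simply connected $2$-complex has free $H_2$, equal to $\pi_2$ by Hurewicz, and is homotopy equivalent to a bouquet of $2$-spheres: map such a bouquet in representing a basis of $H_2$, and the map is a homology isomorphism between simply connected complexes, hence a homotopy equivalence by Whitehead's theorem. Since $\chi=171$ by Step 1, the bouquet has $170$ spheres.
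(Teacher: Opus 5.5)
\noindent Your Steps 1, 2 and 4 are correct. The counts $36,270,540,135$ give $\chi=171$. Each triangle lies in exactly one tetrahedron, so the $3$-simplices collapse away. A simply connected finite $2$-complex is a bouquet of $\chi-1$ two-spheres.

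The gap is Step 3. It is only a plan, and the plan as stated fails. Your reduction to edge cycles of length $\le 5$ is fine, and so is your treatment of triangles (flag complex). Chordless squares $s$--$a$--$t$--$b$ are also fine: $a,b$ lie in one $A_2$ factor of the $A_2\times A_2$ orthogonal to the roots of $s,t$, so any reflection of the other factor cones the square off.

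A chordless pentagon, however, has no diagonal by definition, and it need not have any reflection commuting with all five vertices. For instance, the reflections in
\[
\ell-e_1-e_3-e_4,\quad e_1-e_3,\quad 2\ell-e_1-\cdots-e_6,\quad e_1-e_2,\quad \ell-e_1-e_2-e_5
\]
form, in this cyclic order, an induced $5$-cycle of the commuting graph. A root orthogonal to the middle three is $\pm(e_p-e_q)$ with $\{p,q\}\subset\{4,5,6\}$. Orthogonality to $\ell-e_1-e_3-e_4$ then forces $\{p,q\}=\{5,6\}$, which is not orthogonal to $\ell-e_1-e_2-e_5$.

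Your fallback does not rescue this. The link of a vertex is indeed the matching complex of the complete graph on $6$ vertices. But every edge of it lies in exactly one triangle, so it collapses to a connected graph with $15$ vertices and $30$ edges. It is therefore a bouquet of $16$ circles, not of $2$-spheres. In any case, simply connected vertex links would not imply global simple connectivity, as any triangulation of $\RR\PP^3$ shows.

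The missing step is easy to supply. In a chordless pentagon $v_0v_1v_2v_3v_4$, the commuting pair $v_2,v_3$ and the vertex $v_0$ have a common neighbour $c$. To see this, move $v_2$ to the reflection in $2\ell-\sum_i e_i$ and $v_3$ to the reflection in $e_1-e_2$. Then $v_0$ is the reflection in $\ell-e_i-e_j-e_k$ with exactly one of $1,2$ in $\{i,j,k\}$. Take $c$ to be the reflection in $e_p-e_q$ with $\{p,q\}=\{i,j,k\}\ssm\{1,2\}$. Since $c\neq v_1,v_4$, the pentagon splits into the triangle $cv_2v_3$ and the squares $v_0v_1v_2c$ and $v_0cv_3v_4$, which are already handled. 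With this addition your route is complete.

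Your route differs genuinely from the paper's. The paper never proves simple connectivity separately. It builds $|K|$ from the contractible star of one reflection $s_o$, adding simplices according to how many of their vertices lie among the $20$ reflections not commuting with $s_o$. Up to homotopy, this attaches:
\begin{itemize}
\item $20\cdot 4$ two-cells (cones on links that are joins of two $3$-point sets);
\item then nothing (tetrahedra glued along two faces);
\item then $30\cdot 3$ two-cells ($30$ tetrahedra glued along their $1$-skeleta).
\end{itemize}
That yields the bouquet structure and the number $170$ in one pass. Your Euler-characteristic count gets $170$ for free, but it needs the $\pi_1$ check and Whitehead's theorem.
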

\begin{proof}
Let $s_o$ be the reflection in the root $2\ell-(e_1+\cdots +e_6)$. All other reflections
in $W$ are indexed by subsets of $ \{1, \dots, 6\}$ of size $2$ or $3$: 
if $A$ has size $2$: $A=\{i,j\}$, then  $s_A$ is the reflection in $e_i-e_j$ and if $B$ has size $3$, then $s_B$ is the reflection in $\ell-\sum_{i\in B} e_i$.
The commutation relations can be expressed in these terms:
the reflections $s_A$ make up the reflections  of $W\ssm \{s_o\}$ which commute with $s_o$,  
two reflections $s_A$ and $s_{A'}$ commute if and only if $A\cap A'=\emptyset$, a reflection 
$s_A$ commutes with $s_B$ if and only if $A\subset B$ or $A\cap B=\emptyset$ and $s_B$ commutes with $s_{B'}$
if and only if $B\cap B'$ is a singleton.

We write $ K_2$ resp.\  $ K_3$  for the subcomplex of $ K$ spanned by the $s_A$'s resp.\ $s_B$'s.
Note that  $ K_2$ is the link of $s_o$  in $ K$  and hence the star of $s_o$ in $ K$  is the cone over $ K_2$. 
    Let $K^{(i)}$ be the subcomplex of~$K$ consisting of the simplices which
    have at most~$i$ vertices outside of~$\Star(s_o)$.  So
\[
    \Star(s_o)= K^{(0)}\subset  K^{(1)}\subset  K^{(2)}\subset  K^{(3)}\subset K^{(4)}= K.
\]
    We examine how each $|K^{(i+1)}|$ differs from its predecessor.
    Each component of $|K^{(i+1)}|-|K^{(i)}|$ contains the relative interior of a unique 
    $i$-simplex of~$K_3$, so may treat each of these $i$-simplices 
    separately.

    Passing from $|K^{(0)}|$
    to $|K^{(1)}|$ amounts to attaching, for each vertex $s_B$ of~$K_3$, the cone on
    the link of~$s_B$ in~$K^{(0)}$.  
This link  is the full subcomplex of~$K_2$ spanned by the $s_A$ with $A\subset B$ or 
$A\cap  B=\emptyset$, which is the join of two $3$-element sets. 
    Its homotopy type is a bouquet of four circles, and so adjoining the cone
    over it attaches $4$ two-cells (up to homotopy). 
    Since $K_3$ has $\binom{6}{3}=20$ vertices, 
    $|K^{(1)}|$ is homotopy equivalent to $|\Star(s_o)|$ with $4.20=80$ two-cells attached.

    Passing from $|K^{(1)}|$ to $|K^{(2)}|$ amounts to adjoining, for each
    edge $\overline{s_Bs_{B'}}$ in~$K_3$, the join of that edge with its link in~$K^{(0)}$.  
    One can check that the link is the edge
$\overline{s_{B\setminus B'}s_{B'\setminus B}}$ of~$K_2$.  The join is a tetrahedron,
    and its subcomplex lying in $K^{(1)}$ is the union of the two faces got by
    omitting $s_B$ resp.\ $s_{B'}$.  Therefore, attaching this tetrahedron does
    not change the homotopy type.  Repeating the argument for the 
    other edges of~$K_3$ shows that $|K^{(1)}|\to|K^{(2)}|$ is a homotopy equivalence.

    Finally we enlarge $|K^{(2)}|$ to $|K^{(4)}|$.  One can check that
    the triangles in $K_3$ are all $\Sfrak_6$-equivalent, for example 
    $\langle s_{123},s_{345},s_{561}\rangle$, and that each lies in a unique
    tetrahedron in~$K$, in this example got by adjoining $s_{246}$.  The example
    shows that the tetrahedron also lies in~$K_3$.  
    There are a total of~$30$ such tetrahedra.
    Therefore,
    $|K^{(4)}|$ is got from $|K^{(2)}|$ by attaching~$30$ tetrahedra along
    their $1$-skeleta.  
   Up to homotopy,
    this is equivalent to attaching the cones on these $1$-skeleta.
    Each $1$-skeleton has
    the homotopy type of a bouquet of $3$~circles, so  up to homotopy
    $|K^{(4)}|$ is got from $|K^{(2)}|$ by attaching $30.3=90$ two-cells.  
    Up to homotopy, we have built~$|K|$ by attaching $80+90=170$
    two-cells to the contractible subspace $|\Star(s_o)|$.
\end{proof}

\begin{remark}\label{rem:bouquet}
Proposition \ref{prop:bouquet}  implies that the $\ZZ W$-module $H_2( K)$ is the homology of the augmented simplicial chain complex of $\ZZ W$-modules
\[
0\to C_3( K)\to C_2( K)\to C_1( K)\to C_0( K)\to \ZZ\to 0
\]
(so with the trivial $\ZZ W$-module $\ZZ$ put in degree $-1$). The term $C_i( K)$ has as a $\ZZ W$-module a single generator: if $W_i\subset W$ is the stabilizer of an $i$-simplex, 
then $C_i( K)\cong \Ind^{W}_{W_i}\chi_i$,  where  $\chi$ is 
the sign character of the action of $W_i$ on the vertices of that simplex.
This allows us to determine $H_2( K)$ as a virtual $\ZZ W$-module.
\end{remark}

\section{The Clebsch surface and the Sylvester family}
In order to determine the  (orbifold)  fundamental group of $\M_\cub^+$, we need of course a base point. A smart choice is  a base point which comes with maximal symmetry. One such  is furnished by the point representing the Clebsch cubic, but this point does not lie on $\p\M_\cub^+$. The desire to reconcile these conflicting demands  motivates what follows.

\subsection{The Clebsch diagonal surface}\label{subsect:clebsch}
We fix  a 5-element set $\Vcal$ and write $\PP_\Vcal$ for the  4-dimensional projective space $\PP(\CC^\Vcal)$ and $\PP_\Vcal(\RR)$ for its real part. The Clebsch surface $X_C\subset \PP_\Vcal$ is defined by the two equations 
\[
\textstyle \quad  \sum_{i\in \Vcal} z_i=0; 
\quad \sum_{i\in \Vcal} z_i^3=0. 
\]
So it is a cubic surface in a hyperplane in $\PP_\Vcal$ and  comes with an evident action of the permutation group $\Sfrak_\Vcal$.  It is easily checked that $X_C$ is smooth. It is one of the  gems of 19th century algebraic geometry whose rich history  continues to this day.
Half a century ago, Hirzebruch \cite{hirz:clebsch} showed that it isomorphic with  a  Hilbert modular surface associated with  an order in $\QQ(\!\sqrt{5})$. We refer the reader  to this paper for some of its other interesting properties.
\smallskip

The  lattice $L$ we  fixed at the beginning is unique up to isomorphism. This gives us the freedom to make one particular choice. We use that liberty by stipulating that from now on, 
\[
L:= H_2(X_C).
\] 
So $W$ now appears as the group of orthogonal transformations of $H_2(X_C)$ which preserve the  anticanonical class of $X_C$.  Any nontrivial  automorphism  of a cubic surface 
acts nontrivially on its Picard group and so  the group $\Sfrak_\Vcal$ embeds in $W$. 
Our first business is  to determine  the image of this embedding and  the  lines on $X_C$.

We first note that the intersection of ${X_C}$ with a coordinate hyperplane $z_i=0$  consists of three  lines: for every splitting of $\Vcal\ssm \{i\}$ in two $2$-element subsets $\{J,K\}$ we get a  line 
defined by $(z_i=0, \sum_{j\in J} z_j=0, \sum_{k\in K} z_k=0$). So that gives us  a $\Sfrak_\Vcal$-orbit consisting of $5.3=15$ lines, 
all defined over $\RR$. 

A line on ${X_C}$ not of this form can be given  by a bijection $\varphi :\Vcal\cong \mu_5$ it is then the line $E_\varphi$ spanned by the point $x_\varphi\in {X_C}$ with homogeneous coordinates 
by $[\{\varphi(i)\}_{i\in \Vcal}]$ and its complex conjugate $\bar x_\varphi=x_{\varphi^4}$, given by $[\{\bar\varphi(i)\}_{i\in \Vcal}]$ (read $\varphi^k$ as the map $i\in \Vcal\mapsto \varphi(i)^k\in \mu_5$; this is a bijection when $k$ is not divisible by $5$).
Another bijection $\varphi'$ defines the same line  as $\varphi$ if  and only if $\varphi'$ is obtained from $\varphi$ or its complex conjugate $\bar\varphi$ by multiplication  with an element of $\mu_5$. So  this gives us 
$(5!/2)/5=12$ lines,  also  defined over $\RR$ and as $15+12=27$, there will be no others. These $12$ lines  form a single  $\Sfrak_\Vcal$-orbit  (which we shall denote $\Escr_C$), but decompose into two orbits for the alternating group $\Sfrak^+_\Vcal$. We shall denote these $\Sfrak^+_\Vcal$-orbits by  $\Escr^+_C$  and $\Escr^-_C$, although at this point  neither  of them is privileged. 
They make up a double six, whose $W$-stabilizer $\Sfrak_6\times\ZZ/2$
has central involution~$s_C$ sending each $E_\varphi=E_{\varphi^4}$ to
$E_{\varphi^2}=E_{\varphi^3}$.

Since the  $\Sfrak_\Vcal$-stabilizer of $\Escr^+_C$ is  $\Sfrak_\Vcal^+$, this means that if  we blow down the lines indexed by  $\Escr_C^+$, 
we get a projective plane with a $6$-element subset having $\Sfrak_\Vcal^+$ 
as its group of symmetries. 
There is only one such $6$-element subset of $\PP^2$  up to projective equivalence, namely the 6 points in 
the real projective plane $\PP^2(\RR)=\Pop(\RR^3)$ defined by the  lines though opposite vertices  of a  regular icosahedron $\Is \subset \RR^3$ centered at the origin. 
Let us identify ${X_C}$ with the blowup of $\PP^2$ in these six points (which are now labeled by the elements of $\Escr_C^+$), in other words, let us 
identify $\Escr_C^+$  with the set of antipodal vertex pairs of $\Is$ (or equivalently, the lines spanned by such pairs). So if $\ell\in \pic(S)$ denotes  the line class of $\PP^2$ pulled back to ${X_C}$, then $\{\ell\}\cup\Escr_C^+$ gives a basis of $L$.

The lines on $X_C$ can now be labeled nicely in terms of the combinatorial structure of the regular icosahedron $\Is$. 
An antipodal pair of edges  of $\Is$  defines a $2$-element subset of $\Escr_C^+$. Such a pair defines a reflection in $W$ (the reflection vector is the difference of the two exceptional  classes). 
This pair spans a plane in $\RR^3$  and determines a line in $\Pop(\RR^3)$.  The complexification of the strict transform of such a line yields  a line on ${X_C}$ and so this gives us 
15 lines on ${X_C}$. This is an $\aut({X_C})$-orbit of lines and hence these are the 15 lines obtained by hyperplane sections. 

This also enables us to recover $\Vcal$ in  terms of $\Is$: every antipodal pair of edges 
yields a line on ${X_C}$ which lies on a unique coordinate hyperplane section $z_i=0$ for some $i\in \Vcal$ and thus determines an element $i\in \Vcal$. Since $z_i=0$ meets $X_C$ in precisely 
 three such  lines, we thus have associated with $i\in\Vcal$ three antipodal pairs of edges of $\Is$. These $6$ edges lie on a unique cube which circumscribes $\Is$ (see the figure in \S 2 of \cite{hirz:clebsch}) and so this  identifies $\Vcal$ with the set of five circumscribed cubes of $\Is$.

An antipodal pair of  facets of $\Is$ defines a $3$-element subset  $\triangle\subset\Escr_C^+$. 
Such a  $3$-element subset defines a root in $\pic({X_C})$, namely $\alpha_\triangle:=\ell-\sum_{e\in \triangle} e$. 
So we have $20/2=10$ of those. The associated reflections $s_\triangle$ determine a set 
$\Ps$ of reflections in $H_2(X_C)$.
Two such reflections commute if and only if the associated facets have no edge in common. 
Hence every reflection in $\Ps$ fails to commute with precisely three 
of the other nine 
(and then generates with each of them a reflection subgroup of type $A_2$). It follows that 
the `Coxeter diagram' with vertex set  $\Ps$ is the Petersen graph. Note that this realizes $\aut({X_C})$ as the automorphism group of the graph $\Ps$. 

Since  no two of the three planes spanned by  the three edges of  $\triangle$ are perpendicular, $\triangle$ defines a 3-element subset of 
$\Vcal$. There are $\binom{5}{3}=10$ such and hence we also get an identification of $\Ps$
with the collection of 3-element subsets of $\Vcal$. If $I\subset \Vcal$ is a $3$-element subset, then
we denote by $s_I$ the reflection  in $W$ associated with $I$. 

Part of this discussion can be summed up as follows.

\begin{corollary}\label{cor:choice of six}
There are four $\Sfrak_\Vcal^+$-orbits in the set of reflections in $W$ (which has size 36): the singleton $\{s_C\}$, the $10$ reflections defined by the antipodal pairs of faces of regular icosahedron $\Is$,
their 10 conjugates   under $s_C$ and the  transpositions in $\Escr_C^+$ (this set has size $15$). 
The two $\Sfrak_\Vcal^+$-orbits of size $10$ make up a single  $\Sfrak_\Vcal$-orbit (giving us three 
$\Sfrak_\Vcal$-orbits).
\hfill $\square$
\end{corollary}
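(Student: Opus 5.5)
The plan is to work entirely inside $L=H_2(X_C)$ with the basis $\ell$ together with $\Escr_C^+=\{e_1,\dots,e_6\}$, the six antipodal vertex pairs of $\Is$. In this basis the $36$ reflections of $W$ are: $s_o$, the reflection in $2\ell-\sum e_i$; the $15$ reflections $s_A$ in $e_i-e_j$; and the $20$ reflections $s_B$ in $\ell-\sum_{i\in B}e_i$ with $|B|=3$. The action of $\Sfrak_\Vcal^+$ on $L$ is known: it is the icosahedral rotation group acting on the six axes and fixing $\ell$. It therefore acts on these reflections through the induced permutation action on $\{1,\dots,6\}$. So the orbit decomposition reduces to a combinatorial question about the transitive action of $A_5\subset\Sfrak_6$ on subsets of the six axes.

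The count then goes as follows. First, $s_o$ is fixed, since $\ell$ and $\sum e_i$ are. Second, $A_5$ acts on the $15$ pairs of axes. These pairs correspond to antipodal edge pairs of $\Is$, since any two distinct axes of the icosahedron are adjacent, and the rotation group is transitive on the $15$ antipodal edge pairs. So the $s_A$ form one orbit of size $15$; these are the transpositions of $\Escr_C^+$. Third, a $3$-subset $B$ of axes either spans an antipodal pair of faces (there are $10$ such) or does not. The complementary $3$-subset of a face triple is not a face triple, because each axis lies in exactly $5$ face-pairs and a counting check shows complements of the $10$ face triples give the other $10$. The rotation group is transitive on face pairs and hence on their complements, so the $20$ reflections $s_B$ split into two orbits of size $10$. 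Checking $15+1+10+10=36$ confirms there are exactly four orbits.

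To identify the second $10$-orbit with the $s_C$-conjugates of the first, I will use the description of $s_C$ as the central involution of the stabilizer of the double six $\Escr_C$, which swaps $\Escr_C^+$ and $\Escr_C^-$. Since both $s_o$ and $s_C$ commute with the $\Sfrak_6$ permuting $\Escr_C^+$ and exchange the two halves, I expect to show $s_C=s_o$ in this basis. Conjugating $s_B$ by $s_o$ gives the reflection in $s_o(\ell-\sum_B e_i)$, and a direct computation gives $\ell-\sum_{i\notin B}e_i$, that is, the complement $s_{B^c}$. This carries the face orbit exactly onto the complementary orbit.

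Finally, $\Sfrak_\Vcal$ is generated by $\Sfrak_\Vcal^+$ together with an odd element, and an odd element swaps $\Escr_C^+$ and $\Escr_C^-$ (these are the $\Sfrak_\Vcal^+$-orbits of the twelve lines forming one $\Sfrak_\Vcal$-orbit). So an odd element acts as $s_C$ composed with an element of $C_W(s_C)$ preserving the halves. Such an element normalizes the set of face triples only up to complement, so it must interchange the two $10$-orbits while preserving $\{s_C\}$ and the $15$ transpositions. This yields three $\Sfrak_\Vcal$-orbits. The main obstacle is this last step: making precise that an odd permutation really exchanges face triples with their complements, rather than preserving them. I would settle it by a direct check with one transposition of $\Vcal$ acting on $X_C$, or more cleanly by noting that otherwise $\Ps$ would not be $\Sfrak_\Vcal$-invariant with $\Aut(\Ps)=\Sfrak_\Vcal$ acting on $3$-subsets of $\Vcal$.
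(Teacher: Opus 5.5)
Up to its last step your argument is correct and is essentially the paper's own. The paper reads the four $\Sfrak_\Vcal^+$-orbits off the icosahedral labelling of lines and roots; you make this explicit through the rotation group acting on the six axes, the identification $s_C=s_o$, and the computation $s_o s_B s_o=s_{B^c}$. One small point: to show that complements of face triples are not face triples, the axis count alone is not enough, since a complement-closed family of ten triples has the same axis count. Use also that each pair of axes lies in exactly two face triples (each edge lies on two faces), or check one example.

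The last step, however, fails, and the assertion it is meant to prove is false. Write an odd $g\in\Sfrak_\Vcal$ as $g=s_Ch$ with $h$ preserving $\Escr_C^+$, and let $\bar h=\bar g$ be the induced permutation of the six axes. Then $g s_B g^{-1}=s_C s_{\bar h(B)}s_C=s_{\bar h(B)^c}$. Next, $s_C\notin\Sfrak_\Vcal$: it commutes with $\Sfrak_\Vcal$, being central in the stabilizer of the double six, and $\Sfrak_5$ has trivial center. Hence $\bar h$ lies in the transitive $\Sfrak_5\cong\PGL_2(\FF_5)\subset\Sfrak_6$ but not in the image of $\Sfrak_\Vcal^+\cong\PSL_2(\FF_5)$. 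Since $\PGL_2(\FF_5)$ is transitive on triples, $\bar h$ interchanges the two $\Sfrak_\Vcal^+$-orbits of triples. So $\bar h(B)$ is a non-face triple and $\bar h(B)^c$ is a face triple. The two complementations cancel, and $g$ \emph{preserves} the set of face reflections.

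Concretely, a transposition of $\Vcal$ acts on the axes as a fixed-point-free involution $\bar h$. It commutes with $s_B$ for each of the eight transversals $B$ of $\bar h$, and four of these are face triples. So the $\Sfrak_\Vcal$-stabilizer of a face reflection has order $12$, not $6$. There are therefore four $\Sfrak_\Vcal$-orbits, of sizes $1,10,10,15$; the two of size $10$ are interchanged by conjugation with $s_C$, but by no element of $\Sfrak_\Vcal$.

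This is what the rest of the paper actually uses: $\aut(X_C)=\Sfrak_\Vcal$ acts on the Petersen graph $\Ps$, the bijection $I\mapsto s_I$ is $\Sfrak_\Vcal$-equivariant, and the proof of Corollary~\ref{cor:good choice of 6} refers to two $10$-element $\Sfrak_\Vcal$-orbits. Your suggested ``clean'' fix therefore proves the opposite of what you want. $\Sfrak_\Vcal$-invariance of $\Ps$ is exactly the statement that odd elements do \emph{not} interchange the two orbits. The final sentence of the statement cannot be proved as written; the correct conclusion is four $\Sfrak_\Vcal$-orbits.
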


\begin{remark}
\label{rem:unique-S5}
    As the $\PGL_4(\C)$-stabilizer of~$X_C$, $\Sfrak_\Vcal$ is a subgroup of $\pi_1^\orb(\M_\cub)$.
    In fact it represents the unique conjugacy class of subgroups isomorphic
    to $\Sfrak_5$.  The key to this is 
    an invariant
    %a $\pi_1^\orb(\M_\cub)$-invariant
    Riemannian metric 
    %of negative sectional curvature 
    on
    the universal orbifold cover of
    $\M_\cub$, whose metric 
    completion is  negatively curved in the sense that it is a CAT($-1$) metric space
    \cite{A00}.  Such a space has the property that every finite group of isometries
    has a fixed point.  The points added in the completion lie over 
    $\Ds_\cub^\st$, and none of their stabilizers contains~$\Sfrak_5$.
    Therefore any $\Sfrak_5$ fixes a point which
    represents a smooth cubic surface admitting $\Sfrak_5$ symmetry.
    The only such surface is~$X_C$.
\end{remark}

 \subsection{The Sylvester family}
The Clebsch diagonal surface sits in what is called the \emph{Sylvester family} of cubic surfaces in  $\PP_\Vcal$. It is defined by 
\begin{equation}\label{eqn:sylvester}
\textstyle \quad  \sum_{i\in \Vcal} Z_i=0; 
\quad \sum_{i\in \Vcal} \lambda_i Z_i^3=0, 
\end{equation}
where the parameter $\lambda:=[(\lambda_i)_{i\in \Vcal}]$ is also an element of $\PP_\Vcal$,  the Clebsch surface being  given by $\tau:=[\{1\}_{i\in \Vcal}]$. It is thus called, because all its members have the Sylvester pentahedral form.  Note that the $\Sfrak_\Vcal$-action on  $\PP_\Vcal$ defined by  permutation the coordinates,  extends to the whole family 
(in the sense that it permutes the fibers).

\begin{remark}\label{rem:segre}
B.~Segre  (\cite{segre}, \S 85) proved that if  two smooth cubic surfaces appearing  in the Sylvester  family are isomorphic, then
they lie in the same $\Sfrak_\Vcal$-orbit.  He also determined the smooth cubic surfaces  not appearing in this family. 
\end{remark}

We now restrict the parameters of the  Sylvester family to the 
algebraic torus  $\Ts'\subset \PP_\Vcal $ defined by $\prod_{i\in\Vcal} \lambda_i\not=0$. 
A point $p\in X_\lambda$   is singular if and only if 
the differentials of $\sum_{i\in \Vcal} Z_i$ and   $\sum_{i\in \Vcal} \lambda_iZ^3_i$ 
are  proportional at $p$. This means that 
$\lambda_i p_i^2$ is independent of $i$. 
Assuming $\lambda\in \Ts'$, we can then take $\lambda_i p_i^2=1$ for all $i$, so that $p_i=\eps_i \lambda_i^{-1/2}$ for 
some sign $\eps_i\in\{\pm1\}$. 
The condition that $p$ lies on $X_\lambda$ then amounts to $\sum_{i\in \Vcal} \eps_i \lambda_i^{-1/2}=0$.

This suggests that for the torus $\Ts'$ we  make the base change  $t_i^2=\lambda_i^{-1}$. This  yields the algebraic torus $\Ts\subset \PP_\Vcal$ defined by $\prod_{i\in \Vcal}t_i \not=0$ and a regular covering $\Ts\to \Ts'$. If we  also make the coordinate change $Z_i=t_iz_i$, then the cubic surfaces of the above form are  now given by  
\begin{equation}\label{eqn:newfamily}
\textstyle X_t: \quad \sum_{i\in \Vcal} t_iz_i=0, \;\sum_{i\in \Vcal} t_i z_i^3=0 
\end{equation}
with  $(t_i)_{i\in \Vcal}\in \Ts$.  Note that the identity element of $\Ts$   corresponds to
the Clebsch surface.   The resulting family $\Xs\to \Ts$  still comes with an evident $\Sfrak_\Vcal$ action, but there is now more symmetry due to our base change:  if $\eps\in \mu_2^\Vcal$, then 
\[
([z], [t])\mapsto  ([(\eps_iz_i)_i], [(\eps_i t_i)_i])
\]
obviously leaves the family invariant.  We thus find that the group $\mu_2^\Vcal\rtimes\Sfrak_\Vcal$ acts on the family   $\Xs\to \Ts$.

Our justification for using $\Xs/\Ts$ as our model is in part explained by the following two lemmas.

\begin{lemma}\label{lemma:discriminant1}
The  cubic surface  $X_t$ defined the equations $\eqref{eqn:newfamily}$ with $t\in \Ts$  has no singularities except possibly at the $16$ points $\eps\in\mu_2^{\Vcal}$.  A singularity
exists there if and only if $\sum_{i\in\Vcal}\eps_it_i=0$, in which case it has nodal type ($A_1$).
\end{lemma}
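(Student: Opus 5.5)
Since $X_t$ lies in the hyperplane $H_t=\{\sum_i t_iz_i=0\}$, a point $p\in X_t$ is singular exactly when the differentials of $F=\sum_i t_iz_i$ and $G=\sum_i t_iz_i^3$ are linearly dependent at $p$. We will use a Lagrange multiplier argument. The condition is that $(3t_ip_i^2)_i$ and $(t_i)_i$ are proportional. Since all $t_i\neq 0$, this says $p_i^2=c$ for all $i$, for some constant $c$. The case $c=0$ would force $p=0$, which is impossible, so we may scale $p$ to get $c=1$, that is $p=\eps\in\mu_2^\Vcal$. Up to the overall sign $\pm1$ these give $32/2=16$ points of $\PP_\Vcal$. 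Such a point lies on $X_t$ exactly when $\sum_i t_i\eps_i=0$ and $\sum_i t_i\eps_i^3=\sum_i t_i\eps_i=0$, and these two conditions coincide. This proves the first two assertions.

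It remains to show the singularity is a node. Using the symmetry $(z,t)\mapsto(\eps z,\eps t)$ of the family, we may assume $\eps=(1,\dots,1)$, so $p=\tau$ and $\sum_it_i=0$. We work in the affine chart of $\PP_\Vcal$ around $p$ with coordinates $z_i=1+u_i$ (after scaling), and restrict to the hyperplane $\sum_i t_iu_i=0$, which is linear in $u$. Expanding,
\[
\textstyle G=\sum_i t_i(1+u_i)^3=\sum_i t_i+3\sum_i t_iu_i+3\sum_i t_iu_i^2+\sum_i t_iu_i^3 .
\]
On $H_t$ the constant and linear terms vanish, so the quadratic part is $q(u)=3\sum_i t_iu_i^2$ restricted to the $3$-dimensional subspace $V=\{\sum_i t_iu_i=0\}$. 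We must show that $q|_V$ is nondegenerate.

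This nondegeneracy is the only real computation. The bordered Hessian criterion says that $q|_V$ is degenerate if and only if there exist $u\in V\ssm\{0\}$ and $\mu$ with $t_iu_i=\mu t_i$ for all $i$, i.e.\ $u_i=\mu$ for all $i$. Then $u\in V$ reads $\mu\sum_i t_i=0$. This is automatic because $\sum_it_i=0$, so the argument has to be redone in projective rather than affine terms. The direction $u=(1,\dots,1)$ is the radial direction, which is zero in $\PP_\Vcal$. So we should instead work with the cone: $X_t$ is singular at $[p]$ with a node if and only if the Hessian of $G$ restricted to $\{F=0\}\subset\CC^\Vcal$, namely $6\,\mathrm{diag}(t_i)$ on the $4$-space $\sum t_iu_i=0$, has kernel exactly the line spanned by $p$. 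By the same Lagrange computation, the kernel consists of the $u$ with $t_iu_i=\mu t_i$, which is the constant vectors, i.e.\ the span of $p$. Its dimension is $1$, so after projectivizing, the quadratic form on the $3$-dimensional tangent space is nondegenerate, and the singularity is of type $A_1$.

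The main obstacle is exactly this bookkeeping between the affine chart and the cone. I would present it cleanly via the cone, using the standard criterion that an isolated singular point of a complete-intersection surface in $\PP^4$ cut out by a linear form and a cubic is a node if and only if the Hessian of the cubic, restricted to the kernel of the linear form, has corank one (the radial direction). The rest is routine.
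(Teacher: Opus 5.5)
Your proof is correct. The first half is the same as in the paper: proportionality of $dF$ and $dG$ forces $p_i^2$ to be constant, so $p\in\mu_2^\Vcal$ after scaling, and $\sum_i\eps_it_i=0$ is the condition for $p$ to lie on $X_t$. The reduction to $\eps=(1,\dots,1)$ via the $\mu_2^\Vcal$-symmetry is also the same.

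For nodality you take a different route.
\begin{itemize}
\item The paper uses the criterion that the point is an $A_1$ singularity if and only if the coefficients of $dF\wedge dG$ (the $2\times 2$ minors of the Jacobian) generate the maximal ideal of $\Ocal_{X_t,p}$. The coefficient of $dz_i\wedge dz_j$ is a constant times $t_it_j(z_j^2-z_i^2)$, hence a unit times $z_i-z_j$ near $p$, and these linear forms cut out the point.
\item You compute the quadratic part directly. The Hessian $6\,\mathrm{diag}(t_i)$ of $G$, restricted to $\ker F$, has kernel exactly $\CC(1,\dots,1)$, the radial line. So the induced form on the $3$-dimensional tangent space $\ker F/\CC p$ of $\PP(\ker F)$ at $p$ is nondegenerate, and the Morse lemma gives $A_1$.
\end{itemize}
The paper's check is a one-liner that never has to separate off the radial direction, but it relies on the Jacobian-ideal (Tjurina number one) characterization of $A_1$. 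Yours needs only the Morse lemma. It also has the bonus of exhibiting the projectivized tangent cone explicitly as the conic $\sum_i t_iu_i^2=0$ in $\PP(\ker F/\CC p)$.

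One cleanup: your first attempt is not really in an affine chart of $\PP_\Vcal$. With five coordinates $u_i$, the space $\{\sum_it_iu_i=0\}$ is $4$-dimensional, not $3$-dimensional, and that is exactly why the constant vector appeared in the kernel. Imposing $u_{i_0}=0$ for one index $i_0$ gives a genuine chart. The kernel computation (now with an extra multiplier for the constraint $u_{i_0}=0$) then forces $u_i=\mu$ for $i\neq i_0$ and $\mu t_{i_0}=0$, so $u=0$. Thus the affine version also works. Alternatively, present only the cone version, as you intend.
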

\begin{proof}
Write $F$ and $G$ for the linear and cubic forms defining~$X_t$.  A singularity can only appear
where $dF$ and $dG$ are
proportional, which one argues as above holds only at $[(p_i)_{i\in\Vcal}]$ with
$p_i^2$ is independent of $i$.  By scaling
we may take all $p_i\in\mu_2$, proving our first claim.  This point lies on~$X_t$ if and only if
$\sum_i t_i p_i=0$, proving our second.  For the rest, by applying an element of
the deck group $\mu_2^\Vcal$, we may suppose all $p_i=1$ and 
that $\sum_{i\in\Vcal}t_i=0$. 

This singular point is an ordinary quadratic singularity if and only if the coefficients of $dF\wedge dG$ generate the maximal ideal of $\Ocal_{X_t, p}$. The coefficient of $dz_i\wedge dz_j$ in $dF\wedge dG$  is  $\pm 2t_it_j(z_i-z_j)(z_i+z_j)$ and so this is a unit times $z_i-z_j$. These linear forms indeed generate the maximal ideal of the point with all its  components equal.
\end{proof}

We shall refer to the hyperplane sections of $\Ts$ found in Lemma \ref{lemma:discriminant1} as  \emph{discriminant hyperplanes}. 
It is convenient to  label them in terms of subsets of $\Vcal$. They come as three  $\Sfrak_\Vcal$-orbits and 
each of these hyperplanes are defined over $\RR$: 
\begin{enumerate}
\item[$D_\emptyset$]  the hyperplane defined by $\sum_{j\in \Vcal} t_j=0$;
\item[$D_i=D_{\{i\}}$] here $i\in \Vcal$ and $D_i$ is  the hyperplane  defined by $t_i=\sum_{j\in \Vcal\ssm\{i\}} t_j$;
\item[$D_I$] where $I$ a $3$-element subset of $\Vcal$,  and $D_I$ is the hyperplane  defined by $\sum_{i\in I} t_i=\sum_{j\in \Vcal\ssm I} t_j$.
\end{enumerate}

Our focus will be on the last collection $\{D_I\}_I$, whose index set (the $3$-element subsets of $\Vcal$)  is by definition also the index set of  vertices of the Petersen graph $\Ps$. 
Let us denote by $D_\Ts\subset \Ts$ the union of the above divisors  (so this is the discriminant of the family $\Xs/\Ts$).

\begin{lemma}\label{lemma:discriminant2}
The discriminant $D_\Ts$  is  a simple normal crossing divisor in~$\Ts$, each of whose 
components is defined over $\RR$. When $t\in \Ts$, the singular set of the cubic surface $X_t$  has only nodal ($A_1$) singularities and as many as  the multiplicity of $D_\Ts$ at $t$; these cubic surfaces and their  singularities are all real when $t$ is. 

In particular, this family defines a morphism $\Ts\to \M_\cub^\st$.
\end{lemma}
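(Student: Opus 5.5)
The discriminant $D_\Ts$ is the union of the $16$ hyperplanes $H_\eps:=\{\sum_i\eps_it_i=0\}$, where $\eps$ runs over $\mu_2^\Vcal$ modulo $\pm1$. These are the $1+5+10$ hyperplanes $D_\emptyset,D_i,D_I$ listed above. Each is visibly defined over $\RR$, since its equation has coefficients $\pm1$.

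By Lemma \ref{lemma:discriminant1}, the singular points of $X_t$ are exactly the points $[\eps]$ with $t\in H_\eps$, and each is a node. Distinct classes $\pm\eps$ give distinct points of $\PP_\Vcal$. Hence the number of nodes of $X_t$ equals the number of hyperplanes $H_\eps$ through $t$. When $t$ is real the points $[\eps]$ are real, so the surface and its singularities are real. Stability follows from the characterization recalled in \S\ref{subsect:marked stable}: a cubic surface with only nodes is stable. So $t\mapsto[X_t]$ gives a morphism $\Ts\to\M^\st_\cub$. For this one checks that $\Xs\to\Ts$ is a flat family of cubic surfaces, which is clear because $\sum t_iz_i=0$ cuts out a $\PP^3$ in $\PP_\Vcal$, and then uses the universal property of the GIT quotient.

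It remains to show that the hyperplanes cross normally inside $\Ts$. The plan is to prove that whenever several $H_\eps$ pass through a point $t\in\Ts$, their defining linear forms on $\CC^\Vcal$, together with the Euler direction, are linearly independent. Equivalently, the vectors $\eps$ involved, viewed in $\CC^5$, are linearly independent. Since $\mu_2^\Vcal\rtimes\Sfrak_\Vcal$ acts on the configuration, we may translate so that one hyperplane is $D_\emptyset$, i.e.\ $\eps=(1,\dots,1)$. Then we analyse which other $\eps$ can occur simultaneously with all $t_i\neq0$.

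The key observation is the following. If $t\in H_\eps\cap H_{\eps'}$, then adding and subtracting the two equations gives $\sum_{\eps_i=\eps'_i}\eps_it_i=0$ and $\sum_{\eps_i\ne\eps'_i}\eps_it_i=0$. Now $t_i\neq0$ for all $i$, so neither index set can be a singleton. Hence $\eps$ and $\eps'$ (up to sign) must differ in exactly two or three coordinates. In terms of the $5$ coordinates, a maximal compatible collection is therefore described by a partition of $\Vcal$ into blocks of size $\ge2$, with $t$ having zero signed sum on each block. Running through the options $\{2,3\}$ and $\{5\}$ shows that at most four hyperplanes meet. For instance, for $t=(a,-a,b,c,-b-c)$ up to symmetry, the hyperplanes through $t$ correspond to flipping signs of blocks: for the partition into blocks of sizes $2$ and $3$ this gives $2$ hyperplanes, and for finer configurations inside the $3$-block you get more. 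One then checks directly that the corresponding $\eps$'s are linearly independent; this is a small rank computation on $\pm1$ vectors. This matches the fact, from Lemma \ref{lemma:linesonstablecubic}, that at most four nodes occur (the Cayley case).

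The main obstacle is organizing this case analysis cleanly, so that every coincident set of hyperplanes at a point with all $t_i\ne0$ is accounted for. The independence check itself is routine once the list is in hand. A further point to settle is that the condition $t_i\neq0$ is exactly what excludes the non-normal-crossing coincidences, such as three hyperplanes containing a common codimension-$2$ space. In those cases the three $\eps$'s are dependent, which happens only when some $t_i$ is forced to vanish.
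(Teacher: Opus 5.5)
Your route is the paper's. The node count, the reality statements and the morphism to $\M^\st_\cub$ are fine. For normal crossings you, like the paper, use $\mu_2^\Vcal$ to put $D_\emptyset$ through $t$ and then exploit $t_i\neq0$. Your observation that two sign vectors through $t$ differ in exactly two or three places is the paper's first step: no $D_i$ meets $D_\emptyset$ inside $\Ts$.

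The step meant to finish the argument, however, is wrong. A maximal set of hyperplanes through $t$ is \emph{not} described by a partition of $\Vcal$ into zero-sum blocks of size $\ge2$. Identify $\Vcal$ with $\{1,\dots,5\}$ and take $t=[1{:}-1{:}-1{:}-1{:}2]\in\Ts$. It lies on $D_\emptyset$ and on the three hyperplanes obtained by flipping the signs on $\{1,2\}$, $\{1,3\}$, $\{1,4\}$, namely $D_{\{3,4,5\}}$, $D_{\{2,4,5\}}$, $D_{\{2,3,5\}}$. These zero-sum pairs overlap, so they are not the blocks of any partition. The partition type $\{2,3\}$ yields only two hyperplanes, so ``at most four'' does not follow from your framework. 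Also, ``finer configurations inside the $3$-block'' cannot exist when blocks have size $\ge2$. So the triple and quadruple points, which are the whole content of the normal-crossing claim, are not handled.

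Here is what is missing. With $D_\emptyset$ through $t$, each other hyperplane through $t$ is $H_{\eps^J}$, where $\eps^J$ is $-1$ exactly on a $2$-set $J$ with $\sum_{j\in J}t_j=0$.
\begin{itemize}
\item Your pairwise observation, applied to $\eps^J,\eps^{J'}$, shows that no two such $J$ are disjoint.
\item A triangle $\{a,b\},\{b,c\},\{a,c\}$ would force $t_a=t_b=t_c=0$. This is the case $|I\cap I'\cap I''|=2$ that the paper singles out, and it needs a three-term argument, not a pairwise one.
\item Hence the $J$'s form a star $\{a,j_1\},\dots,\{a,j_k\}$ with $t_{j_r}=-t_a$. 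Moreover $k\le3$, since $k=4$ would give $\sum_it_i=-3t_a\neq0$.
\item Independence is then immediate. The vectors $\mathbf{1}-\eps^{J}$ are twice the indicators of the sets $\{a,j_r\}$, hence independent. They all vanish at an index outside $\{a,j_1,\dots,j_k\}$, where $\mathbf{1}$ does not.
\end{itemize}

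Your closing remark also misplaces the danger. Any three pairwise non-proportional $\pm1$-vectors are linearly independent, so three of the $H_\eps$ never share a codimension-$2$ subspace. The failures of normal crossing that $t_i\neq0$ must exclude all involve at least four hyperplanes; for example, $[1{:}-1{:}0{:}0{:}0]$ lies on eight of them.
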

\begin{proof}
Lemma \ref{lemma:discriminant1} shows that every singular point $p$ of $X_t$ determines  a discriminant  hyperplane section containing $t$ and that  this singular point is nodal. 
If $D_\Ts$ fails to be a normal crossing divisor at some $t$, then by applying an element
of $\mu_2^\Vcal$ we
can suppose $D_\emptyset$ passes through~$t$.  Then none of the $D_i$ pass through~$t$,
or else some $t_i$ would vanish.  A similar argument shows that $t$ cannot lie on three
divisors $D_I, D_{I'}, D_{I''}$ where $I\cap I'\cap I''$ has size~$2$.  The rest of the
analysis is left for the reader.
\end{proof}

\subsection{The Sylvester polytope}\label{}
We originally regarded $\PP_\Vcal$ as having homogeneous coordinates
$\lambda_i$ and containing $\Tcal'$ as a dense subset.  But our
alteration of the Clebsch family is easier to work with, so now
regard $\PP_\Vcal$ as having homogeneous coordinates $t_i$ and containing
$\Tcal$ as a dense subset.  The family
\eqref{eqn:newfamily}, a priori defined over~$\Tcal$, obviously
extends over~$\PP_\Vcal$.

\begin{definition}\label{def:s-polytope}
The \emph{Sylvester polytope} 
$\Pi\subset \PP_\Vcal(\RR)\iso \RR\PP^4$ is the closure of the connected component of the discriminant complement in $\RR\PP^4$ that contains
the  Clebsch point $[1{:}1{:}1{:}1{:}1]$. 
\end{definition}

\begin{proposition}\label{prop:about pie}
The Sylvester polytope $\Pi$ parametrizes real cubic surfaces. 
It is the projectivization of the cone in $\RR^\Vcal$ 
whose 10 facets are
defined by the supporting hyperplanes  
$\sum_{v\in I} t_v\le \sum_{j\in \Vcal\ssm I} t_j$, where $I\subset \Vcal$ is of size 3.  

Its set of vertices consists of two $\Sfrak_\Vcal$-orbits of size $5$: a  vertex of $\Pi$  has all but one  of its (homogeneous) $\Vcal$-coordinates equal to 1 and the remaining coordinate equal to either $0$ or $2$.

The first $\Sfrak_\Vcal$-orbit coincides with $\Pi^\infty:=\Pi\ssm \Ts$ and  each of its points  lies on six distinct facets. The rest $\Pi^\st:=\Pi\cap \Ts$ is a manifold with corners whose codimension $k$ corners parametrize (stable) cubic surfaces with precisely $k$ ordinary double points. 
\end{proposition}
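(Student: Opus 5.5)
Our plan is to first pin down the chamber $\Pi$ by explicit linear inequalities, and then read off its face structure from the combinatorics of $\Vcal$. The discriminant complement in $\RR\PP^4$ is cut out by the real hyperplanes $D_\emptyset$, $D_i$, $D_I$ together with the coordinate hyperplanes $t_i=0$ (the complement of $\Ts$). At $\tau=[1{:}\cdots{:}1]$ we have $t_i>0$, $\sum t_j\neq 0$, $t_i-\sum_{j\ne i}t_j=-3<0$ and $\sum_I t_i-\sum_{\Vcal\ssm I}t_j=1>0$. So on the cone in $\RR^\Vcal$ over the component containing $\tau$ these signs persist.

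The key step is to show that only the $D_I$ are relevant. On the cone $C=\{t:\sum_{i\in I}t_i\le \sum_{j\notin I}t_j\ \forall |I|=3\}$, normalized so that $\sum t_j>0$, we show the other inequalities follow. Take $I$ and its complement. Adding the two inequalities for $I\ni i$ and $I'\ni i$ with $I\cap I'=\{i\}$ and $I\cup I'=\Vcal$ gives $2t_i\le 2\sum_{j\ne i}t_j - 2t_i$ plus other terms. More directly, average the inequalities over the six $3$-sets $I$ containing a fixed $i$ to obtain $t_i\le\frac13\sum_{j\ne i}t_j$. Similarly, averaging over the $3$-sets not containing $i$ yields $t_i\ge 0$ and $\sum t_j>0$, with equality $t_i=0$ forcing several of the $D_I$-inequalities to be equalities. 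This linear-algebra check, done once and transported by $\Sfrak_\Vcal$, shows that $C$ is the closure of the chamber and has exactly the ten facets $D_I$. Each facet is genuinely one, since it contains an interior point of itself, for example a small perturbation of $\tau$ towards $D_I$. It also follows that $\Pi$ is real, so it parametrizes real cubic surfaces $X_t$ by Lemma \ref{lemma:discriminant2}.

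Next we enumerate the vertices. A vertex is a ray where the active inequalities have rank $4$, and we compute them by case analysis on which $3$-sets are tight. The points $v_i^0=(1,\dots,0,\dots,1)$ (with $0$ at $i$) make all six $I\ni i$ tight, since $1+1=1+1$, while the others give $2\le 1$, which is strict. These six tight equations have rank $4$. The points $v_i^2$ (with $2$ at $i$) make the four sets $I\not\ni i$ tight ($3=1+2$), again of rank $4$. To prove there are no other vertices, we use that the incidence pattern of tight sets must be "closed". If $I$ and $I'$ are both tight and $|I\cap I'|=2$, subtracting the two equations gives equality of two coordinates. One then checks that the tight families of rank $4$ are exactly the stars $\{I\ni i\}$ and the co-stars $\{I\not\ni i\}$. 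We expect this combinatorial exhaustion to be the main obstacle, and we would organize it via the Petersen graph, in which tight sets correspond to vertex subsets.

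Finally, $\Pi\ssm\Ts$ consists of the points with some $t_i=0$, and by the averaging argument such a point equals $v_i^0$; this vertex lies on six facets, which is too many for a simple corner. Away from these points, $\Pi\subset\Ts$ near each face meets only the $D_I$, and these cross normally by Lemma \ref{lemma:discriminant2}. Hence $\Pi^\st$ is a manifold with corners, and a point lying on exactly $k$ facets gives a surface with exactly $k$ nodes, again by Lemma \ref{lemma:discriminant2}. This in particular covers the vertices $v_i^2$, which lie on four facets and correspond to the Cayley surface.
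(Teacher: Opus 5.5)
Your plan is the paper's: show that the ten inequalities attached to the $D_I$ imply those for $D_\emptyset$ and the $D_i$, identify the vertices, and get the corner structure from the normal crossings in Lemma~\ref{lemma:discriminant2}. But the step that carries the weight is wrong as written, in two ways.

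\emph{The sign is inconsistent.} You compute $\sum_{i\in I}t_i-\sum_{j\notin I}t_j=1>0$ at $\tau$, yet you define $C$ by the reverse inequality and normalize $\sum_j t_j>0$. Summing the ten inequalities that define $C$ gives $6\sum_jt_j\le 4\sum_jt_j$, so your normalized $C$ is empty. In particular neither $\tau$ nor $v_i^0$ lies in it: for $I\not\ni i$, $v_i^0$ gives $3\le 1$, not ``$2\le1$''. The cone in the statement is minus the chamber's cone. That is harmless after projectivizing, but you must then normalize $\sum_jt_j<0$, or work, as the paper's proof does, with $\lambda_I(t):=\sum_{i\in I}t_i-\sum_{j\notin I}t_j\ge 0$.

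\emph{The averages are also off.} In fact $\sum_{I\ni i}\lambda_I=6t_i$ and $\sum_{I\not\ni i}\lambda_I=2\sum_{j\ne i}t_j-4t_i$.
\begin{itemize}
\item The six sets containing $i$ give $t_i\ge 0$, with equality only if all six of these $\lambda_I$ vanish. That forces $t=v_i^0$, and this is what actually yields $\Pi\ssm\Ts=\{v_i^0\}_{i\in\Vcal}$.
\item The four sets avoiding $i$ give $t_i\le\frac12\sum_{j\ne i}t_j$.
\end{itemize}
Together these keep $D_\emptyset$ and the $D_i$ off $\Pi$. The inequality $t_i\le\frac13\sum_{j\ne i}t_j$ that you claim is false on $\Pi$: at $v_i^2$ it reads $2\le\frac43$.

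\emph{Vertex enumeration.} The paper merely calls this straightforward, but your criterion ``tight families of rank $4$'' is not the right one, because feasibility matters. For example, $\{c,d,e\},\{a,d,e\},\{b,d,e\},\{a,b,c\}$ has rank $4$ but cuts out $[0:0:0:1:-1]\notin\Pi$. A clean route is to rewrite $\lambda_I\ge0$ as $t_a+t_b\le\frac12\sum_jt_j$, where $\{a,b\}$ is the pair complementary to $I$.
\begin{itemize}
\item Two disjoint tight pairs force the fifth coordinate to vanish, hence give some $v_i^0$ by the corrected averaging.
\item Otherwise all $t_j>0$, and the tight pairs form a star or a triangle. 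After normalizing $\sum_jt_j=1$, the tight equations cut out an affine space whose dimension is the number of bipartite components of the tight graph (isolated points included). Imposing $\sum_jt_j=1$ lowers this by at most one. So only the full star at $i$ gives a vertex, namely $v_i^2$.
\end{itemize}
With these repairs, your final paragraph on $\Pi^\st$ goes through exactly as in the paper.
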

\begin{proof}
It is clear that $\Pi$ is defined by the three types of inequalities $\sum_{j\in \Vcal} t_j\ge 0$,
$\sum_{j\in \Vcal\ssm\{i\}} t_j\ge t_i$, $\sum_{i\in I} t_i\ge \sum_{j\in \Vcal\ssm I} t_j$ (with $I\subset \Vcal$ of size $3$). The first two types are easily seen to be consequences of the last.  The verification that $\Pi_\infty$ is  as stated is straightforward.

Since $D_\Ts$ is a normal crossing divisor whose multiplicity $k$ locus  parametrize (stable) cubic surfaces with 
precisely $k$ ordinary double points, its follows that $\Pi^\st$ is a manifold with corners with the codimension $ k$ corners being as stated. \end{proof}

Since  $\Pi^\st$ parametrizes stable cubic surfaces, we have a natural continuous map $\Pi^\st\to \M_\cub^\st$. It   is clear that this map factors through  $\Sfrak_\Vcal\bs \Pi^\st\to \M_\cub^\st$. The preimage 
of the discriminant is the boundary, so that the interior $\mathring\Pi$ of $\Pi^\st$ is mapped to $ \M_\cub$.  
By B.~Segre's uniqueness theorem (see Remark \ref{rem:segre}), the  Sylvester form of a cubic surface is unique  up to permutation of its coordinates  and so the induced  map 
$\Sfrak_\Vcal\bs\mathring\Pi\to \M_\cub$ injective. Since the interior $\mathring\Pi$ of $\Pi$  is contractible, this endows all the surfaces parametrized by it  with an $L$-marking. In other words, this map lifts to a $\Sfrak_\Vcal$-equivariant map $\mathring \Pi\to \M_\cub (L)$. Proposition \ref{prop:about pie} implies that this lift 
extends (equivariantly) to a  map $\Pi^\st\to \M^\st_\cub (L)$.

\begin{remark}\label{rem:lifts}
One can show that a point of $\Pi^\infty$ represents  a cone over a smooth cubic  curve (which has a simply elliptic singularity of degree 3, hence is not semistable), yet the  map $\Pi^\st\to \M_\cub^\st(L)$ extends continuously to a map $\Pi\to \M_\cub^\sst(L)$ which takes  a point of $\Pi_\infty$ to  point representing a tricuspidal surface. Furthermore, 
if  $\Pi^+\to \Pi$ is the  strict transform of $\Pi$  under the (real) blowup  of  the vertex set $\Pi_\infty$, then 
$\Pi\to \M_\cub^\sst(L)$ lifts continuously  to the Naruki compactification: $\Pi^+\to \widehat\M_\cub(L)$.
\end{remark}

The definition of $\Pi$ makes it clear that its walls are in bijectively labeled by the $3$-element subsets $I\subset\Vcal$: a  wall of $\Pi^\st$ is an open subset of $D_I(\RR)$ for a unique such $I$. 
The monodromy (acting on $H_2(X_C)$)  along  as a simple loop around $D_I$ based at $\Pi^\circ$ will be  a reflection $s'_I\in W$. The resulting map $I\mapsto s'_I$ from the collection of $3$-element subsets of  $I\subset\Vcal$ to the set of   reflections in $W$  is clearly $\Sfrak_\Vcal$-equivariant.

\begin{corollary}\label{cor:good choice of 6}
For a  unique choice of a `six'  $\Escr_C^+$ in the double six $\Escr_C$,  the   monodromy  $s'_I$ with `base point' $\Pi^\circ$ around the facet indexed by the $3$-element subset $I\subset \Vcal$  is equal to the  reflection  $s_I$ we associated with $I$. So the map $\Pi^\st\to \M_\cub^+(L)$ has then the  property that the facet of $\Pi^\st$ defined by $I\subset \Vcal$ maps to $\N_{s_I}$.
\end{corollary}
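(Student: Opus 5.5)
We have to show that the reflections $s'_I$ coincide with the $s_I$ for exactly one of the two sixes. The $s_I$ were defined in Subsection~\ref{subsect:clebsch} only after identifying $\Escr_C^+$ with the antipodal vertex pairs of $\Is$; the other six $\Escr_C^-$ yields the $s_C$-conjugates $s_Cs_Is_C$. So the plan is to show that the set $\{s'_I\}_I$ is one of the two size-$10$ $\Sfrak^+_\Vcal$-orbits of Corollary~\ref{cor:choice of six}, and then to choose the six accordingly.

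First, each $s'_I$ is a reflection, being the monodromy around a smooth point of the discriminant, i.e.\ of a one-nodal degeneration. The map $I\mapsto s'_I$ is $\Sfrak_\Vcal$-equivariant, and its domain is a single $\Sfrak_\Vcal$-orbit of size $10$. The stabilizer of $I$ is $\Sfrak_3\times\Sfrak_2$, which has order $12$, so the image orbit has size dividing $10$. By Corollary~\ref{cor:choice of six}, the $\Sfrak_\Vcal$-orbits of reflections have sizes $1$, $20$ and $15$. The image therefore has size $1$ and equals $\{s_C\}$, or the map is not injective with image inside the $20$-orbit, or $10$ divides $15$; the last is impossible. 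The constant case $s'_I=s_C$ for all $I$ is excluded because adjacent facets $D_I$ and $D_{I'}$ with $|I\cap I'|=1$ meet along a codimension-two corner of $\Pi^\st$. By Proposition~\ref{prop:about pie}, that corner parametrizes surfaces with two nodes, whose reflections are distinct and commute by Lemma~\ref{lemma:linesonstablecubic}. Since $\Pi^\circ$ is contractible, the two monodromies computed from the base point are the reflections of the two nodes, so $s'_I\ne s'_{I'}$.

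It remains to settle the $20$-orbit case. Here the image has size $10$ or $5$ and is a union of $\Sfrak^+_\Vcal$-orbits among the two size-$10$ ones, $\{s_I\}$ and $\{s_Cs_Is_C\}$. A set of size $5$ cannot be such a union, so the image has size $10$ and equals one of these two orbits. The two orbits are exchanged by any odd permutation, because they form one $\Sfrak_\Vcal$-orbit; equivalently, swapping $\Escr^+_C$ with $\Escr^-_C$ exchanges them. Hence exactly one choice of six makes $\{s'_I\}=\{s_I\}$ as sets.

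We then need pointwise equality. Both maps $I\mapsto s'_I$ and $I\mapsto s_I$ are $\Sfrak_\Vcal$-equivariant bijections onto the same orbit, so they differ by an $\Sfrak_\Vcal$-equivariant permutation of the $3$-subsets of $\Vcal$. The automorphism group of $\Sfrak_\Vcal$ acting on $\Sfrak_\Vcal/(\Sfrak_3\times\Sfrak_2)$ is $N(H)/H$, where $H=\Sfrak_3\times\Sfrak_2$. Since $N(H)=H$, only the identity remains. The final claim, that the facet indexed by $I$ maps to $\N_{s_I}$, follows because the irreducible components of $\Ds_\cub(L)$ are labeled by the monodromy reflections.

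The main obstacle is the argument for $s'_I\neq s'_{I'}$, which requires that the $L$-marking transported from $\Pi^\circ$ to a corner identifies the two local monodromies with the reflections attached to distinct nodes. We handle it using contractibility of $\Pi^\circ$ and the normal crossing structure of $D_\Ts$ from Lemma~\ref{lemma:discriminant2}.
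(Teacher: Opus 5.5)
Your outline matches the paper's proof: equivariance of $I\mapsto s'_I$, the choice of six deciding between the two $10$-element sets of reflections not commuting with $s_C$, and $N(H)=H$ for pointwise equality. Your explicit exclusion of the constant value $s_C$ fills in a point the paper passes over. The gap is the orbit bookkeeping in between, which contradicts itself.

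\begin{itemize}
\item The image of an $\Sfrak_\Vcal$-equivariant map on a transitive $\Sfrak_\Vcal$-set is one \emph{full} $\Sfrak_\Vcal$-orbit. If the $20$ reflections not commuting with $s_C$ formed a single $\Sfrak_\Vcal$-orbit, as you assert, then $\{s'_I\}$ (of size at most $10$) could not lie in it at all. Your case analysis would then force $s'_I=s_C$ for all $I$, which you rule out.
\item If odd permutations interchanged $\{s_I\}$ and $\{s_Cs_Is_C\}$, neither set would be $\Sfrak_\Vcal$-stable. Neither could then equal the $\Sfrak_\Vcal$-stable set $\{s'_I\}$, and $I\mapsto s_I$ could not be the $\Sfrak_\Vcal$-equivariant map your last step uses.
\item Your ``equivalently'' conflates two operations. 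Replacing $\Escr^+_C$ by $\Escr^-_C$ amounts to conjugating by $s_C$, which does swap the two sets. Odd permutations, by contrast, preserve each of them.
\end{itemize}

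What is true is that the $\Sfrak_\Vcal$-orbits are $\{s_C\}$, the $15$-set, and the two $10$-sets, the last two interchanged by $s_C$-conjugation. This is what the paper's own proof of this corollary uses. The closing sentence of Corollary~\ref{cor:choice of six} cannot be taken at face value; it would also stop $\Sfrak_\Vcal$ from acting on the graph $\Ps$.

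Your own steps prove the correct statement once the false premise is dropped. $s_C$ is fixed by $\Sfrak_\Vcal$, so the $20$-set is $\Sfrak_\Vcal$-stable, and the $15$-set is a single orbit. Hence $\{s'_I\}$, an orbit of size dividing $10$ other than $\{s_C\}$, is a union of $\Sfrak^+_\Vcal$-orbits inside the $20$-set. So it is one of the two $10$-sets, and both are then $\Sfrak_\Vcal$-stable. Concretely, an odd $g$ acts on $L$ as $s_C\tau$, with $\tau$ preserving $\Escr^+_C$ but not in the icosahedral group; $\tau$ and $s_C$ each swap the two sets.

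With this correction your proof works. For the last step, $\Sfrak^+_\Vcal$-equivariance of $I\mapsto s_I$ already suffices, because the stabilizer ($\cong\Sfrak_3$) of $I$ in $\Sfrak^+_\Vcal$ is self-normalizing.

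One smaller slip: facets with $|I\cap I'|=1$ do not meet in $\Pi^\st$. Their complements $\{a,b\}$ and $\{c,d\}$ are disjoint, and $t_a+t_b=t_c+t_d=\tfrac12\sum_v t_v$ forces the fifth coordinate to vanish. So such facets meet only at a vertex in $\Pi^\infty$. The codimension-two corners come from pairs with $|I\cap I'|=2$ (commuting reflections), and your exclusion of $s'_I\equiv s_C$ should use such a pair.
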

\begin{proof}
Corollary \ref{cor:choice of six}  describes all the 
$\Sfrak_\Vcal$-orbits of reflections in $W$: besides the  singleton  $\{s_C\}$ we have a 15-element set 
and two 10-element sets. The latter two  are exchanged  by conjugation with $s_C$. One  them is indexed 
by pairs of antipodal facets of the regular icosahedron  $\Is$ and we also saw that these are in 
bijective correspondence with the  $3$-element subsets of  $\Vcal$; this defined the $\Sfrak_\Vcal$-equivariant map $I\mapsto s_I$. 

The latter map involved the  
choice of the six $\Escr_C^+$ in the double six $\Escr_C$. Since $s_C$ exchanges  $\Escr_C^+$ and $\Escr_C^-$, we can arrange that the image of the map $I\mapsto s'_I$ is this orbit. A little exercise shows that a $\Sfrak_\Vcal$-equivariant bijection between the $3$-element subsets of $\Vcal$ is necessarily  the identity and so then $s'_I=s_I$ for all $I$.
\end{proof}

For a more concrete computation the monodromy action of the ten generators on the 27 lines of $X_C$, see \cite{B26}.

From  now on we assume $\Escr_C^+$ has been chosen as in Corollary  \ref{cor:good choice of 6}. So the facets of 
$\Pi^\st$ are indexed by $\Ps$: with every $s\in \Ps$ is associated a facet $\Pi_s$ of $\Pi^\st$
which under the map $\p\Pi^\st\to \N(L)$ lands in the building block $\N_s$. Any face of $\Pi^\st$ of codimension $k\ge 1$ is a nonempty  intersection $\Pi_S:=\cap_{s\in S} \Pi_s$, where 
$S\subset \Ps$ is a pairwise commuting  set (in other words, the induced subgraph on $S$ consists of isolated points).
It  lands in $\N_S=\cap_{s\in S}\N_s$ with  its relative interior $\mathring \Pi_S$ mapping to $\mathring \N_S$. 
\smallskip

We observed that the lines on $X_C$ are all real and that the Sylvester polytope parametrizes 
real cubic surfaces. Hence $\mathring{\Pi}$ parametrizes real cubic surfaces all of whose $27$ lines are real. 
In view of the discussion in Subsection \ref{subsect:marked stable}, this is still the case on $\Pi^\st$. 
The following is then clear.

\begin{corollary}\label{cor:real component}
The relative interior $\mathring\Pi_s$ of $\Pi_s$ parametrizes real cubic surfaces with a single node through which pass $6$ real lines and is mapped by  $\Pi^\st\to \M_\cub(L)$ to the part of $\M_s(\RR)$ of $\Ds_\cub(L)$ which 
parametrizes injections  $P_s\hookrightarrow \PP^1(\RR)$ up to projective equivalence. \hfill $\square$
\end{corollary}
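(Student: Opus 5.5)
Since the paper marks this as ``clear'', I will assemble it from three earlier facts: the reality statements about $\Pi^\st$, Proposition \ref{prop:about pie}, and Corollaries \ref{cor:normalization} and \ref{cor:good choice of 6}.

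\emph{Step 1: the surfaces over $\mathring\Pi_s$.} By Proposition \ref{prop:about pie}, a point $t\in\mathring\Pi_s$ lies on exactly one facet, namely the one indexed by $s=s_I$. So $X_t$ has exactly one node. That node is $p=\eps$ with $\eps_i=1$ for $i\in I$ and $\eps_i=-1$ otherwise, and it is real since $t$ is real (Lemma \ref{lemma:discriminant2}).

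\emph{Step 2: the six lines through the node are real.} I would deform from an interior point to $t$ along a path in $\Pi^\st$. Every surface in $\mathring\Pi$ is real with $27$ real lines. This is because $X_C$ has this property, and the lines move continuously in a family with no singular members along a path in the contractible real cell. Complex conjugation acts on the $27$ lines as a locally constant involution, so it stays trivial throughout $\mathring\Pi$.

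Passing to the limit $t\in\mathring\Pi_s$, the discussion in Subsection \ref{subsect:marked stable} applies: the $12$ classes in $\Escr_s$ degenerate in pairs $\{e,s(e)\}$ to the six lines through the node. Conjugation commutes with the limiting process and fixed every class before the limit. Hence it fixes each element of $P_s=\la s\ra\bs\Escr_s$, so each of the six lines through the node is real.

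\emph{Step 3: translate to $P_s\to C_x$.} The node is real, and the six lines through it are real and distinct. So the projectivized tangent cone $C_x$ is a real conic with real points, i.e.\ the split case. The map $P_s\hookrightarrow C_x(\RR)\cong\PP^1(\RR)$ is injective because there is a single node, so $D_x$ is reduced.

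By Corollary \ref{cor:good choice of 6}, the facet $\Pi_s$ maps into the component $\N_s$, hence into $\M^\st_s$. The identification of that component with $\M^\st_{P_s}$ from Corollary \ref{cor:normalization} and the subsequent discussion sends $(X_t,\text{marking})$ to $[P_s\to C_x]$. Therefore $\mathring\Pi_s$ lands in the locus of projective classes of injections $P_s\hookrightarrow\PP^1(\RR)$. Corollary \ref{cor:normalization} also shows that this isomorphism of the component with $\M^\st_{P_s}$ is defined over $\RR$.

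\emph{Main obstacle.} The one delicate point is Step 2: reality of the lines must survive the degeneration. This needs the marking over $\Pi^\st$ to be compatible with complex conjugation, i.e.\ conjugation acts trivially on $L$ over $\mathring\Pi$. I would justify this by noting that the marking is obtained by parallel transport inside the real locus, where conjugation acts continuously and starts out trivial at $X_C$.
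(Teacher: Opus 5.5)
Your proposal is correct and follows the paper's own brief argument. The paper argues as follows: all $27$ lines are real over $\mathring\Pi$, by continuity from $X_C$. This persists on the facets via the description in Subsection~\ref{subsect:marked stable} of the six lines through the node as the images of the pairs $\{e,s(e)\}\subset\Escr_s$. One then reads off an injection $P_s\hookrightarrow C_x(\RR)\cong\PP^1(\RR)$, and the real structure comes from the last clause of Corollary~\ref{cor:normalization}. Your ``main obstacle'' is lighter than you fear: a limit of real lines is real, and only conjugation's permutation action on the lines is used. Strictly, $c_*$ acts on $L$ as $-1$ on classes of real curves, so ``acts trivially on $L$'' should be read as fixing each line class.
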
 

\begin{remark}\label{rem:real component}
For a finite set $P$ of size  $\ge 3$, the space  injections $P\hookrightarrow \PP^1(\RR)$ given up to projective equivalence can be `rigidified' by 
choosing
distinct $p_0, p_1, p_\infty$ and  sending them to $0,1,\infty$. The remaining points of $P$ are then distributed over the three remaining  intervals and this makes it clear that every connected component of this space is contractible. So the facet $F$ in Corollary \ref{cor:real component} lands in a contractible connected  component of $\M_s(\RR)$. This serves as the  homotopy base point of 
$\M_s$ and $\M_{[6]}$  for the presentations of their fundamental groups that we find in Section \ref{sect:quantics} (see Remark \ref{rem:real base point}).
\end{remark}

%The map  $\Pi^\st\to \M^\st_\cub$ defined in Lemma \ref{lemma:lifts} has a  continuous lift $\Pi^\st\to \M^+_\cub (L)$. 

\begin{definition}\label{def:clebschext}
The  \emph{Clebsch extension} $\N^C(L)$  of $\N(L)$ is obtained by attaching copies of 
$\Pi^\st$: for  every $w\in W$ we attach $\Pi^\st$ to $\N(L)$  via the composite of $\p\Pi^\st\to \N$ with $w$, but identify two such attachments in the obvious manner if $w'w^{-1}\in \aut(\Ps)$.
\end{definition}

Note that $\N^C(L)\ssm\N(L)$ is a disjoint union  of copies of $\mathring\Pi$ (indexed by $W/\aut(\Ps)$ and so there are $2^4.3^3$ such). Since the boundary of $\Pi^\st$ is a topological $3$-sphere minus $5$ points, the homotopy effect of attaching a copy of $\Pi^\st$ is that of attaching $4$ three-cells (so we are attaching $2^6.3^3=1728$ three-cells in total).
It is clear that the $W$-action on $\N(L)$ extends to its Clebsch extension and that  $\N^C:=W\bs \N^C(L)$  is an extension of $\N$ with  $\N^C\ssm N= \aut(\Ps)\bs \mathring\Pi$. Attaching three-cells has no effect on the fundamental group and so 
$\N(L) \subset \N^C(L)$ induces an isomorphism of fundamental groups. We are now well-positioned to compute this fundamental group (which, as we have see,  is also the fundamental group of $\M_\cub(L)$).

\section{Determination of the fundamental group}

\subsection{The group $G(\Ps)$}
The Petersen graph $\Ps$ defines an \emph{Artin system}:  a group $\ar(\Ps)$ 
together with a generator  $\sigma_s$ for every vertex $s$ of  $\Ps$ such that the defining 
relations are prescribed by the graph (and are those satisfied by the reflections in its Tits representation), i.e.,  if $s$ and $s'$ are distinct vertices, then 
$\sigma_s$ and $\sigma_{s'}$ commute when $s$ and $s'$  are not connected  by an edge and  satisfy the braid relation $\sigma_s\sigma_{s'}\sigma_s=\sigma_{s'}\sigma_s\sigma_{s'}$  when they are.
The automorphism group of the graph $\aut(\Ps)=\aut(\Vcal)$ evidently acts on this presentation and so we can form the semidirect product $\ar(\Ps)\rtimes\aut(\Ps)$. 

Given a  vertex $s$ of 
$\Ps$,  the  full subgraph $\Ps_s$ of $\Ps$ spanned by the vertices not connected with 
$s$ is a hexagon and hence a Coxeter diagram of type $\hat A_5$. The stabilizer $\aut(\Ps)_s$ acts on this hexagon 
$\Ps_s$  with image  its full automorphism  group (a dihedral group of order 12) and trivial kernel, thus  identifying $\aut(\Ps)_s$ with 
$\aut(\Ps_s)$. It is a general fact that for any full subgraph $\G\subset\Ps$, the associated Artin group $\ar(\G)$ embeds in  $\ar(\Ps)$ \cite{lek:thesis} and so
$\ar(\Ps_s\cup\{s\})\rtimes \aut(\Ps_s)$ embeds in $\ar(\Ps)\rtimes\aut(\Ps)$. 

Since $\N_s$ is obtained from $\Rs ay(\Es_{[6]})$ (the quotient of $\Es_{[6]}$ by the action of the positive real scalars regarded as a subgroup of $\GG_m(\CC)$) by adding corners, it has the same  orbifold fundamental 
group as $\Es_{[6]}$. 
Theorem \ref{thm:central ext} of the appendix shows that this orbifold fundamental group 
is the  quotient of $\ar(\Ps_s\cup\{s\})\rtimes \aut(\Ps_s)$ defined by  the following \emph{rotation relations}
\begin{description} 
\item[(Rot$_s$)]  If $\{s_i\}_{i\in \ZZ/6}$  is a cyclic enumeration of the vertices of the hex\-a\-gon $\Ps_s$, then conjugation with $\sigma_s^{-1}\sigma_{s_1}\sigma_{s_2}\sigma_{s_3}\sigma_{s_4} \sigma_{s_5}$ takes
$\sigma_i$ to $\sigma_{i+1}$ for all $i\in \ZZ/6$.\end{description}
We shall denote this quotient group by $G(\Ps_s)$ and abuse notation a bit by continuing to denote the image of $\sigma_s$ in $G(\Ps_s)$ by the same symbol. The alternative description  Proposition \ref{prop:alternative presentation}
shows that $G(\Ps_s)$ is in fact a $\aut(\Ps_s)$-invariant quotient of $\ar(\Ps_s)$ on which $\aut(\Ps_s)$ acts  by  inner automorphisms.  Let us write $\theta$ for the  unique automorphism of $\Ps$ 
which  fixes $s$ and  rotates the associated hexagon by taking  $s_i$ to $s_{i+1}$ for all $i\in \ZZ/6$. 
This automorphism is order $6$. Such elements form in $\aut(\Ps)\cong\Sfrak_5$ a single conjugacy class (represented in  $\Sfrak_5$  by $(123)(45)$) and this conjugacy class generates $\aut(\Ps)$.
Since $\sigma_s^{-1}\sigma_{s_1}\sigma_{s_2}\sigma_{s_3}\sigma_{s_4} \sigma_{s_5}$ (as an element of $G(\Ps_s)$)  only depends on $\theta$, we shall denote it by $\rho_{\theta}\in G(\Ps_s)$. 

\begin{definition}%\label{def:}
The group $G(\Ps)$ is the quotient of $\ar(\Ps)\rtimes\aut(\Ps)$  obtained by imposing a rotation relation (Rot$_s$) for some vertex $s$ of  $\Ps$: conjugation with $\rho_\theta$ takes 
$\sigma_{s'}$ to $\sigma_{\theta(s')}$ for all $s'\in \Ps_s$.
\end{definition}

Since $\aut(\Ps)$ acts transitively on the vertices of $\Ps$, it does not matter which vertex we take, 
for these relations then will then hold for every vertex. Moreover,  $\rho_{\theta^{-1}}=\rho_\theta^{-1}$.

\begin{lemma}\label{lemma:}
Conjugation with $\rho_\theta $ takes 
$\sigma_{s'}$ to $\sigma_{\theta(s')}$ for \emph{every} vertex $s'$ of $\Ps$.
\end{lemma}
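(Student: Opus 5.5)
The plan is to check the claim vertex by vertex. Fix $s$ and a cyclic enumeration $\{s_i\}_{i\in\ZZ/6}$ of the hexagon $\Ps_s$, so that $\rho_\theta=\sigma_s^{-1}\sigma_{s_1}\sigma_{s_2}\sigma_{s_3}\sigma_{s_4}\sigma_{s_5}$. The vertices of $\Ps$ are $s$, the six $s_i$, and the three neighbours $t$ of $s$. For the $s_i$ the assertion is exactly the relation (Rot$_s$) imposed in $G(\Ps)$, so only $s$ and its neighbours remain.

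\emph{The vertex $s$.} Here $\theta(s)=s$, and $s$ is joined to no $s_i$, so $\sigma_s$ commutes with every $\sigma_{s_i}$ in $\ar(\Ps)$. Hence $\sigma_s$ commutes with $\rho_\theta$, which is the claim for $s$.

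\emph{A neighbour $t$ of $s$.} Such a $t$ is joined to $s$ and to two hexagon vertices. In the example of the introduction ($s=0$, hexagon $(1,4,9,8,7,6)$, $t=5$) these are $s_2$ and $s_4$, two vertices at distance $2$. Relabel so that $t$ braids with $\sigma_s,\sigma_{s_2},\sigma_{s_4}$ and commutes with $\sigma_{s_0},\sigma_{s_1},\sigma_{s_3},\sigma_{s_5}$. Since $\theta$ fixes $s$ and shifts indices by one, $\theta(t)$ is the neighbour $t'$ of $s$ joined to $s_3$ and $s_5$. The goal is therefore the identity
\[
\sigma_{s_1}\sigma_{s_2}\sigma_{s_3}\sigma_{s_4}\sigma_{s_5}\,\sigma_t\,\sigma_{s_5}^{-1}\cdots\sigma_{s_1}^{-1}=\sigma_s\sigma_{t'}\sigma_s^{-1}
\]
in $G(\Ps)$. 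I would not attempt this in $\ar(\Ps)$ alone, where it presumably fails. Instead I would reduce it to (Rot$_u$) for a second vertex $u$. This is legitimate because the relations (Rot$_u$) for all $u$ hold in $G(\Ps)$, being $\aut(\Ps)$-transforms of one another.

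The choice is $u=s_1$, or another hexagon vertex not adjacent to $t$. Then $t$ lies in the hexagon $\Ps_u$, so (Rot$_u$) tells us how $\rho_{\theta_u}$ conjugates $\sigma_t$. The steps are:
\begin{enumerate}
\item Write down $\Ps_u$ explicitly in the Petersen labelling and identify $\theta_u$.
\item Express $\theta$ in terms of $\theta_u$ and other rotations whose action on $\sigma_t$ is already controlled; this is possible because the order-$6$ elements form the conjugacy class $(123)(45)$, which generates $\aut(\Ps)$.
\item Show that the corresponding product of $\rho$'s agrees with $\rho_\theta$ modulo elements that commute with $\sigma_t$.
\end{enumerate}
A cleaner variant of the same idea is to observe that $z:=\theta^{-1}\rho_\theta\in G(\Ps)$ centralizes $\sigma_s$ and all $\sigma_{s_i}$. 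It would then suffice to show that $z$ also centralizes $\sigma_t$. For that I would try to write $z$, using the braid relation between $\sigma_t$ and $\sigma_s$ and (Rot$_{u}$) for suitable $u$, as a word in generators that commute with $\sigma_t$.

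As a fallback I would attack the displayed identity directly. Conjugating $\sigma_t$ successively by $\sigma_{s_5},\sigma_{s_4},\dots$: the $\sigma_{s_5}$ step is trivial, the $\sigma_{s_4}$ step turns $\sigma_t$ into $\sigma_{s_4}\sigma_t\sigma_{s_4}^{-1}=\sigma_t^{-1}\sigma_{s_4}\sigma_t$, and so on. One then uses the hexagon rotation relations in $G(\Ps_s)$, via the alternative presentation of Proposition~\ref{prop:alternative presentation}, to push everything back to $\sigma_s\sigma_{t'}\sigma_s^{-1}$. The main obstacle is precisely this step: finding the right combination of rotation relations at two different vertices so that the word collapses. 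I expect it to be a finite but delicate word computation, and I would first test it in the finite quotient $W$, where $\sigma\mapsto$ reflection, to make sure the bookkeeping of which $t'$ arises is correct.
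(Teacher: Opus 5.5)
You handle the hexagon vertices (this is just (Rot$_s$)) and the vertex $s$ (since $\sigma_s$ commutes with every $\sigma_{s_i}$) correctly. You are also right that (Rot$_u$) holds in $G(\Ps)$ for every vertex $u$. But the only substantive case, the three neighbours of $s$, is not proved, and the set-up you give for it is wrong.

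\emph{The combinatorics.} Since $\Ps$ has girth $5$, a neighbour $t$ of $s$ cannot be joined to two hexagon vertices at distance $2$ (that would close a $4$-cycle through the middle vertex), nor at distance $1$ (a triangle). It is joined to an opposite pair. In your own example $t=5$ is joined to $0$, $6=s_5$ and $9=s_2$, not to $7=s_4$. The correct picture is that $t_i$ ($i\in\ZZ/3$) is joined to $s,s_i,s_{i+3}$, and $\theta(t_i)=t_{i+1}$. So your relabelling (``$t$ braids with $\sigma_s,\sigma_{s_2},\sigma_{s_4}$'') cannot be realised. Your fallback computation, with its nontrivial $\sigma_{s_4}$-step, therefore starts from false commutation data.

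\emph{The argument itself.} Beyond the labelling, your routes remain plans. Step 3 of the first route needs to compare products of various $\rho_{\theta'}$ with $\rho_\theta$. That means knowing how each $\rho_{\theta'}$ acts on generators outside its own hexagon, which is the very kind of statement being proved. The auxiliary vertex also has to be chosen so that its hexagon contains both $t$ and $\theta(t)$. For $t=t_0$ that is $u=s_5$ (or $s_2$), not $s_1$, which is adjacent to $t_1=\theta(t_0)$.

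\emph{The missing idea.} No comparison between different $\rho$'s is needed: the required identity collapses, using only commutations, to a relation inside the single hexagon $\Ps_{s_5}$. For $t=t_0$ you need $\rho_\theta\sigma_{t_0}=\sigma_{t_1}\rho_\theta$. First move $\sigma_{t_0}$ left past $\sigma_{s_4}\sigma_{s_5}$, which commute with it, and cancel $\sigma_{s_4}\sigma_{s_5}$ on the right. Then multiply by $\sigma_s$ and move $\sigma_s^{\pm1}$ past $\sigma_{s_1}\sigma_{s_2}\sigma_{s_3}$. What remains is
\[
\sigma_{s_1}\sigma_{s_2}\sigma_{s_3}\sigma_{t_0}\sigma_s=\sigma_s\sigma_{t_1}\sigma_{s_1}\sigma_{s_2}\sigma_{s_3}.
\]
The vertices $(a_0,\dots,a_5)=(t_1,s_1,s_2,s_3,t_0,s)$ form, in this cyclic order, the hexagon $\Ps_{s_5}$. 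The two sides are $c_0=\sigma_{a_1}\cdots\sigma_{a_5}$ and $c_4=\sigma_{a_5}\sigma_{a_0}\sigma_{a_1}\sigma_{a_2}\sigma_{a_3}$, where $c_k$ is the product of the five generators following $a_k$. By (Rot$_{s_5}$), and because $\sigma_{s_5}$ commutes with all the $\sigma_{a_i}$, conjugation by $c_0$ sends $\sigma_{a_i}$ to $\sigma_{a_{i+1}}$. Hence $c_0=c_0c_0c_0^{-1}=c_1$. Iterating gives $c_0=c_1=\dots=c_4$, which is the displayed identity. This is exactly the paper's proof.
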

\begin{proof}
There are only three vertices of $\Ps$ not in $\{s\}\cup\Ps_s$. We index them by $\ZZ/3$ as follows:
for every $i\in \ZZ/3$, $t_i$ is the unique vertex  of $\Ps$ which is connected with $s$, $s_i$ and $s_{3+i}$. So we must show that conjugation with the rotation $\rho_\theta $ takes $\sigma_{t_i}$ to $\sigma_{t_{i+1}}$ or equivalently, that 
$\rho_\theta  \sigma_{t_i}=\sigma_{t_{i+1}}\rho_\theta$. Let us write this out for $i=0$: this means that we want that 
\[
\sigma_s^{-1}\sigma_{s_1}\sigma_{s_2}\sigma_{s_3}\sigma_{s_4} \sigma_{s_5}\sigma_{t_0}\stackrel{?}{=}
\sigma_{t_1}\sigma_s^{-1}\sigma_{s_1}\sigma_{s_2}\sigma_{s_3}\sigma_{s_4} \sigma_{s_5}.
\]
In view of the commutation relations among the generators appearing here,  this amounts to
\[
\sigma_{s_1}\sigma_{s_2}\sigma_{s_3}\sigma_{t_0}\sigma_s\stackrel{?}{=}\sigma_s \sigma_{t_1}\sigma_{s_1}\sigma_{s_2}\sigma_{s_3}.
\]
Since  $t_1,s_1, s_2,s_3, t_0, s$ make up a hexagon, this is  a valid identity 
implied by (Rot$_{s_5}$), as both define the same rotation fixing $s_5$.
\end{proof}

\begin{corollary}\label{cor:artin quotient}
The group $G(\Ps)$ is a $\aut(\Ps)$-invariant quotient of $\ar(\Ps)$  characterized by the property that conjugation with $\rho_\theta$ permutes the 
Artin generators according to $\theta$. In particular $\aut(\Ps)$ acts on this quotient  by inner automorphisms. 
\end{corollary}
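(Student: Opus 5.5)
The plan is to show that the subgroup $\bar A$ of $G(\Ps)$ generated by the images of the Artin generators $\sigma_{s'}$ is normal, is preserved by $\aut(\Ps)$, and already contains the image of $\aut(\Ps)$. Then the map $\ar(\Ps)\to G(\Ps)$ is surjective. Its kernel is the normal closure (in $\ar(\Ps)$) of the relations obtained by rewriting the rotation relations inside $\ar(\Ps)$, so $G(\Ps)$ is the stated quotient of $\ar(\Ps)$.

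The first step is that $\bar A$ contains $\aut(\Ps)$. In $\ar(\Ps)\rtimes\aut(\Ps)$ I would read the rotation relation (Rot$_s$) as saying that the element $\theta^{-1}\rho_\theta$ centralizes every $\sigma_{s'}$ with $s'\in\Ps_s$. Here $\rho_\theta=\sigma_s^{-1}\sigma_{s_1}\cdots\sigma_{s_5}$ is a word in the Artin generators, and $\theta$ denotes the graph automorphism regarded as an element of the semidirect product. Imposing that relation in $G(\Ps)$ should be set up (as in the definition) as the identification $\theta=\rho_\theta$, equivalently $\rho_\theta\sigma_{s'}\rho_\theta^{-1}=\theta\sigma_{s'}\theta^{-1}=\sigma_{\theta(s')}$. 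I would check against the relation \eqref{EqRelation} in the introduction that this is the intended reading: there $\rho$ itself is set equal to a word in the $\sigma$'s. Once $\theta$ equals a word in the generators, every $\aut(\Ps)$-conjugate of $\theta$ does too, since conjugating by $g\in\aut(\Ps)$ permutes the generators. As noted just before the statement, the order-$6$ rotations form a conjugacy class that generates $\aut(\Ps)\cong\Sfrak_5$, so all of $\aut(\Ps)$ maps into $\bar A$.

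The second step is that $\bar A$ is all of $G(\Ps)$: since $G(\Ps)$ is generated by the $\sigma$'s together with $\aut(\Ps)$, and both now lie in $\bar A$, the map $\ar(\Ps)\to G(\Ps)$ is surjective. The kernel consists of the relations expressing that each $\rho_{\theta'}$, for every vertex and every rotation $\theta'$, conjugates the generators according to $\theta'$, together with the relations among the $\rho_{\theta'}$ forced by the group structure of $\aut(\Ps)$. Lemma \ref{lemma:} supplies the action on all vertices, not only those in $\{s\}\cup\Ps_s$, and this set of relations is visibly $\aut(\Ps)$-invariant. The relations among the $\rho_{\theta'}$ are automatic modulo the conjugation relations only up to the center of the quotient, so I would state the characterization as: $G(\Ps)$ is the quotient of $\ar(\Ps)$ by the $\aut(\Ps)$-invariant normal subgroup generated by these relations, with the $\rho_{\theta'}$ then realizing $\aut(\Ps)$.

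The last clause then follows: $g\in\aut(\Ps)$ acts on $G(\Ps)$ by conjugation by its image, which is a product of the $\rho_{\theta'}$ and hence an element of the quotient of $\ar(\Ps)$. So $\aut(\Ps)$ acts by inner automorphisms.

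The main obstacle is the bookkeeping of what exactly is imposed, namely whether $\theta$ is identified with $\rho_\theta$ or only the conjugation action is imposed. If only the conjugation action were imposed, $\aut(\Ps)$ would not obviously lie in the image of $\ar(\Ps)$. I would resolve this by matching the definition with \eqref{EqRelation}, where $\rho\sigma_v$ equals a product of generators, so $\rho$ is identified with a word in the generators.
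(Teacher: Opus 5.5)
Your proposal is correct and follows the paper's argument. The relation identifies $\theta$ with the word $\rho_\theta$, so every $\aut(\Ps)$-conjugate of $\theta$ also becomes a word in the $\sigma$'s; since this conjugacy class generates $\aut(\Ps)\cong\Sfrak_5$, the map $\ar(\Ps)\to G(\Ps)$ is onto and $\aut(\Ps)$ acts by inner automorphisms. Your extra remark is a fair sharpening of the paper's loose ``characterized by'' wording, which its one-line proof does not address: the kernel must also contain the relations among the $\rho_{\theta'}$ coming from the group law of $\aut(\Ps)$, and the conjugation property alone gives these only modulo the center.
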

\begin{proof}
This follows from the fact that $\aut(\Ps)\cong \Sfrak_\Vcal$ is generated by the conjugacy class of $\theta$.
\end{proof}

The Artin generators $\sigma_s$ belong to a single conjugacy class of $\Art(\Ps)$ which generates $\Art(\Ps)$.
In view of Corollary \ref{cor:artin quotient} the image of this conjugacy class in $G(\Ps)$ has the same property. We shall refer to its elements  as \emph{simple elements} of $G(\Ps)$.

\subsection{Maps associated with the Sylvester polytope}
We return to the Clebsch extension defined in the previous section.
The space  $\Pi^\st$ is contractible and so we may regard the map 
\[
\diamond: \Pi^\st\to \N^C(L)
\]
associated with the identity as the base point of $\N^C(L)$. We use this base point also for some of its (sub)quotients. We write $W\diamond$ for the $W$-orbit of $\diamond$. Note that there is a subtlety here: any 
$g\in\aut(\Ps)\ssm \{1\}$ changes $\diamond$ (but not its image) and so 
the  $W$-orbit  of $\diamond$ is a \emph{free} orbit. 

The inclusion $\N(L)\subset \N^C(L)$ defines an isomorphism on fundamental groups. So
the  fundamental group $\pi_1(\N^C(L),\diamond)$ can be identified with the 
fundamental group of $\M_\cub(L)$. Although $\N^C$ is not an orbifold, its  orbifold fundamental group
 can now be unambiguously defined (and 
identified with the orbifold fundamental group of $\M_\cub$) as 
the fundamental groupoid  of $\N^C(L)$ restricted to $W\diamond$, denoted  $\Pi(\N^C(L),W\diamond)$, but endowed with the $W$-action. This is indeed a group:
for  $\g, \g'\in \Pi(\N^C,W\diamond)$, the product $\g'\g$ is the path composite of $w(\g')$ and $\g$, where 
$w\in W$ is such that $\g$ has end point $w(\diamond)$ (recall that we use the categorical convention for path composition).

For $s\in \Ps$, the orbifold fundamental group of $\N_s^+$ can also be described in these terms:
the path should then stay in the $C_W(s)$-orbit of $(\N_s^+, \diamond)$, denoted
\[
\N^C(C_W(s)),C_W(s)\diamond)
\]
  The orbifold fundamental group of $\N_s^+$ is then $\Pi(\N^C(C_W(s)),C_W(s)\diamond)$ endowed with its $C_W(s)$-action. As we will see below, the advantage of this description is that the geometry now smoothly translates into group theory. 
\\  
 
In order to make the connection with the presentations found in Section \ref{sect:quantics} we shall use (often  tacitly) Corollary \ref{cor:real component} and its subsequent Remark \ref{rem:real component}.

Recall that for  any reflection $s$ of $W$,  the building block $\N_s$ comes with  a circle action. The reflection $s$ induces the  order 2 element  of that action.  Assume now that   $s\in \Ps$. Then the  path from $\diamond$ to $s(\diamond)$ which follows half a circle orbit in $\N_s$ in the  positive direction to its opposite defines an element of $\Pi(\N^C(L), W\diamond)$ that  we  shall denote by $\hat\sigma_s$. Observe  that this semicircle can be taken to lie  in $\N_S$ for any set $S$ of pairwise commuting reflections in $W$ which contains $s$ and so 
$\hat\sigma_s$ has a representative of that type. This implies that if $s'$ is also in $\Ps$, then $\hat\sigma_s$  and $\hat\sigma_{s'}$ commute and of from $\diamond$ to $ss'(\diamond)=s's(\diamond)$.
The element $\sigma_s$ and the six elements  $\hat\sigma_{s'}$ indexed by the hexagon $\Ps_s$ thus obtained  generate the orbifold fundamental group of $C_W(s)\bs \N_s$ (in the groupoid incarnation above). We have the braid relation  if $s'$ and $s''$ are connected in  $\Ps_s$, the
 $\hat\sigma_{s'}\hat\sigma_{s''}\hat\sigma_{s'}=\hat\sigma_{s''}\hat\sigma_{s'}\hat\sigma_{s''}$ and homotopic paths from $\diamond$ to $s's''s'(\diamond)=s''s's''(\diamond)$. So the assignment $\sigma_{s'}\mapsto \hat\sigma_{s'}$ (with $s'\in \Ps_s\cup\{s\}$) produces an isomorphism 
\[
 \Phi_s: G(\Ps_s)\cong \Pi(\N^C(C_W(s)),C_W(s)\diamond)\cong \pi^\orb_1(C_W(s)\bs \N_s).
 \] 
By the very definition of $G(\Ps)$, this  implies that the  same assignment (but with now $s'\in \Ps$) gives a homomorphism 
$\Phi: G(\Ps)\to  \Pi(\N^C(L),W\diamond)\cong \pi^\orb_1(\N)\cong \pi^\orb_1(\M_\cub)$.

\begin{theorem}\label{thm:main}
The homomorphism 
\[
\Phi: G(\Ps)\to  \Pi(\N^C(L),W\diamond)\cong \pi^\orb_1(\M_\cub)
\]
is an isomorphism.
\end{theorem}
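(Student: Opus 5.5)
The plan is to compute $\pi_1^\orb(\N^C)$ by a van Kampen argument over the closed cover of $\N^C(L)$ by the building blocks $\N_s$ together with the attached copies of $\Pi^\st$. We then compare the resulting presentation with that of $G(\Ps)$. By Corollary \ref{cor:boundary with corners} and the discussion after Definition \ref{def:clebschext}, the target is the groupoid $\Pi(\N^C(L),W\diamond)$ with its $W$-action. So the statement amounts to two claims: the images $\hat\sigma_s$, $s\in\Ps$, generate this groupoid-group, and every relation in it is already a consequence of the relations defining $G(\Ps)$.

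\emph{Surjectivity.} The copy of $\Pi^\st$ through $\diamond$ is contractible, and its boundary meets every building block $\N_s$ with $s\in\Ps$. Moreover $W$ acts transitively on reflections and $K$ has a strict fundamental domain (the $3$-simplex of $|bK|$). Hence every path class in $\N^C(L)$ between points of $W\diamond$ can be deformed to a concatenation of pieces, each of which either stays inside a single $\N_{w(s)}$ with $s\in\Ps$ or crosses a copy of $\mathring\Pi$. Conjugating by the $W$-action, each piece lying in $\N_{w(s)}$ becomes an element of $\Pi(\N^C(C_W(s)),C_W(s)\diamond)$. That group is the image of $\Phi_s$ and is therefore generated by the $\hat\sigma$'s. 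Inside each block, transitions are governed by the nerve $K$, and the connectivity of $K$ shows that all these pieces are reached from $\diamond$.

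\emph{Injectivity.} I would build an explicit inverse. Construct the ``developing'' space
\[
\widetilde\N=\Big(G(\Ps)\times\coprod_{s\in\Ps}\N_s\ \sqcup\ G(\Ps)\times\Pi^\st\Big)\Big/\!\sim,
\]
whose gluings over the codimension-$k$ corners $\N_S$ are dictated by the subgroups $G(\Ps_s)\subset G(\Ps)$ via the isomorphisms $\Phi_s$. Here one uses that the stabilizer data of $\N_S$ are generated by the commuting $\hat\sigma_{s'}$, $s'\in S$. One then shows that $\widetilde\N\to\N^C$ is an orbifold covering with deck group $G(\Ps)$. Equivalently, one checks the van Kampen relations: every relation of $\pi_1^\orb(\N^C)$ is supported either in a single $\N_s$, where it is a relation of $G(\Ps_s)$ (Theorem \ref{thm:central ext} via $\Phi_s$, and these hold in $G(\Ps)$ by definition together with Lemma \ref{lemma:}), or on a pairwise intersection $\N_s\cap\N_{s'}$ for commuting $s,s'$. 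The latter is a corner whose fundamental group is generated by the images from both sides compatibly, by Lemma \ref{lemma:symmetry} and Proposition \ref{prop:normal bundle identification}(ii), so it imposes only commutation relations that are already Artin relations. Higher nerve cells ($2$- and $3$-simplices of $K$) contribute no new relations to $\pi_1$. The copies of $\Pi^\st$ are $3$-cells glued along simply connected boundaries, so they also add nothing.

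The main obstacle is the injectivity bookkeeping: one must make sure that the relations coming from $\N_{w(s)}$ with $w(s)\notin\Ps$, which are transported to $\Ps$ by paths through other blocks, are consequences of (Rot$_s$) and the Artin relations. I would handle this with a fundamental-domain argument. Since a $3$-simplex of $|bK|$ is a strict fundamental domain for $W$ on $|K|$, Haefliger's complex-of-groups theory (or Bass--Serre in dimension two) expresses $\pi_1^\orb(\N)$ as the colimit of the vertex groups $\pi_1^\orb(C_W(s)\bs\N_s)\cong G(\Ps_s)$ and the edge and corner groups along a single flag. Because $\Pi^\st$ realizes that flag based at $\diamond$, with its facets in $\Ps$, this colimit is exactly the amalgam of the $G(\Ps_s)$ for $s\in\Ps$ over their commuting intersections, which is $G(\Ps)$.
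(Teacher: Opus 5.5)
Your proposal follows essentially the same route as the paper: surjectivity by writing every element as a gallery of wall crossings, each realized inside a single block through the isomorphisms $\Phi_s$, and injectivity by localizing elementary homotopies to single blocks (relations of $G(\Ps_s)$) and to the codimension-two corners $\N_s\cap\N_{s'}$ (commutation relations), with the attached copies of $\Pi^\st$ changing nothing. Your Haefliger detour for the ``main obstacle'' is unnecessary and, as written, misstated: the paper simply notes that a relation supported in $\N_{w(s)}$ is the $w$-translate of one in $\N_s$ with $s\in\Ps$, so it holds in a $G(\Ps)$-conjugate of the image of $G(\Ps_s)$, which is all injectivity needs; a single-flag fundamental domain would instead give one vertex group $G(\Ps_s)$ plus corner groups that contain the permutations of $S$, not an amalgam of the ten groups $G(\Ps_s)$, $s\in\Ps$.
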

\begin{proof}
Since $\Phi_s$ is an isomorphism, every path which stays in the $C_W(s)$-orbit of
$(\N_s,\diamond)$ is in the image of $\Phi_s$. This $C_W(s)$-orbit  contains all the $W$-translates of the $\N_s$ which have with $\N_s$ a wall in common. In other words   all the `wall crossings' of $\N_s$ are realized  by elements of $G(\Ps_s)$.
Since any element of $\Pi(\N^C(L),W\diamond)$ is an obtained by a sequence of  wall crossings (a \emph{gallery} of $W$-translates of $\N_s$), it follows that 
$\Phi$ is onto.

We next check that $\Phi$ is injective. This follows if we show that every relation in $ \Pi(\N^C(L),W\diamond)$ is a product of relations, with each factor realized in the image of $G(\Ps_s)\to G(\Ps)$ or one of its $G(\Ps)$-conjugates. For this it suffices to observe
that every elementary homotopy   
$\Pi(\N^C(L),W\diamond)$ is reproduced in a $G(\Ps)$-conjugate of  $G(\Ps_s)$. By an elementary homotopy we mean one that is defined by a codimension 2 face of $\N(L)$. Such a codimension 2 face is $W$-conjugate to a codimension face of the form $\N_s\cap \N_{s'}$ for some  distinct $s,s'\in \Ps$ and hence takes indeed place in $G(\Ps)$-conjugate of  $G(\Ps_s)$ (the relation being just a commutativity relation).
\end{proof}

\subsection{Monodromy quotients of $G(\Ps)$} 
It will be convenient to have the following notion at our disposal.

\begin{definition}%\label{def:}
For a positive  integer $\ell$, the \emph{$\ell$-truncation}  of $G(\Ps)$ is the quotient $G(\Ps)\twoheadrightarrow G(\Ps)_\ell$ obtained by dividing out by the normal subgroup by the $\ell$th power of a simple generator. 
\end{definition}

In view of the interpretation  $G(\Ps)$  as the orbifold fundamental  group of $\M_\cub$ it follows that this quotient defines a connected unramified orbifold  covering $\M_\cub[\ell]\to \M_\cub$ with covering group $G(\Ps)_\ell$. Since $\Ds_\cub\subset \M^\st_\cub$ has only normal crossings (in the orbifold sense), this covering extends  (in the complex analytic orbifold setting) as a ramified  $G(\Ps)_\ell$-covering:
\[
\begin{tikzcd}
\M_\cub [\ell]\dar\arrow[r, hook]  & \M^\st_\cub[\ell]\dar\\
\M_\cub\arrow[r, hook] & \M^\st_\cub 
\end{tikzcd}
\]
with simple cyclic order $\ell$ ramification along $\Ds_\cub$. We shall identify these coverings  for $\ell=2$ and  
$\ell=6$ 
and identify the truncations $G(\Ps)_2$ and $G(\Ps)_6$  as monodromy groups.

\begin{proposition}\label{prop:squares of generators}
The map which takes a simple element $\sigma_{s}$  of $G(\Ps)$ to $s$ induces an isomorphism $G(\Ps)_2\cong W$ and this is part of an equivariant isomorphism $\M^\st_\cub[2]\cong \M^\st(L)$ of $\M^\st_\cub$-coverings.
In particular, $G(\Ps)_2$ appears as the image of a  monodromy representation. 
\end{proposition}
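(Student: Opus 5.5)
The plan is to identify $G(\Ps)_2$ with the deck group of the marked cover $\M_\cub(L)\to\M_\cub$ and then compare the two coverings.

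\emph{Step 1: a surjection onto $W$.} The covering $\M_\cub(L)\to\M_\cub$ is connected (we have seen $\M_\cub(L)$ is a Zariski open subset of $\PP^2\times\PP^2$), Galois, and has covering group $W$. It is therefore defined by a surjective monodromy homomorphism $\mu:\pi_1^\orb(\M_\cub)\to W$. Composing with the isomorphism $\Phi$ of Theorem \ref{thm:main} gives a surjection $\mu\Phi:G(\Ps)\to W$. By construction $\Phi(\sigma_s)=\hat\sigma_s$ is a half-circle path from $\diamond$ to $s(\diamond)$, so $\mu\Phi(\sigma_s)=s$ for $s\in\Ps$. This is the local monodromy around the component $\N_s$, i.e.\ the Picard--Lefschetz reflection, and it agrees with Corollary \ref{cor:good choice of 6}. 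Since $s^2=1$, the map $\mu\Phi$ factors through $G(\Ps)_2$.

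\emph{Step 2: injectivity of $G(\Ps)_2\to W$.} This is the main obstacle. I would give a presentation-theoretic argument. Killing squares of the generators in $\Art(\Ps)$ gives the Coxeter group $C(\Ps)$ of the Petersen graph. Hence $G(\Ps)_2$ is generated by ten involutions $\bar\sigma_s$ satisfying the Petersen Coxeter relations, together with the rotation relation: conjugation by $\bar\rho_\theta=\bar\sigma_s\bar\sigma_{s_1}\cdots\bar\sigma_{s_5}$ permutes the generators via $\theta$.

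I would then exhibit an $E_6$ Coxeter subdiagram inside $\Ps$. For instance, in the figure take the path $1,3,0,2$, the vertex $8$ attached at $3$, and so on; alternatively a hexagon $\Ps_s$ together with suitable extra vertices. The claim to prove is that the remaining generators are expressible in terms of these six. Concretely, conjugating by $\bar\rho_\theta$ moves a generator around the hexagon, so the extra relations force the affine $\hat A_5$ generator to equal a specific word. It suffices to show that $G(\Ps)_2$ is generated by six $\bar\sigma$'s satisfying the $E_6$ Coxeter relations. Then $|G(\Ps)_2|\le|W(E_6)|=51840$, and surjectivity onto $W$ forces an isomorphism.

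If this combinatorial reduction is awkward, there is a more geometric fallback. Because $\Ds_\cub(L)\subset\M_\cub^\st(L)$ is a normal crossing divisor with ramification index $2$ along every component, $\M_\cub^\st(L)\to\M_\cub^\st$ is the orbifold covering attached to $\ker(\mu)$. If one shows that $\M^\st_\cub(L)$ is simply connected as an orbifold modulo meridians, then $\ker\mu$ is normally generated by squares of meridians, which gives injectivity directly. For this I would use that $\M_\cub^\st(L)$ is a smooth rational variety, namely the complement in the Naruki compactification of $40$ copies of $(\PP^1)^3$, which have real codimension $2$; such a variety is simply connected. Since meridians of the $W$-cover map to squares of simple elements, $\ker\mu$ is the normal closure of $\sigma_s^2$, so $G(\Ps)_2\cong W$.

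\emph{Step 3: the coverings.} Once $G(\Ps)_2\cong W$ compatibly with monodromy, the covering $\M_\cub[2]\to\M_\cub$ is the one defined by $\ker\mu$, namely $\M_\cub(L)$. Its ramified extension with order-$2$ ramification along $\Ds_\cub$ is unique, being determined by normalization. So $\M^\st_\cub[2]\cong\M^\st_\cub(L)$ equivariantly, and $G(\Ps)_2$ appears as the monodromy group on $H_2$ of the Clebsch surface.
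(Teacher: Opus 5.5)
Your Steps 1 and 3 are fine. The gap is in Step 2, which carries the whole content of the proposition, and neither of your two routes establishes it.

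\textbf{The presentation-theoretic route.} It stops exactly where the work begins. From (Rot$_s$) you do get $\sigma_{s_0}=\rho_\theta^{-1}\sigma_{s_1}\rho_\theta$ with $\rho_\theta=\sigma_s^{-1}\sigma_{s_1}\cdots\sigma_{s_5}$, and $\sigma_{t_{i+1}}=\rho_\theta\sigma_{t_i}\rho_\theta^{-1}$. But this only shows that $G(\Ps)_2$ is generated by the seven images of $\sigma_s,\sigma_{s_1},\dots,\sigma_{s_5},\sigma_{t_0}$. The diagram of these seven is $\hat E_6$: centre $s_3$, with arms $s_2s_1$, $s_4s_5$, $t_0s$. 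So you only obtain $G(\Ps)_2$ as a quotient of the infinite affine Weyl group $W(\hat E_6)$, not of $W(E_6)$. To get $|G(\Ps)_2|\le 51840$ you must still show that the remaining relations kill the translation subgroup of $W(\hat E_6)$. For instance, you would need $\bar\sigma_s$ to lie in the subgroup generated by the six $E_6$ generators $\bar\sigma_{s_1},\dots,\bar\sigma_{s_5},\bar\sigma_{t_0}$. Your sketch contains no argument for this; it is precisely the claim that needs proof.

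\textbf{The geometric fallback.} This is in fact the paper's proof: a surjection $G(\Ps)_2\to W$, then triviality of the unramified covering $\M^\st_\cub[2]\to\M^\st_\cub(L)$, i.e.\ simple connectivity of $\M^\st_\cub(L)$. However, your justification of that simple connectivity is wrong. The $40$ copies of $(\PP^1)^3$ are divisors in the Naruki compactification; they are components of $\widehat\Ds_\cub(L)$. Deleting a subset of real codimension $2$ can create fundamental group, as $\CC^\times\subset\CC$ shows. The removal principle you invoke holds only in real codimension $\ge 3$. Likewise, a non-complete smooth rational variety need not be simply connected.

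The paper argues as follows. Since $\M_\cub(L)=\PP^2\times\PP^2\ssm D$, the set $\PP^2\times\PP^2\ssm D_\sing$ is a dense Zariski-open subset of $\M^\st_\cub(L)$. It is simply connected because $D_\sing$ has complex codimension $\ge 2$, and hence $\M^\st_\cub(L)$ is simply connected.

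Alternatively, your Naruki picture can be repaired:
\begin{enumerate}
\item[(a)] $\widehat\M_\cub(L)$ is smooth, projective and rational, hence simply connected.
\item[(b)] A meridian of an exceptional $(\PP^1)^3$ is a fibre of the circle bundle of $\Ocal(-1,-1,-1)$.
\item[(c)] That circle bundle is simply connected, because the boundary map $\pi_2((\PP^1)^3)\cong\ZZ^3\to\pi_1(S^1)\cong\ZZ$ is $(a,b,c)\mapsto\pm(a+b+c)$, which is onto.
\end{enumerate}
So the meridians die, and the complement of the exceptional divisors is simply connected. With either fix, your fallback becomes a complete proof along the same lines as the paper's.
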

\begin{proof}
Since $\M_\cub (L)$ is connected and  $\M_\cub (L)\to \M_\cub$ is  unramified as an orbifold covering with covering group $W$, we must have a surjection $G(\Ps_s)\to W$. It is clear that this maps a simple element to a reflection and hence factors through a surjection $G(\Ps_s)_2\to W$. By elementary covering theory,  the covering 
$\M_\cub [\ell]\to \M_\cub$ lifts to an unramified  orbifold  covering $\M_\cub [\ell]\to \M_\cub (L)$.
Since both source an target  have order two ramification along $\Ds_\cub$, it follows that this extends to an unramified covering $\M^\st_\cub [\ell]\to \M^\st_\cub (L)$.  It remains to show that this  covering is trivial.
For this it suffices to observe  $\M^\st_\cub (L)$ is simply connected. This is well-known, but for completeness we give  a quick proof.

As  in  Subsection \ref{subsect:marked stable} we identify $\M_\cub (L)$ with the set of pairs $(p_5,p_6)\in \PP^2\times\PP^2$ which together  with the  standard projective basis elements (the vertices of a tetrahedron) form a $6$-tuple in general position. 
This is a Zariski open subset of the form  $\PP^2\times\PP^2\ssm D$,   where $D$ is a divisor of  $\PP^2\times\PP^2$.  We then observe that the complement of  the singular part $D_\sing$ of $D$  
parametrizes  stable marked cubic surfaces and is open-dense in $\M^\st_\cub(L)$. 
Since $D_\sing$ is of complex codimension $\ge 2$ in $\PP^2\times\PP^2$, its  
complement
is simply connected. It then follows that $\M^\st_\cub(L)$ is also simply connected.
\end{proof}

We next show that something similar is true for the pair $(\M^\st_\cub[6],G(\Ps)_6)$.
 This comes from the  construction  which assigns to a smooth cubic surface $X\subset \PP^3$ the 
$\mu_3$-cover  $Y_X\to \PP^3$  which is totally ramified along $X$. In terms
of equations: if $X\subset \PP^3$ is defined by the homogeneous  cubic form  $F\in \CC[z_0,z_1, z_2,z_3]$, then
$F-z_4^3$ is a defining equation for $Y_X$. So $Y_X$ is a smooth cubic $3$-fold with 
$\mu_3$-action which 
has $X$ as its fixed point set. The  group $H_3(Y_X)$ is free abelian of rank $10$ and the  intersection pairing makes it a unimodular symplectic lattice. It comes with an action of $\mu_3$ which preserves the intersection pairing. Strictly speaking, the notation  $Y_X$ is a bit misleading as $Y_X$ is determined by $X$ only up to a covering transformation (i.e., an element of $\mu_3$).

Let us abbreviate  $\omega:=e^{2\pi\sqrt{-1}/3}$ and denote by  $\Ecal$ for the Eisenstein ring $\ZZ[\omega]/(\omega^2+\omega+1)$ (as embedded in $\CC$). We write  $\Ecal^\pm$ when $\Ecal$ is endowed with the hermitian form $(x,y)\in \Ecal\mapsto \pm x\overline y$. %The symplectic $\Ecal$-module $H_3(Y_X)$ is isomorphic with the symplectic sum of one  copy of $\Ecal$ and four copies of $(\Ecal)'$, where the symplectic pairing on $\Ecal$ is given by its orientation and the one on $(\Ecal)'$ by the opposite orientation. 
So $\Lambda:=H_3(Y_{X_C})$ is a $\Ecal$-module. We  turn  this into a hermitian $\Ecal$-module by the form
\[
h(a, b):= \frac{(a\cdot (\omega b))\omega +  (a\cdot (\overline\omega b))\overline \omega}{\omega-\overline \omega}\in \Ecal.
\]
According to \cite{act:cubic_surfaces}, this makes $\Lambda$ isomorphic with  $\Ecal^-\perp 4\Ecal^+$. So if we extend $h$  to a nondegenerate hermitian form $h_\CC$ on $\Lambda_\CC:=\CC\otimes_{\Ecal} H$, then it  has signature $(4,1)$. The  open subset $\BB\subset \PP(\Lambda_\CC)$ defined by the vectors $v\in \Lambda_\CC$ with $h(v,v)<0$ is a copy of the  complex unit $4$-ball. It is an orbit for the  action of   $\PU(\Lambda_\CC)$ on $\PP(\Lambda_\CC)$ and $\PU(\Lambda_\CC)$ acts properly on it. In fact, $\BB$ is  a symmetric domain for 
$\PU(\Lambda_\CC)$.  

Because of the aforementioned ambiguity, the monodromy group $\G$ of this family takes its values in $C_{\Sp(\Lambda)}(\mu_3)/\mu_3=\U(\Lambda)/\mu_3$ (rather than $\U (\Lambda)$). Allcock-Carlson-Toledo 
\cite[Thm.\ 2.14]{act:cubic_surfaces} prove that $\G$ is of index $2$ in $\U(\Lambda)/\mu_3$ and that the composite map 
\[
\G\subset \U(\Lambda)/\mu_3\to \U(\Lambda)/\mu_6=\PU(\Lambda)
\]
is an isomorphism. So this realizes $\G$ as an arithmetic  group faithfully acting on $\BB$.
It is not hard to check that a simple loop in $\M_\cub$ around a smooth point of $\Ds_\cub$ is mapped to a hexaflection
(a pseudoreflection of order $6$). Such a hexaflection has as a fixed point set  in $\BB$ a hyperball. 
Since $\G$ is the image of a monodromy factorization,  we have a group epimorphism $G(\Ps)\twoheadrightarrow \G$. The simple elements of $G(\Ps)$ map to  hexaflections,  and so this epimorphism  factors though an   epimorphism $G(\Ps)_6\to \G$.

In \cite{act:cubic_surfaces}  the following is proved:  these  hyperballs (fixed point sets of hexaflections) cross normally,  their union $\Ds_\BB$ is locally finite and we have an  isomorphism of varieties
\[
\G\bs(\BB, \Ds_\BB)\cong (\M^\st_\cub,\Ds_\cub).
\]

\begin{proposition}\label{prop:cubes of generators}
This map defines  an isomorphism $G(\Ps)_6\cong \G$ 
and this is part of an equivariant isomorphism 
$(\M^\st_\cub[6],G(\Ps)_6)\cong (\BB, \G)$
of $\M^\st_\cub$-coverings.
In particular, $G(\Ps)_6$
appears as the image of a  monodromy representation. 
\end{proposition}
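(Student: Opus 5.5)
We follow the proof of Proposition \ref{prop:squares of generators}, with the ball $\BB$ in the role of $\M^\st_\cub(L)$. The plan is to build the covering map first and then deduce the group isomorphism from a simple-connectivity statement.

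First we construct a map of coverings. Put $\BB^\circ:=\BB\ssm\Ds_\BB$. The isomorphism $\G\bs(\BB,\Ds_\BB)\cong(\M^\st_\cub,\Ds_\cub)$ exhibits $\BB^\circ\to\M_\cub$ as an unramified orbifold covering with deck group $\G$. The reason is that the $\G$-stabilizers of points off $\Ds_\BB$ are exactly the orbifold isotropy groups of $\M_\cub$; this uses that $\G$ acts faithfully and that the only elements with fixed hyperplanes are powers of hexaflections. The covering is connected, so it corresponds to the kernel of a surjection $\pi_1^\orb(\M_\cub)\cong G(\Ps)\to\G$. This surjection is the monodromy epimorphism above, and we already know it factors through $G(\Ps)_6$.

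Next we compare the two coverings. Covering theory gives an unramified $G(\Ps)_6$-equivariant orbifold covering $\M_\cub[6]\to\BB^\circ$, with deck group the kernel $K$ of $G(\Ps)_6\to\G$. Near a generic point of $\Ds_\BB$, the stabilizer in $\G$ is cyclic of order $6$, generated by the hexaflection, so $\BB\to\M^\st_\cub$ has ramification index exactly $6$ along $\Ds_\cub$. By construction, $\M^\st_\cub[6]\to\M^\st_\cub$ also has ramification index $6$ there. Hence the covering $\M_\cub[6]\to\BB^\circ$ has trivial local monodromy around the mirrors. Because the mirrors cross normally, the same holds near the deeper strata, where the local groups are products of cyclic groups of order $6$ on both sides. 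Therefore the covering extends to an unramified covering $\M^\st_\cub[6]\to\BB$. Since $\BB$ is a ball and hence simply connected, this covering is trivial, so $K=1$. This gives $G(\Ps)_6\cong\G$ together with the equivariant isomorphism $(\M^\st_\cub[6],G(\Ps)_6)\cong(\BB,\G)$. The monodromy interpretation then follows because $\G$ is by definition the monodromy group of the family $Y_X$.

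The main obstacle is the local comparison along $\Ds_\cub$, which must work in the orbifold sense. Two facts have to be checked. First, the orbifold structure of $\M^\st_\cub$ along $\Ds_\cub$ induced from $\BB$ must agree with the one used to define $\M^\st_\cub[6]$: the covering is an isomorphism of varieties, but the orbifold chart could a priori differ, for instance if generic points of the discriminant had extra isotropy. Second, the local fundamental group of $\BB^\circ$ at a point of multiplicity $k$ must be $\ZZ^k$, generated by meridians that map to the simple elements of $G(\Ps)$. We would settle both points by citing the normal-crossing statement of \cite{act:cubic_surfaces}: the stabilizer of a generic mirror point is $\mu_6$, acting by the hexaflection. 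Any residual finite isotropy commutes with the hexaflections and acts the same way on both coverings, so it does not affect the extension argument.
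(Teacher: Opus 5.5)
Your route is the paper's. The $G(\Ps)_6$-cover of $\M_\cub$ factors through an unramified cover of $\BB\ssm\Ds_\BB$. That cover has trivial monodromy around each mirror, because a small loop around a mirror maps to the sixth power of a simple element. It extends over all of $\BB$ because $\Ds_\BB$ has normal crossings, and it is trivial because $\BB$ is simply connected.

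One supporting claim is false, and you use it to see that $\BB\ssm\Ds_\BB\to\M_\cub$ is an orbifold covering. It is not true that the only elements of $\G$ with a fixed hyperplane are powers of hexaflections. For instance, a transposition $(ij)\in\Sfrak_\Vcal=\aut(X_C)$ fixes pointwise the hyperplane $t_i=t_j$ of the Sylvester family through the Clebsch point; these are surfaces with an Eckardt point. So it acts on $\BB$ as an order-$2$ complex reflection whose mirror meets $\BB\ssm\Ds_\BB$. Thus $\M_\cub$ has codimension-one isotropy, and an isomorphism of varieties $\G\bs(\BB\ssm\Ds_\BB)\cong\M_\cub$ does not by itself give an orbifold covering. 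What is needed, and what the paper uses tacitly, is the Allcock--Carlson--Toledo theorem that the period map is an isomorphism of orbifolds: the $\G$-stabilizer of a period point is the automorphism group of the surface. Cite that and the argument is complete.

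The worries in your last paragraph then go away, though two of your statements there need correcting:
\begin{enumerate}
\item[(a)] The two orbifold structures along $\Ds_\cub$ genuinely differ. The ball quotient has $\mu_6$ isotropy there, whereas a generic one-nodal cubic has no automorphisms.
\item[(b)] Isotropy at deeper strata need not commute with the hexaflections, nor be a product of cyclic groups of order $6$. At the Cayley point, $\Sfrak_4$ permutes four mirrors.
\end{enumerate}
Neither matters, because $\M_\cub[6]\to\BB\ssm\Ds_\BB$ is an honest covering of a manifold. The only thing that enters is the local group $\ZZ^k$ at a $k$-fold crossing, generated by meridians. Those meridians map to sixth powers of simple elements, not to the simple elements themselves.
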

\begin{proof}
This uses the same argument as the proof of its companion Proposition \ref{prop:squares of generators}: 
The epimorphism  $G(\Ps)\twoheadrightarrow G(\Ps)_6$
defines a connected  orbifold  covering 
of $\BB\ssm \Ds_\BB$. Our assumption implies that  this covering extends across  every smooth point  of $\Ds_\BB$ without ramification. Since $\Ds_\BB$ has normal crossings, this is even true everywhere and we thus obtain a  covering of $\BB$ without ramification. But $\BB$ is contractible and so this covering must be trivial and hence 
$G(\Ps)_6\to \G$ is an isomorphism.
\end{proof}

\begin{remark}
    Baldi, Farb, Javanpeykar and Stover \cite{BFJS} showed that the
    kernel of $\pi_1(\M_\cub)\to\Gamma$ is a characteristic subgroup, and used this
    to show that complex conjugation is the only nontrivial automorphism of $\M_\cub$
    as a complex-analytic orbifold.
\end{remark}

We next address the intermediate case $G(\Ps)_3$. This  seems  not to be covered by the literature.

The ring  $\Ecal/2\Ecal$  may be identified with  the Galois field $\FF_4$  with  the elements of $\FF_4$ being represented by $\{0,1,\omega, \omega^2=\overline\omega\}$. This takes complex conjugation in  $\Ecal$ to the  Frobenius automorphism of $ \FF_4$ (which we shall denote by $x\mapsto x'$).  We regard $\FF_4$  as endowed with the hermitian  form $(x,y)\in \FF_4\mapsto xy'$ (note that  $xx'\in \FF_2$).

We choose an hermitian $\Ecal$-lattice isomorphism  $\Lambda\cong \Ecal^-\perp 4\Ecal^+$. 
This determines  an isomorphism of  $\Lambda_2:=\Lambda/2\Lambda$ with  $5\FF_4$ as hermitian $\FF_4$-vector spaces. 
By \cite[Thm. 7.22]{act:cubic_surfaces} the group $\U(\Lambda)$ is generated by $\{\pm I\}$ and the hexaflections in the `short roots':
a short root means a vector $v\in \Lambda$ with $h(v,v)=1$, and its hexaflection scales it by $-\omega^2$ and is the identity on $v^\perp$.
Since $(-\omega^2)^3=-1\equiv1$, each hexaflection reduces to a triflection in an anisotropic element of $\Lambda_2$.  
One can check that every anisotropic element is the image of a short root. Also, their triflections are known to generate  
$\U(\Lambda_2)\iso \U_5(\FF_4)$.  
Therefore $\PU(\Lambda)\to\G_2:=\PU(\Lambda_2)$ is surjective.
The latter group has order  $2^{10}.3^5. 5.11$, got by counting the number of 
orthonormal bases of 
$5\FF_4$ and dividing by~$3$.
%This group 
%coincides with $\G_2:=\PU(\Lambda_2)$ because $\dim_{\FF_4}\Lambda_2=5$ and  $|\FF_4^\times|=3$ are relatively prime.
It also happens to be simple with the confusing ATLAS name ``$\U_5(2)$''.

Since the surjection 
$G(\Ps)_6\cong\PU(\Lambda)\to\PU(\Lambda_2)$ sends the Artin generators to order~$3$ elements,  
it factors through an epimorphism $f:G(\Ps)_3\to\PU(\Lambda_2)$.
Consider the corresponding
%The epimorphism \eqref{eqn:mod2} defines an 
intermediate Galois cover
\begin{equation}\label{eqn:covers}
  (\M^\st_\cub[3], \Ds_\cub[3])\xrightarrow{\pi}(\M^\st_\cub[\G_2], \Ds_\cub[\G_2])\to (\M^\st_\cub, \Ds_\cub)  
\end{equation}
where $\pi$  is the formation of the orbit space for the action of $\ker (f)$ on the first pair. The second map is a $\G_2$-covering which has order three ramification over $\Ds_\cub$. Since the composite is universal for order $3$ ramification along $\Ds_\cub$, the map $\pi$ is unramified and is in fact a universal cover of $\M^\st_\cub[\G_2]$.
Hence the following are equivalent:

\begin{proposition}\label{prop:G3}
The following  three equivalent properties hold:
\begin{enumerate}\label{enum:eq}
    \item [(i)] $f$ is an isomorphism,
    \item[(ii)] $\pi$ is an isomorphism,
    \item[(iii)] $\M^\st_\cub[\G_2]$ is simply connected.
\end{enumerate}
\end{proposition}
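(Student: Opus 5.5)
The plan is to prove (i) by showing that $|G(\Ps)_3|\le|\G_2|$. Since $f$ is already surjective, this inequality forces $f$ to be an isomorphism, and (ii) and (iii) then follow from the equivalences recorded just before the statement. The natural way to bound the order is to work with a vertex-stabilizer-type subgroup and a coset count, as follows.

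Fix a vertex $s$ of $\Ps$ and let $H\subset G(\Ps)_3$ be the image of $G(\Ps_s)$, the local group of the building block $\N_s$. First I will bound $|H|$. By Theorem~\ref{thm:central ext} of the appendix, $G(\Ps_s)$ is a central extension by $\ZZ$ of the genus-zero mapping class group with six punctures. Killing cubes of the generators turns this into a central extension of the truncation of the spherical braid group on six strands with the cubes of the half-twists killed. I expect this truncation to be finite and closely related to $\U_4(2)\cong \mathrm{PSp}_4(3)$, which is the classical Coxeter--Burau fact that $B_n/\langle\!\langle\sigma^3\rangle\!\rangle$ is finite for small $n$. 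In addition, the central element should have order dividing $3$ once the relation $\sigma^3=1$ is imposed, because a suitable power of $\rho_\theta$ can be written in terms of the generators.

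On the other side, $f(H)$ lies in the $\PU(\Lambda_2)$-stabilizer of the anisotropic point $\bar v\in\PP(\Lambda_2)$ given by the reduction of the short root of $\sigma_s$. This is because every generator of $G(\Ps_s)$ either is the triflection in $\bar v$ or commutes with it, and the rotation $\rho_\theta$ commutes with $\sigma_s$. That stabilizer has order $|\G_2|/176$, since $\PP(5\FF_4)$ has $341$ points, of which $165$ are isotropic and $176$ are anisotropic. I aim to show that $f|_H$ maps $H$ isomorphically onto this stabilizer, by comparing the order bound for $H$ with the order of the stabilizer and using that the local monodromy is surjective onto it.

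Second, I will bound the index $[G(\Ps)_3:H]$ by $176$. Since $G(\Ps)_3$ is generated by the $\sigma_{s'}$, the cosets $gH$ form the vertex set of a Schreier graph. Its transitions are governed by the local relations of Corollary~\ref{cor:artin quotient}. Using $\aut(\Ps)$-symmetry (realized by inner automorphisms) and the fact that $H$ already contains $\sigma_{s'}$ for $s'$ in the hexagon $\Ps_s$, I will check that the orbit of $H$ under the three neighbouring generators $\sigma_{t_i}$ closes up after at most $176$ cosets. The check is to match the cosets with anisotropic points of $\PP(\Lambda_2)$ via $gH\mapsto f(g)\bar v$, and to verify that adjacent cosets go to points that are adjacent in the corresponding orthogonality graph.

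The main obstacle is this coset closure: showing that the Schreier graph has no more vertices than its image. Concretely, one must show that two words whose images send $\bar v$ to the same point already lie in the same $H$-coset. If the hand analysis becomes unwieldy, my fallback is a Todd--Coxeter coset enumeration of $G(\Ps)_3$ over $H$, using the finite presentation from Theorem~\ref{thm:main} together with the relations $\sigma^3=1$. Such an enumeration should terminate with index $176$, and combined with $|H|\le|\G_2|/176$ this gives $|G(\Ps)_3|\le|\G_2|$ and hence (i).
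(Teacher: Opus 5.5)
Your strategy has the same shape as the paper's proof. Since $f$ is onto, you bound $|G(\Ps)_3|$ by (order of a subgroup)$\times$(index from a Todd--Coxeter run) and compare with $|\PU(\Lambda_2)|=2^{10}\cdot3^5\cdot5\cdot11$. The difference is the subgroup, and as written the step carrying the weight is not established.

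The paper takes the subgroup $K$ generated by an $A_3$ path of $\Ps$. This $K$ is a quotient of $G_{25}=B_4/\langle\!\langle\sigma^3\rangle\!\rangle$, whose order $648$ is classical, and the machine returns index $21120$. Your $H$ has index only $176$. But you then need the sharp bound $|H|\le 77760$, since any slack ruins the inequality, and the reason you give does not apply. By Coxeter, $B_n/\langle\!\langle\sigma_1^k\rangle\!\rangle$ is finite iff $1/n+1/k>1/2$. The case $(n,k)=(6,3)$ is exactly the borderline, so $B_6/\langle\!\langle\sigma_1^3\rangle\!\rangle$ is infinite. Since $G(\Ps_s)$ is generated by $\sigma_s$ and five consecutive hexagon generators, it is a quotient of $\ZZ\times B_6$, so finiteness must come from the spherical relations. (Also, the central element is $\sigma_s$ itself, so its cube dies directly; no power of $\rho_\theta$ is involved.)

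One way to fill this gap is as follows.
\begin{enumerate}
\item Let $N\subset\pi_{[6]}$ be the normal closure of the cubes of the half-twists. Let $S\cong B_5/Z(B_5)$ be the index-$6$ stabilizer of one puncture.
\item A cubed half-twist maps to a transposition, so $N$ maps onto $\Sfrak_6$, and hence $SN=\pi_{[6]}$.
\item Therefore $\pi_{[6]}/N\cong S/(S\cap N)$ is a quotient of $B_5$ modulo its center and the cubes of its generators. That group is $G_{32}/Z(G_{32})\cong\mathrm{PSp}_4(3)$, because the full twist maps to $c^5$, which generates the order-$6$ center of $G_{32}$.
\item Since $H/\langle\sigma_s\rangle$ is a quotient of $\pi_{[6]}/N$, this gives $|H|\le 3\cdot 25920=77760$.
\end{enumerate}

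For the index, your hand argument is circular. The map $gH\mapsto f(g)\bar v$ onto the $176$ anisotropic points only shows $[G(\Ps)_3:H]\ge 176$. Its injectivity means $f^{-1}(\mathrm{Stab}(\bar v))=H$, which in particular forces $\ker f\subset H$. That is essentially the statement you are proving, and adjacency checks in the orthogonality graph cannot give it. So the upper bound rests entirely on the Todd--Coxeter enumeration, which you have not run. That is the same kind of machine verification the paper uses, and your numbers are consistent: $77760\cdot 176=|\PU(\Lambda_2)|$, so an enumeration returning $176$ would finish the proof.

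With both repairs your route is a valid alternative. It replaces the paper's purely classical input, $|G_{25}|=648$, with a genuine lemma about the six-point mapping class group. In exchange you get a much smaller enumeration whose cosets have geometric meaning: they are the mirrors modulo $2$.
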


We have not succeeded in finding a conceptual proof.

\begin{proof}
Let $G_{25}$ be the $25$-th exceptional group in Shephard-Todd's list of finite complex reflection groups. It is known that 
$G_{25}$ has order $648$ and 
is isomorphic to the 
quotient of four strand braid group
by the relation
where a braid generator is forced to
have order $3$.
%G_{25} is the reflection group of the Eisenstein $E_6$-lattice 
Consider the subgroup $K$ of $G(\Ps)_3$ generated by an $A_3$ subdiagram inside $\Ps$.
%for example by the images of $\sigma_{4 5},  \sigma_{ 1 3}, \sigma_{2 5}$.
Then $K$ is a quotient of $G_{25}$. 
Coset enumeration\footnote{
The code, available via the link
\url{https://tinyurl.com/5fe77ymf}, runs in the browser at  
\texttt{sagecell.sagemath.org} and takes a few seconds.
The specific presentation of $G(\Ps)_3$
used has 14 generators and 54 relations.
There are four standard generators 
$s_1, \dotsb, s_4$ of the symmetric group
$\Aut(\Ps) = \Sfrak_5$ and ten generators 
$\lbrace \sigma_v \colon v \in \Ps \rbrace$
of $\Art(\Ps)$. There are ten standard 
Coxeter relations defining $\Sfrak_5$, 
forty relations of the form
$s_i \sigma_v s_i^{-1} = \sigma_{s_i v}$ 
specifying the conjugation action of 
$\Sfrak_5$ on $\Art(\Ps)$, 
one relation of the form 
$\sigma_v \sigma_{v'} \sigma_v = \sigma_{v'} \sigma_v \sigma_{v'}$ 
for an edge $\lbrace v, v' \rbrace$
of $\Ps$, one relation of the form 
$\sigma_v \sigma_{v'} = \sigma_{v'} \sigma_v$
for a non-edge $\lbrace v, v' \rbrace$ 
of $\Ps$, one relation of the form \eqref{EqRelation}
and one relation of the form $\sigma_v^3 = 1$.}
on a computer gives 
$\abs{G(\Ps)_3 / K} = 21120$. Therefore
$ \abs{G(\Ps)_3}  \leq 21120 \cdot 648 = 2^{10}\cdot 3^{5} \cdot 5 \cdot 11
= \PU(\Lambda_2)$.
This shows that $f$ is an isomorphism. Its equivalence with the other two properties is clear. 
\end{proof}

\smallskip

We do not know whether such an interpretation exists for any other pair  $(\M^\st_\cub[\ell], G(\Ps)_\ell)$.

\section{Appendix: orbifold fundamental groups involving $\M_{[n]}$}\label{sect:quantics}

\subsection{Positive divisors on $\PP^1$}  We first fix some notation:

\begin{notation}
For a space $Z$ and an integer $n\ge 0$, we denote by  $\Cs_n(Z)\subset Z^n$ the configuration space of $n$ numbered distinct points of $Z$ and by  $\Ss_n(Z):=\Sfrak_n\bs \Cs_n(Z)\subset \Sfrak_n\bs Z^n$ the space of $n$-element subsets of $Z$. 
Note that the group of topological automorphisms of $Z$ acts on the these spaces. We shall use this mostly in a complex algebraic setting. 
\end{notation}

Let $C$ be a curve isomorphic to $\PP^1$. Choose a complex vector space $V$ of dimension 2 and identify 
$C$ with the projective line $\PP(V)$. This  vector space is unique up to a scalar multiplication: 
if $V'$ is any other such vector space, then we have a compatible isomorphism $V\cong V'$ which is unique 
up to a scalar. Note that $\Aut(C)=\PGL(V)$. For an  integer $n\ge 0$, we write $\CC[V]_n$  for the space    of 
homogeneous polynomials on $V$ of degree $n$. This  is a vector space of dimension $n+1$ which 
we shall regard as a  representation of $\GL(V)$ (so the  scalar $\lambda\in \GL(V)$ acts 
on $\CC[V]_n$ as multiplication with $\lambda^{-n}$). Its projectivization $\PP(\CC[V]_n)$ is the space of degree $n$ divisors on 
$C$ and is thus identified with the $n$th symmetric power $\sym^n C$.

We now assume $n\ge 3$. The $\SL(V)$-stable locus  $\CC[V]_n^\st$ consists of the degree $n$ forms  whose divisor in $C$  has all its multiplicities $<n/2$. It defines the open subset $(\sym^n C)^\st\subset \sym^n C$.
It  is clear that $\CC[V]_n^\st$ contains the locus of square free elements (denoted  $\CC[V]_n^\red$) and so $ (\sym^n C)^\st$ contains $\Ss_n(C)$.

The orbit space  $\aut(C)\bs\Ss_n(C)$ is  independent of the choice of $V$  and we bring this out in the notation by denoting it $\M_{[n]}$. Indeed, this can be understood as the moduli space of pairs consisting of a smooth projective genus zero curve and an $n$-element subset of it. We write $\M^\st_{[n]}:=\PGL(V)\bs \PP(\CC[V]_n^\st)$ accordingly and  
denote by $\Ds_{[n]}\subset\M_{[n]}^\st$ the locus pa\-ram\-e\-triz\-ing singular stable divisors. It is a hypersurface (the \emph{discriminant}),  given by an invariant of degree $n(n-1)$ and its  complement $\M_{[n]}$ is an affine orbifold.

When $n$ is odd, $\M^\st_{[n]}$ is projective and when $n$ is even, it admits a projective  (and normal) one-point compactification $\M^\sst_{[n]}$ with the added point representing a divisor  supported by two points, each with multiplicity $n/2$.

 These  varieties admit natural $\Sfrak_n$-covers. Concretely, given  a finite set $P$ of size $n\ge 3$, let $\M_P$ be  the $\aut(C)$-orbit space of injective maps 
 $\varphi: P\hookrightarrow C$ (this is just a copy of the moduli space usually denoted $\Mcal_{0,n}$). 
 It is  a smooth affine variety of dimension $n-3$ on which the permutation group $\Sfrak_P$ acts  with orbit variety $\M_{[n]}$. 
The normalization of $\M^\st_{[n]}$ in $\M_P$ gives us the $\Sfrak_P$-equivariant extension $\M_P\subset \M^\st_P$.
A point $\M^\st_P$  is given by an $\aut(C)$-orbit of maps $\varphi:P\to C$ whose  fibers have  size $<n/2$. 
The discriminant $\Ds_P:=\M^\st_P\ssm  \M_P$ has now several  connected components. These are labeled by 
the  transpositions in $\Sfrak_P$: the associated component is  the divisorial  part of its fixed point set of in $\M^\st_P$.
\smallskip

\subsection{The mapping class group of an $n$-punctured sphere} 

We choose distinct points $o, \infty$ on $C$  and 
put $C^\circ:=C\ssm \{o, \infty\}$. Let $T\subset \PSL(V)$ denote the subgroup which stabilizes each of these points. The action of $T$ on the tangent space of $C$ at $o$ identifies $T$ with $\GG_m$, but it is convenient to distinguish the two. Note that $T$ acts  on $C^\circ$  simply transitively. 

Consider the configuration space  $\Ss_n(C^\circ)$  of $n$-element subsets of $C^\circ$.
The action of  $T$ on $C^\circ$  induces one on  $\Ss_n(C^\circ)$ and we write $\M_n(C^\circ)$  for its orbit space. As we shall recall below, it has a familiar interpretation, which  will give us a presentation of its orbifold fundamental group.

Let $T\hookrightarrow T^n$ be the diagonal embedding. The choice of a base point of $C^\circ$ gives an isomorphism $C^\circ\cong T$ and this identifies  $\M_n(C^\circ)$  with the $\Sfrak_n\times T$-orbit space of  $\Cs_n(T)\subset T^n$ 
(so the space $n$-tuples of distinct points of $T$ given up to a translation). We prefer however  to work with the kernel of the product
character $T^n\to T$  (which we shall denote by $T_{n-1}$). Note that the  homomorphism   $T_{n-1}\subset T^n\to T\bs T^n$ is an isogeny: it is surjective with kernel  the diagonally embedded copy of $\mu_n$ in $T_{n-1}$.  
We put  
\[
T_{n-1}^\reg:=T_{n-1}\cap \Cs_n(T).
\]
Thus   $\M_n(C^\circ)$ now appears as  the $\Sfrak_n\times\mu_n$-orbit space  of $T_{n-1}^\reg$ and also as the  $\mu_n$-orbit space  of $\Sfrak_n\bs T_{n-1}^\reg$.
It is clear  that $\Sfrak_n\bs T_{n-1}^\reg\subset \Ss_n(T)$ is the set of $n$-element subsets of $T$ whose product is $1$.
One such subset is $\mu_n\subset T$. When we consider it as an element of $\Sfrak_n\bs T_{n-1}^\reg$, we denote it by $[\mu_n]$ and  this will be our base point. We use the same notation for its image  in  one of the above quotients.
We often identify $\mu_n$ with $\ZZ/n$ by taking $\zeta_n:=\exp(2\pi\sqrt{-1}/n)$ as its generator.

The action of $\Sfrak_n$ on  $T_{n-1}^\reg$ is free 
and it is well-known \cite{nguy} that the fundamental group 
$\pi_1(\Sfrak_n\bs T^\reg_{n-1}, [\mu_n])$ is the affine Artin group $\ar(\hat A_{n-1})$ of type $\hat A_{n-1}$. 
Its generators are indexed by  $\ZZ/n$: for  $i \in \ZZ/n$, the generator $\sigma_i$  makes  
$\zeta_n^i$ and $\zeta_n^{i-1}$  trade places (where $\zeta_n:=\exp(2\pi\sqrt{-1}/n)$), 
while keeping the other   powers of $\zeta_n$ fixed: $\zeta_{n}^{i}$ and $\zeta_{n}^{i-1}$  
first travel to each other by staying on the unit circle,  
then they pass each other by `keeping on the right' (traversing  a small arc) until they meet the unit 
circle again  then continue to their journey  to the former position of the other. This can be done by staying in $T^\reg_{n-1}$. There is in fact a natural choice:  note that $\g (t):= (1-t)+t\zeta_n^{-1}$  traverses the interval $[1,\zeta_n^{-1}]$, so its interior  stays inside  the unit circle and hence $1/\g(t)$ traverses an arc from $1$ to $\zeta_n$ whose  interior stays outside the unit circle. Consider the loop 
\[
\tilde\g: t\in [0,1]\mapsto \{ \g(t)^{-1}, \g(t)\zeta_n,\zeta_n^2, \dots, \zeta_n^{n-1}\}\in \Sfrak_n\bs T^n.
\]
Since the $n$ elements on the right hand side stay distinct and have  product  $1$, this is in fact a loop in $\Sfrak_n\bs T^\reg_{n-1}$.
We then  let  $\sigma_k$ be represented by the $\zeta_n^k$-translate of $\tilde\g$. So  the $\mu_n$-action permutes these generators: $\zeta_n$ takes $\sigma_i$ to  
$\sigma_{i+1}$.
The defining (Artin)  relations are:  $\sigma_i$ and $\sigma_j$  commute unless $j=i\pm 1$, in which case we have the simple braid 
relation $\sigma_i\sigma_j\sigma_i=\sigma_j\sigma_i\sigma_j$. Since  
$\Sfrak_n\bs T^\reg_{n-1}\to  \M_n(C^\circ)$ is a $\mu_n$-cover,  we thus have  produced an isomorphism 
\begin{equation}\label{eqn:artinextended}
\pi_1^\orb(\M_n(C^\circ), [\mu_n])\cong \ar(\hat A_{n-1})\rtimes \mu_{n}, 
\end{equation}
where the generator $\zeta_n\in\mu_n$ is represented by the loop 
$\theta \in [0, 2\pi/n]\mapsto \exp(\sqrt{-1}\theta)[\mu_n]$.
We shall derive from this a presentation for $\pi_1^\orb(\M_{[n]}, [\mu_n])$.

The inclusion $C^\circ\subset C$ induces an inclusion $\Ss_n(C^\circ)\subset \Ss_n(C)$. The  inclusion of groups $T\subset\PGL(V)$ induces subsequently a morphism  $\M_n(C^\circ)\to \M_{[n]}$. The latter is in fact a fiber bundle in the orbifold sense, a typical fiber being $\Cs_2(\PP^1\ssm \mu_n)$. Since that fiber is connected,  the  map $\M_n(C^\circ)\to \M_{[n]}$ is a surjection on fundamental  groups. We first determine the fundamental  group of $\M_n(C^\circ)$  and then determine the relations we need to impose to get the fundamental  group of $\M_{[n]}$. 

If $\M_n(C)$ denotes the $T$-orbit space of $\Ss_n(C)$, then the map  $\M_n(C^\circ)\to \M_{[n]}$  factors as $\M_n(C^\circ)\subset\M_n(C)\to \M_{[n]}$. Our justification for introducing this, is the following lemma.

\begin{lemma}\label{lemma:pi1diagram}
The morphism $\M_n(C)\to\M_{[n]}$
induces an isomorphism on orbifold fundamental groups.
\end{lemma}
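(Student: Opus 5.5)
We want to show that $\M_n(C)=T\bs \Ss_n(C)\to \M_{[n]}=\PGL(V)\bs\Ss_n(C)$ induces an isomorphism on orbifold fundamental groups. The plan is to regard this map as a fiber bundle in the orbifold sense and show that its fiber is simply connected.

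Both spaces are quotients of the same space $\Ss_n(C)$, by $T$ and by $\PGL(V)$ respectively. The natural model for the map is therefore the associated bundle
\[
\Ss_n(C)\times_{T}\PGL(V)\;\cong\;\PGL(V)\times_T \Ss_n(C)\longrightarrow \PGL(V)\bs \Ss_n(C).
\]
Concretely, I will write the map $\M_n(C)\to\M_{[n]}$ in the category of orbifolds (quotient stacks) as
\[
[T\bs \Ss_n(C)]\to[\PGL(V)\bs \Ss_n(C)].
\]
This is a fibration whose fiber is the homogeneous space $\PGL(V)/T$.

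The key steps, in order, are as follows.

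\begin{enumerate}
\item Observe that for $n\ge 3$ the action of $\PGL(V)$ on $\Ss_n(C)$ is proper with finite stabilizers. Hence both quotient stacks are orbifolds, and the morphism between them is representable with fiber $\PGL(V)/T$.
\item Identify $\PGL(V)/T$ with the space of ordered pairs of distinct points of $C$, namely $\Cs_2(C)=C\times C\ssm\Delta$, via $g\mapsto (g(o),g(\infty))$. This map is $\PGL(V)$-equivariant, since $T$ is precisely the stabilizer of the pair $(o,\infty)$.
\item Note that $\Cs_2(\PP^1)$ is a $\CC$-bundle over $\PP^1$: the projection $(a,b)\mapsto a$ has fiber $\PP^1\ssm\{a\}\cong\CC$. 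So $\Cs_2(\PP^1)$ is homotopy equivalent to $S^2$; in particular it is connected and simply connected. (Equivalently, $\PGL_2(\CC)\simeq SO(3)$ and $T\simeq S^1$, and the quotient is $SO(3)/SO(2)\cong S^2$.)
\item Apply the long exact homotopy sequence for orbifold fibrations, obtained by passing to the Borel constructions $E\PGL(V)\times_{\PGL(V)}\Ss_n(C)$. This gives
\[
\pi_1(\PGL(V)/T)\to\pi_1^\orb(\M_n(C))\to\pi_1^\orb(\M_{[n]})\to\pi_0(\PGL(V)/T).
\]
Since the two outer terms are trivial, the middle map is an isomorphism.
\end{enumerate}

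The main obstacle is justifying step 4 rigorously in the orbifold setting, that is, making sure that the orbifold fundamental group of $T\bs\Ss_n(C)$ agrees with $\pi_1$ of the Borel construction $ET\times_T\Ss_n(C)$, and likewise for $\PGL(V)$. This holds because the actions are proper with finite stabilizers, so these homotopy quotients compute the orbifold homotopy types. Once that is granted, the fibration
\[
\PGL(V)/T\to ET\times_T \Ss_n(C)\simeq E\PGL(V)\times_T\Ss_n(C)\to E\PGL(V)\times_{\PGL(V)}\Ss_n(C)
\]
is a genuine fiber bundle, and the long exact sequence applies directly.

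Alternatively, I could avoid stacks by first pulling back to the $\Sfrak_n$-cover, using the equivariant quotient by a finite group. I would try the Borel-construction argument first, since it is short and uses nothing beyond the properness of the action.
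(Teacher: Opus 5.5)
Your proof is correct, but it packages the argument differently from the paper.

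The paper never forms the map of homotopy quotients $ET\times_T\Ss_n(C)\to E\PGL(V)\times_{\PGL(V)}\Ss_n(C)$. Instead, for $G=T$ and for $G=\aut(C)=\PGL(V)$, it writes the exact sequence $\pi_1(G)\to\pi_1(\Ss_n(C))\to\pi_1^\orb(G\bs\Ss_n(C))\to 1$. It maps the first sequence to the second with the identity on the middle term. It then notes that $\pi_1(T)\cong\ZZ\to\pi_1(\PGL(V))\cong\ZZ/2$ is onto. A diagram chase shows that both orbifold fundamental groups are the quotient of $\pi_1(\Ss_n(C))$ by the same subgroup.

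The key inputs are really one fact in two guises. By the homotopy sequence of $T\to\PGL(V)\to\PGL(V)/T$, surjectivity of $\pi_1(T)\to\pi_1(\PGL(V))$ is equivalent to $\pi_1(\PGL(V)/T)=1$. That is exactly what you check via $\PGL(V)/T\cong\Cs_2(C)\simeq S^2$.

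What each route buys:
\begin{itemize}
\item Your route gives a geometric picture: the map is an orbifold bundle with fiber $\Cs_2(\PP^1)$, in line with the paper's remark that $\M_n(C^\circ)\to\M_{[n]}$ is a bundle with fiber $\Cs_2(\PP^1\ssm\mu_n)$. It also gets injectivity and surjectivity at once from a single long exact sequence. The price is setting up the relative fibration of Borel constructions.
\item The paper's route needs only the standard exact sequence for a quotient by a connected group acting with finite stabilizers. Beyond that it uses just the one-line observation that the loop $\theta\mapsto\mathrm{diag}(e^{\sqrt{-1}\theta},1)$, a full rotation of the sphere, is nontrivial in $\pi_1(SO(3))$.
\end{itemize}
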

\begin{proof}
Consider the following commutative diagram of fundamental 
groups with exact rows (base points are omitted)
\[
\begin{tikzcd}
\pi_1(T)\rar\dar & \pi_1(\Ss_n(C)) \rar\arrow[d, equal] & \pi_1^\orb(\M_n(\PP(V))) \rar\dar  & 1\\
\pi_1(\aut(C))\rar & \pi_1(\Ss_n(C)) \rar & \pi_1^\orb(\M_{[n]}) \rar & 1
\end{tikzcd}
\]
The lemma then follows from the fact that the first vertical arrow is surjective (of any infinite cyclic group on to a group of order two). 
\end{proof}

The orbifold fundamental group of $\M_{[n]}$ can be identified with the mapping class group of the pair 
$(\PP^1, \mu_n)$, that is,  the connected component group of the group of orientation preserving 
diffeomorphisms of $\PP^1$ which fix $\mu_n$ as a subset. 
A presentation of this group is  due to J.~Birman (see Remark \ref{rem:presentation1}) and as we will 
explain, the following presentation is essentially equivalent to hers. Since we want the presentation to be compatible with the mapping class group interpretation, our path composition is here the functorial one 
and hence words in a fundamental group must  be read from right to left.

\begin{corollary}\label{cor:presentation1}
The orbifold fundamental group of $\M_{[n]}$ is the 
quotient  of $\ar(\hat A_{n-1})\rtimes \mu_n$ by the relations $\zeta_n \equiv \sigma_1\sigma_2\cdots \sigma_{n-1}$
and $\zeta_n^{-1}\equiv \sigma_{n-1}\sigma_{n-2}\cdots \sigma_1$.
\end{corollary}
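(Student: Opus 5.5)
We use Lemma~\ref{lemma:pi1diagram}, which identifies $\pi_1^\orb(\M_{[n]})$ with $\pi_1^\orb(\M_n(C))$. The plan is to compute the latter from $\pi_1^\orb(\M_n(C^\circ),[\mu_n])\cong \ar(\hat A_{n-1})\rtimes\mu_n$, see \eqref{eqn:artinextended}, by determining what the enlargement $\M_n(C^\circ)\subset\M_n(C)$ kills. The space $\M_n(C)$ is the $T$-orbit space of $\Ss_n(C)$, and $\Ss_n(C^\circ)$ is the complement in $\Ss_n(C)$ of the divisors where one of the points equals $o$ or equals $\infty$. Both divisors are irreducible: each is the image of $\Ss_{n-1}(C\ssm\{o\})$, respectively of $\Ss_{n-1}(C\ssm\{\infty\})$, and these are connected. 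Hence $\pi_1^\orb(\M_n(C))$ is the quotient of $\ar(\hat A_{n-1})\rtimes\mu_n$ by the normal closure of two elements, namely a meridian around the locus ``a point hits $o$'' and a meridian around the locus ``a point hits $\infty$''. The inclusion is surjective on $\pi_1$ because it is the complement of a divisor in a smooth orbifold.

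The main step is to write these two meridians as words in the generators. For the meridian at $o$, take the base configuration $\mu_n$ and let the point $1$ make a small loop around $o=0$ while the other points stay fixed. Conjugating by a path that moves $1$ radially toward $0$, this is the same as the point $1$ passing once around $0$, i.e.\ encircling $o$. Now compare it with the loop in $T$-orbit space given by rotating the whole configuration by $\zeta_n$, which is the generator $\zeta_n\in\mu_n$, represented by $\theta\mapsto e^{i\theta}[\mu_n]$ for $\theta\in[0,2\pi/n]$. I expect the following picture identity. The braid $\sigma_1\sigma_2\cdots\sigma_{n-1}$ carries the point at $1=\zeta_n^0$ successively past $\zeta_n^1,\dots,\zeta_n^{n-1}$ on the outside of the circle, while each other point moves back one slot. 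Because $\tilde\gamma$ has the point moving to the later slot pass outside the circle, the composite sends every point $\zeta_n^i$ to $\zeta_n^{i+1}$ by a rotation-like motion, except that the point travelling around the full circle does so on one side of $o$. The rigid rotation $\zeta_n$ differs from it exactly by one loop of a single point around $o$ (or around $\infty$, depending on which side the circuit is made). So the meridian around $o$ equals $\zeta_n^{-1}\sigma_1\cdots\sigma_{n-1}$, up to conjugation and orientation. Symmetrically, the reversed product $\sigma_{n-1}\cdots\sigma_1$, in which the point travels inside, gives $\zeta_n\,\sigma_{n-1}\cdots\sigma_1$ as the meridian around $\infty$. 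Setting both meridians to $1$ gives exactly the two stated relations.

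The obstacle is carrying out this isotopy carefully: getting the conventions right (the functorial path composition, the order of the factors, which of $o$ and $\infty$ is on which side of $\tilde\gamma$) and checking that the motion is a homotopy inside $\Ss_n(C^\circ)$ modulo $T$. I would do this by an explicit isotopy. Move $1$ radially close to $0$, let it go once around $0$ inside a small disc, and compare the result with the motion obtained by ``shrink, then rotate by $2\pi/n$''. Finally, as a sanity check, the composite $\sigma_1\cdots\sigma_{n-1}\sigma_{n-1}\cdots\sigma_1$ should be the full twist of one point around all the others; in $\pi_1(\Ss_n(C))$ this is the image of the generator of $\pi_1(\aut(C))\cong\ZZ/2$ appearing in Lemma~\ref{lemma:pi1diagram}. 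This matches Birman's presentation as cited in Remark~\ref{rem:presentation1}.
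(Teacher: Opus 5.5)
Your proposal follows the paper's proof. You reduce to $\M_n(C)$ via Lemma~\ref{lemma:pi1diagram}, note that $\M_n(C)\ssm\M_n(C^\circ)$ has two irreducible components (a point at $o$, a point at $\infty$), and kill their meridians; the paper identifies these as $\delta_o=\zeta_n^{-1}\sigma_1\cdots\sigma_{n-1}$ and $\delta_\infty=\zeta_n\sigma_{n-1}\cdots\sigma_1$, exactly the words you arrive at.

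Two points in your heuristics need repair. First, under the paper's right-to-left convention, in $\sigma_1\cdots\sigma_{n-1}$ one point runs clockwise \emph{inside} the circle (so around $o$), while in $\sigma_{n-1}\cdots\sigma_1$ it runs counterclockwise outside (around $\infty$); your picture has these reversed, which is harmless only because both meridians are killed. Second, your sanity check is wrong: a single point looping around all the others is trivial in $\pi_1(\Ss_n(C))$ by the spherical braid relation, consistent with $\zeta_n\zeta_n^{-1}=1$, whereas the generator of $\pi_1(\aut(C))$ maps to the full twist $\Delta^2$.
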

\begin{proof}
Observe that  $\M_n(C^\circ)$ is Zariski open in $\M_n(C)$ and that its complement $\M_n(C)\ssm \M_n(C^\circ)$ is a divisor which has two irreducible components:
one is represented by the elements in $\M_n(C)$ which have a point in $\infty$ and the other by those having a point in $o$. So the inclusion $\M_n(C^\circ)\subset \M_n(C)$ is a surjection on fundamental groups with the kernel generated by two elements representing simple loops  $\delta_\infty, \delta_o$ in $\M_n(C^\circ)$ around such a 
component. 

One readily checks that the   element $\zeta_n\sigma_{n-1}\sigma_{n-2}\cdots \sigma_1$ represents the braid in $C^\circ$ which keeps all elements of $\mu_n\ssm \{1\}$ almost fixed 
(\footnote{But if we want to keep the product $1$, we must allow them to move a little.})  
and makes  $1$ follow the loop $\alpha_R$  which first traverses the interval $[1,R]$ (with $R\gg 1$), then the circle $|z|=R$ fully in the counterclockwise direction and then returns to its original position along $[R,1]$. 
We remind the reader of our convention that composition in fundamental groups is written right-to-left.  So $\zeta_n\sigma_{n-1}\sigma_{n-2}\cdots \sigma_1$ means the 
braid represented by $\sigma_1$, then the one represented by $\sigma_2$, etc., ending with $\zeta_n$.
So this represents a simple loop $\delta_\infty$ as above. The loop 
$\zeta_n^{-1}\sigma_{1}\sigma_{2}\cdots \sigma_{n-1}$ has a similar interpretation, but with $1$ traversing the loop $1/\alpha_R$ which first traverses 
$[1,1/R]$, then $|z|=1/R$ in a clockwise direction and finally $[1/R,1]$ and so  this represents a loop  $\delta_0$.
\end{proof}  

\begin{remark}\label{rem:presentation1}
The added relations in the presentation in Corollary \ref{cor:presentation1} broke the $\mu_n$-symmetry.
It is however clear that the relations imply their cyclic transforms: $\zeta_n \equiv \sigma_{k+1}\sigma_2\cdots \sigma_{n-1}\sigma_0\cdots \sigma_{k-1}$ for every $k\in \ZZ/d$ and likewise for $\zeta_n^{-1}$.
We can use this to eliminate $\mu_n$ from our generating set;  the relations then say that $\sigma_1\sigma_2\cdots \sigma_{n-1}$ is cyclically invariant, has  $\sigma_{n-1}\sigma_{n-2}\cdots \sigma_1$ as its inverse,  and is of order $n$,  so that we get in fact a quotient of $\ar(\hat A_{n-1})$.
We can subsequently eliminate $\sigma_0$ as to get even a quotient of the ordinary braid group  $\ar(A_{n-1})$. This gives us Birman's presentation (\cite{birman}, Thm.\  4.5). For many applications it is however convenient to include these redundant generators as this exhibits more of the symmetry (see also Proposition \ref{prop:braidpres} below).
\end{remark}

Let us denote the quotient of $\ar(\hat A_{n-1})\rtimes \mu_n$ defined by Corollary \ref{cor:presentation1} by
$\pi_{[n]}$ so that the corollary  identifies the orbifold fundamental group of $\M_{[n]}$ with this group.
We have a homomorphism  $\pi_{[n]}\to \Sfrak_{\ZZ/n}$ which assigns to $\sigma_i$ the transposition 
$s_i:=(i-1, i)$ and to $\zeta_n$ the $n$-cycle $i\mapsto i+1$: indeed, the latter is equal to $s_1s_2\cdots s_{n-1}$.  This homomorphism is clearly onto. The associated unramified $\Sfrak_{\ZZ/n}$-covering  
 is $\M_{\ZZ/n}\to \M_{[n]}$ and hence the kernel of $\pi_{[n]}\to \Sfrak_{\ZZ/n}$ can be understood as the orbifold fundamental group of $\M_{\ZZ/n}$ (which is usually denoted $\Mcal_{0,n}$).
\smallskip
 
Here is an alternative description of $\pi_{[n]}$ that is also relevant for his paper.
The Coxeter graph  of $\hat A_{n-1}$ is an $n$-gon (which we also denote by $\hat A_{n-1}$) and 
hence its automorphism group $\aut(\hat A_{n-1})$ is a dihedral group of order $2n$ (here we think of 
$\mu_n$ as the index 2 subgroup of $\aut(\hat A_{n-1})$ of its rotations). It is generated by the involution 
$\iota_0$ which fixes $\sigma_0$ and the involution 
$\iota_{0,1}$ which exchanges $\sigma_0$ and $\sigma_1$. The remaining generators 
$\sigma_1, \dots, \sigma_{n-1}$ resp.\ $\sigma_2, \dots, \sigma_{n-1}$ define a subdiagram of type 
$A_{n-1}$ resp.\ $A_{n-2}$ and generate Artin groups of that type (braid groups with  $n$ resp.\ $n-1$ strands). They therefore have Garside elements $\Delta_0$ resp.\  $\Delta_{0,1}$. We have $\Delta_0=\sigma_n\cdots\sigma_1\Delta_{0,1}$. These  Garside elements have the property that
left multiplication  with  a  generator $\sigma_i$ of this Artin subgroup produces the same  Garside element right multiplied with  $\sigma_{\iota_0(i)}$  resp.\  $\sigma_{\iota_{0,1}(i)}$. 
The imposed relations are equivalent to demanding that this remains true for the missing generators $\sigma_0$ resp.\  $\sigma_0, \sigma_1$ and that $\Delta_0^2\equiv\Delta^2_{0,1}\equiv 1$.
We state this as a proposition.

\begin{proposition}\label{prop:braidpres}
The group $\pi_{[n]}$ is also obtained as the quotient of $\ar(\hat A_{n-1})$ for which all the automorphisms induced by the automorphisms of  the Coxeter diagram become inner: the defining relations make  the involution which fixes $\sigma_0$ be defined by conjugation with the Garside element $\Delta_0$ of the $A_{n-1}$-subgraph  spanned by $\sigma_1, \dots, \sigma_{n-1}$ and the involution which exchanges 
$\sigma_0$ and $\sigma_1$ by conjugation with the Garside element $\Delta_{0,1}$ of the $A_{n-2}$-subgraph  spanned by $\sigma_2, \dots, \sigma_{n-1}$ and we also demand that  $\Delta_0^2\equiv\Delta_{0,1}^2\equiv 1$. \hfill $\square$
\end{proposition}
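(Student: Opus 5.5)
The plan is to show that the two presentations define the same quotient of $\ar(\hat A_{n-1})$. I will start from Corollary \ref{cor:presentation1} and Remark \ref{rem:presentation1}. By the Remark, $\pi_{[n]}$ is the quotient of $\ar(\hat A_{n-1})$ by three conditions: $\zeta:=\sigma_1\sigma_2\cdots\sigma_{n-1}$ conjugates $\sigma_i$ to $\sigma_{i+1}$ for all $i\in\ZZ/n$; $\zeta^{-1}=\sigma_{n-1}\cdots\sigma_1$; and $\zeta^n=1$. So it suffices to check two things inside $\ar(\hat A_{n-1})$. First, these relations imply those of Proposition \ref{prop:braidpres}. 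Second, the converse holds. Throughout I use the standard Garside identities in the braid subgroup $\ar(A_{n-1})=\la\sigma_1,\dots,\sigma_{n-1}\ra$: $\Delta_0=(\sigma_1\cdots\sigma_{n-1})\Delta_{0,1}$ (equivalently with the reversed word), $\Delta_0\sigma_i\Delta_0^{-1}=\sigma_{n-i}$ for $1\le i\le n-1$, and similarly for $\Delta_{0,1}$ on $\sigma_2,\dots,\sigma_{n-1}$, where it sends $\sigma_i\mapsto\sigma_{n+1-i}$.

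For the forward direction, the relation $\zeta^{-1}=\sigma_{n-1}\cdots\sigma_1$ says $\sigma_1\cdots\sigma_{n-1}\sigma_{n-1}\cdots\sigma_1=1$, which is the spherical relation. I then write $\Delta_0^2$ as the full twist, which equals $\zeta^n$ in $\ar(A_{n-1})$ (standard: $(\sigma_1\cdots\sigma_{n-1})^n=\Delta^2$). Hence $\Delta_0^2=1$. Since $\Delta_0=\zeta\Delta_{0,1}$ and $\zeta^{-1}=\sigma_{n-1}\cdots\sigma_1$, the full twist $\Delta_{0,1}^2$ on $n-1$ strands equals $(\sigma_{n-1}\cdots\sigma_1)(\sigma_1\cdots\sigma_{n-1})^{-1}\cdots$; more cleanly, $\Delta_0^2=\Delta_{0,1}^2\cdot(\sigma_1\cdots\sigma_{n-1}\sigma_{n-1}\cdots\sigma_1)$ is the standard decomposition of the full twist. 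This gives $\Delta_{0,1}^2=\Delta_0^2=1$. For the conjugation of the missing generator $\sigma_0$, I use $\sigma_0=\zeta^{-1}\sigma_1\zeta$ (cyclic invariance) together with $\Delta_0\zeta\Delta_0^{-1}=\sigma_{n-1}\cdots\sigma_1=\zeta^{-1}$. This gives $\Delta_0\sigma_0\Delta_0^{-1}=\zeta\sigma_{n-1}\zeta^{-1}=\sigma_0$, as required for the involution fixing $\sigma_0$. The involution $\iota_{0,1}$ then follows because $\iota_{0,1}=\iota_0\circ r$ for a suitable rotation $r$, and $\Delta_{0,1}=\zeta^{-1}\Delta_0$ realizes that composite. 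I will check directly that it sends $\sigma_0\leftrightarrow\sigma_1$.

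For the converse, I will assume the relations of Proposition \ref{prop:braidpres}. Then $\Delta_0\Delta_{0,1}^{-1}=\zeta$ conjugates by $\iota_0\iota_{0,1}$, which is a rotation by one step. This yields cyclic invariance, and $\zeta^n=\Delta_0^2=1$. Finally, $\zeta\cdot\sigma_{n-1}\cdots\sigma_1=\Delta_0^2\Delta_{0,1}^{-2}=1$ by the full-twist decomposition, which recovers the second relation of Corollary \ref{cor:presentation1}. Together the two directions identify the two presentations of $\pi_{[n]}$.

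I expect the main obstacle to be sign and orientation bookkeeping: left versus right products, the right-to-left composition convention, and checking that $\Delta_0\Delta_{0,1}^{-1}$ is exactly $\zeta$ rather than $\zeta^{-1}$ or its reverse. I will verify these on $n=3$ before writing the general identities.
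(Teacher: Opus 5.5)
Your strategy is the same as the paper's. The paper's only justification is the paragraph before the proposition: using $\Delta_0=\sigma_{n-1}\cdots\sigma_1\Delta_{0,1}$ (printed there as $\sigma_n\cdots\sigma_1\Delta_{0,1}$, an index typo) and the twisted commutation property of Garside elements, it asserts that the relations of Corollary~\ref{cor:presentation1}, in the $\mu_n$-free form of Remark~\ref{rem:presentation1}, are equivalent to the stated ones. You are filling in exactly that comparison. The following steps are correct as written:
\begin{itemize}
\item $\Delta_0^2=\zeta^n$;
\item the full-twist decomposition $\Delta_0^2=\Delta_{0,1}^2\,(\sigma_1\cdots\sigma_{n-1}\sigma_{n-1}\cdots\sigma_1)$ and its use in both directions;
\item the computation $\Delta_0\sigma_0\Delta_0^{-1}=\zeta\sigma_{n-1}\zeta^{-1}=\sigma_0$.
\end{itemize}

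However, the Garside identity you start from is misoriented, which is exactly the bookkeeping you were worried about. In the braid group
$$\Delta_0=(\sigma_{n-1}\cdots\sigma_1)\Delta_{0,1}=\Delta_{0,1}(\sigma_1\cdots\sigma_{n-1})=\Delta_{0,1}\zeta,$$
not $\zeta\Delta_{0,1}$, and the reversed-word variant $\Delta_{0,1}(\sigma_{n-1}\cdots\sigma_1)$ fails too. For $n=3$ one has $\sigma_2\sigma_1\cdot\sigma_2=\sigma_1\sigma_2\sigma_1$, but $\sigma_1\sigma_2\cdot\sigma_2\neq\sigma_1\sigma_2\sigma_1$. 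As a result, two of your steps fail as written.

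\begin{itemize}
\item \textbf{Converse.} $\Delta_0\Delta_{0,1}^{-1}$ equals $\sigma_{n-1}\cdots\sigma_1$, and it conjugates $\sigma_i$ to $\sigma_{i-1}$, the wrong direction. The correct identity is $\zeta=\Delta_{0,1}^{-1}\Delta_0$. Once conjugation by $\Delta_0$ and $\Delta_{0,1}$ induces $\iota_0$ and $\iota_{0,1}$ on all generators, this acts by $i\mapsto\iota_{0,1}(\iota_0(i))=i+1$, which is the rotation you need.
\item \textbf{Forward direction.} $\zeta^{-1}\Delta_0$ induces $i\mapsto -i-1$, the reflection exchanging $\sigma_0$ and $\sigma_{n-1}$, so your planned direct check of $\iota_{0,1}$ would fail. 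The right formula is $\Delta_{0,1}=\Delta_0\zeta^{-1}$. It holds already in the braid group and equals $\zeta\Delta_0$ in $\pi_{[n]}$. It induces $i\mapsto -(i-1)=1-i$, which is $\iota_{0,1}$. The case $\sigma_1\mapsto\sigma_0$ uses the relation for $\Delta_0\sigma_0\Delta_0^{-1}$ that you have already established.
\end{itemize}

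With these two substitutions both directions go through exactly as you outline; no new idea is needed.
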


\begin{remark}[On the real structure and the base point]\label{rem:real base point}
The base point used for these presentations was 
represented by $\mu_n$.
As a subset of $\CC$ this is real, but its individual points are clearly not and so when used as a base point of $\M_n$ (via $i\in \ZZ/n\mapsto \zeta_n^i$), this might give the impression that it is not a real base point. We want however a real structure on  $\M_n$ for which this is the case.  

For this, we must assume that $C$ has a split real structure. This makes it $\RR$-isomorphic with $\PP^1$ so that $C(\RR)\cong \PP^1(\RR)$. This gives $\Cs_n(C)$ and $\Ss_n(C)$ a real structure and hence also their $\aut(C)$-orbit spaces $\M_n$ and $\M_{[n]}$.
Recall that we proceeded
to  choose a pair of points on $C$ and  then identified their complement with the multiplicative group $\CC^\times$. It was almost tacitly understood that these two points would be real and that the real structure on $\CC^\times$  is the usual one. However, the above desire prompts us make a different choice, namely to assume that  the pair of points we remove is a complex conjugate pair.  This makes the complement isomorphic with a  nonsplit copy of $\GG_m$, that is,  a copy of $\CC^\times$ on which complex conjugation  acts as $z\mapsto \bar z^{-1}$. Then $C(\RR)$ is represented by the unit circle of $\CC^\times$  and therefore to contains  $\mu_n$. 
The real injections $\{1,\dots , n\}\hookrightarrow C$ are those that take their value in this circle. These injections  
have several connected components, but if we divide out by $\aut(C)$, then each of these connected components becomes contractible. In particular, the connected component which contains $i\in\ZZ/n\mapsto \zeta_n^i$ is contractible. So  that subset may serve as our base point for $\M_n$ and $\M_{[n]}$ instead of $\mu_n$ and gives of course the same presentation. 
\end{remark}

\begin{proposition}%\label{prop:}
The kernel of the natural  homomorphism $\pi_{[n]}\to \Sfrak_{\ZZ/n}$  is  normally generated by $\sigma^2_0$. 
\end{proposition}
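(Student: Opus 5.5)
Let $N\subset\pi_{[n]}$ be the normal closure of $\sigma_0^2$ and put $Q:=\pi_{[n]}/N$. Since $\sigma_0\mapsto s_0$ is a transposition, $N$ lies in the kernel, so $\pi_{[n]}\to\Sfrak_{\ZZ/n}$ factors through a surjection $Q\to\Sfrak_{\ZZ/n}$. It remains to show that this surjection is injective, which I would do by showing $|Q|\le n!$.

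First, all the $\sigma_i$ are conjugate in $\pi_{[n]}$: conjugation by $\zeta_n$ sends $\sigma_i$ to $\sigma_{i+1}$. Hence $N$ contains every $\sigma_i^2$, and $Q$ is the quotient of $\ar(\hat A_{n-1})\rtimes\mu_n$ by three sets of relations: the relations $\sigma_i^2=1$ for all $i$, the relation $\zeta_n=\sigma_1\cdots\sigma_{n-1}$, and the relation $\zeta_n^{-1}=\sigma_{n-1}\cdots\sigma_1$ (Corollary \ref{cor:presentation1}). By Remark \ref{rem:presentation1}, the extra relations let us eliminate $\zeta_n$ and then $\sigma_0$. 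Indeed, $\sigma_0=\zeta_n^{-1}\sigma_1\zeta_n$, and $\zeta_n$ is a word in $\sigma_1,\dots,\sigma_{n-1}$. So $Q$ is generated by $\sigma_1,\dots,\sigma_{n-1}$.

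These generators satisfy the braid relations of $A_{n-1}$ and the relations $\sigma_i^2=1$. Those are exactly the Coxeter relations of $\Sfrak_n$. Hence $Q$ is a quotient of the Coxeter group $W(A_{n-1})\cong\Sfrak_n$, which gives $|Q|\le n!$. Combined with the surjection $Q\to\Sfrak_{\ZZ/n}$, this shows $Q\cong\Sfrak_{\ZZ/n}$. Therefore the kernel of $\pi_{[n]}\to\Sfrak_{\ZZ/n}$ equals $N$.

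The only step needing care is checking that no generator escapes the subgroup generated by $\sigma_1,\dots,\sigma_{n-1}$. Concretely, $\zeta_n$ is expressed through the first added relation, and $\sigma_0$ is then a conjugate of $\sigma_1$ by a word in $\sigma_1,\dots,\sigma_{n-1}$. Once that is in place, the argument reduces to the standard Coxeter presentation of $\Sfrak_n$ and is routine. An alternative route is geometric, using the fact that squares of half-twists generate the pure mapping class group of $\Mcal_{0,n}$. The algebraic argument above avoids that.
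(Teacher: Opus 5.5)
Your proof is correct and follows the paper's argument. The paper likewise uses the $\mu_n$-conjugacy to get $\sigma_i^2\equiv 1$ for all $i$, invokes Remark \ref{rem:presentation1} to see that $\pi_{[n]}$ is generated by $\sigma_1,\dots,\sigma_{n-1}$, and concludes that the quotient is a quotient of the Coxeter group of type $A_{n-1}$, hence of $\Sfrak_{\ZZ/n}$; your comparison of $|Q|\le n!$ with the surjection $Q\to\Sfrak_{\ZZ/n}$ just makes explicit the final step the paper leaves implicit.
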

\begin{proof}
It suffices to show  that imposing the relation $\sigma_0^2\equiv 1$ on $\pi_{[n]}$ produces a quotient of  the permutation group $\Sfrak_{\ZZ/n}\cong \Sfrak_n$. 

Since $\mu_n$ permutes the generators $\sigma_i$ transitively, the relation $\sigma_0^2\equiv 1$ implies 
$\sigma_i^2\equiv 1$  for all $i$.  By Remark \ref{rem:presentation1}, $\pi_{[n]}$ is in fact a quotient of the subgroup 
generated by $\sigma_1,\dots, \sigma_{n-1}$. Hence imposing these  relations on $\pi_{[n]}$ must give us a quotient of 
the Coxeter group generated  $s_1, \dots, s_{n-1}$, i.e., a quotient of $\Sfrak_{\ZZ/n}$.
\end{proof}

\begin{remark}%\label{rem:}
We can more generally take any  integer $d>1$ and impose the relation $\sigma_i^d\equiv 1$ for all (or equivalently, some) $d$. The associated quotient $\pi_{[n]}\to \pi_{[n]}(d)$ has   sometimes 
a geometric interpretation. For example, if $d$ divides $n$, then any $n$-element subset  
$D\subset C$ defines a $\mu_d$-cover   $C_D(d)\to C$ which has order $n/d$ ramification over $D$.
This cover is unique up to a covering transformation. We think of the  embedding 
$\mu_d\subset \CC^\times$ as a character $\chi$. Then the corresponding eigenspace 
$H_1(C_D; \CC)^\chi$ is defined up to multiplication with $\mu_d$ and hence the projective space  
$\PP(H_1(C_D; \CC)^\chi)$ is intrinsic to $D$. This defines a 
flat projective bundle over $\M_{[n]}$. The monodromy of $\sigma_i$ is  acts as complex reflection of order $d$ and this representation factors through $\pi_{[n]}(d)$. For some of the Deligne-Mostow examples this is known to produce a faithful representation of $\pi_{[n]}(d)$. 
\end{remark}

\subsection{A line bundle and a central extension}
What  follows can be done for any integer $n\ge 3$, but the parity of $n$ makes for small  case distinctions. Since our interest here is the case when $n$ is even, we make this from now on a \emph{blanket assumption: 
$n=2m$ with $m$ an integer $\ge 2$}. 

Any  $D\in \sym^n(C)$ determines a line in  $\CC[V]_n$ (its `line of equations'), which we shall denote by 
$\ell_D$. This defines a line bundle over $\sym^n(C)$ with $\SL(V)$ action (when $n$ is even, this group  acts via $\PGL(V)=\aut(C)$). By passing to $\SL(V)$-orbit spaces, we get orbifold  line bundles 
\[
\begin{tikzcd}
\ell_{[n]}\arrow[r, hook]\arrow[d] &\ell^\st_{[n]}\arrow[d]\\
\M_{[n]}\arrow[r, hook]  &\M^\st_{[n]}
\end{tikzcd}
\]
%We denote by $\ell^\st_P\to \M^\st_P$ the pull-back of $\ell^\st_{[n]}$. 
It  is only defined up to the $\GG_m$-action: the reason is that $C$ determines $V$ only up to scalar multiplication. 
This ambiguity can be eliminated  by replacing $\ell_D$ by tensoring up with a one-dimensional vector space:
\[
\Ls_D:= \ell_D\otimes (\wedge^2V)^{\otimes m}, 
\]
for  the  $\GL(V)$-action now factors through 
 $\PGL(V)\cong \aut(C)$ and hence this line is canonically associated with $D$. We then let $\Ls^\st_{[n]}$ and $\Ls^\st_P$ and 
their restriction $\Ls_{[n]}$ and $\Ls_P$  have the obvious meaning. Here is a cohomological interpretation of $\Ls_D$.

\begin{lemma}\label{lemma:cohomological interpretation}
We have a natural isomorphism 
$\Lcal_D\cong  H^1(C, \theta^{m-1}_C(-D))$. 
\end{lemma}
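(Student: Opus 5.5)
The plan is to compute both sides in terms of $V$ and then compare them using Serre duality on $C=\PP(V)$. The proof has three steps: a dimension count, a description of $\theta_C$ and of $\ell_D$ in terms of $\Ocal(1)$ and $V$, and a duality argument. Each identification must be canonical, i.e.\ compatible with rescaling $V$, since that compatibility is the whole content of the statement.

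First, the dimension count. $\theta_C$ has degree $2$, so $\theta_C^{m-1}(-D)$ has degree $2(m-1)-2m=-2$. Hence $H^0$ vanishes and $H^1$ is one-dimensional. By Serre duality,
\[
H^1(C,\theta_C^{m-1}(-D))\cong H^0(C,\omega_C\otimes\theta_C^{1-m}(D))^{*}=H^0(C,\omega_C^{m}(D))^{*},
\]
and $\omega_C^m(D)$ has degree $0$, hence is trivial. So both sides are lines, and it remains to produce a natural identification $H^0(C,\omega_C^m(D))\cong \Ls_D^{*}$.

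Second, express everything through $V$. The Euler sequence gives a canonical isomorphism $\theta_C\cong \mathcal{H}om(\Ocal(-1),V\otimes\Ocal/\Ocal(-1))$. Combining this with $\wedge^2$ of $0\to\Ocal(-1)\to V\otimes\Ocal\to V\otimes\Ocal/\Ocal(-1)\to0$ yields $V\otimes\Ocal/\Ocal(-1)\cong \Ocal(1)\otimes\wedge^2V$, and therefore
\[
\theta_C\cong\Ocal(2)\otimes\wedge^2V,\qquad \omega_C\cong\Ocal(-2)\otimes(\wedge^2V)^{-1}.
\]
On the other side, $H^0(C,\Ocal(n))=\CC[V]_n$, and $\ell_D$ is exactly the subspace $H^0(C,\Ocal(n)(-D))\subset H^0(C,\Ocal(n))$ of forms vanishing on $D$. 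This subspace is a line because $\Ocal(n)(-D)$ is trivial.

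Third, combine the two descriptions. We get $\omega_C^m(D)\cong \Ocal(-n)(D)\otimes(\wedge^2V)^{-m}$, and $\Ocal(-n)(D)$ is the dual of the trivial bundle $\Ocal(n)(-D)$. For a trivial line bundle, taking global sections commutes with dualizing, so
\[
H^0(\omega_C^m(D))\cong \ell_D^{*}\otimes(\wedge^2V)^{-m}=\Ls_D^{*}.
\]
Dualizing and composing with Serre duality gives $\Ls_D\cong H^1(C,\theta_C^{m-1}(-D))$. Every map used is functorial in $V$ and equivariant for $\GL(V)$. In particular, the scalars act trivially on both sides, which is precisely why the twist $(\wedge^2V)^{\otimes m}$ was inserted in the definition of $\Ls_D$. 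Hence the isomorphism depends only on $(C,D)$ and globalizes to the orbifold line bundles over $\M^\st_{[n]}$.

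The main obstacle is this canonicity and weight bookkeeping: one must check that the scalar $\lambda$ acts with the same weight on $\ell_D$ (weight $\lambda^{-n}$ as stated) and on $(\wedge^2V)^m$, so that these cancel, and that the Euler-sequence identification introduces no hidden sign or weight. I would verify this directly in coordinates $V=\CC^2$ with $D$ given by a single form $f$.

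As a cross-check for $n=6$ (so $m=3$), one can also use the exact sequence $0\to\theta^2(-D)\to\theta^2\to\Ocal_D\otimes\theta^2\to0$ from the proof of Proposition~\ref{prop:normal bundle identification}. It identifies $H^1$ with the cokernel of restriction, and that should match the description above.
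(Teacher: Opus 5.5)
Your proposal is correct and follows the paper's route. Serre duality reduces the statement to $H^0(C,\omega_C^{m}(D))$, and the canonical identification $\omega_C\cong\Ocal(-2)\otimes(\wedge^2V)^{-1}$ identifies that space with $\Ls_D^{*}$; the paper realizes the same identification concretely, via the residue of $\mu/G$, which gives the generator $\mu^{\otimes m}/F$. One small slip in your weight bookkeeping: scalars act on $\ell_D$ and on $(\wedge^2V)^{\otimes m}$ with inverse weights $\lambda^{-n}$ and $\lambda^{n}$, not the same weight, and that is why they cancel.
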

\begin{proof}
Choose a generator $\mu\in \wedge^2V^*$ and  regard it as a translation invariant $2$-form on $V$. So if $G\in \CC[V]_2$ is homogeneous of degree $2$, then $\mu/G$ is rational $2$-form on $V$ which is homogeneous of degree zero.
By taking its residue at infinity we obtain a rational section of $\omega_C$ (where  $\omega_C$ stands for the sheaf of differentials on $C$) whose divisor is minus the degree 2 divisor defined by $G$. 
Hence,  if $F\in  \CC[V]_{n}$ defines the divisor $D$, then $\mu^{\otimes m}/F$ defines a rational section of $\omega_C^{\otimes m}$ whose divisor is $-D$, that is, a generator of 
$H^0(C,\omega_C^{\otimes m}(D))$. The (Serre) dual of the latter is $H^1(C, \theta^{\otimes (m-1)}_C(-D))$ and so 
a generator of the latter is represented by $F/\mu^{\otimes m}$. 
But clearly, $F/\mu^{\otimes m}$ is also a generator of $\ell_D\otimes (\wedge^2V)^{\otimes m}=\Ls_D$.
\end{proof}

Recall that $\CC[V]_n^\red\subset \CC[V]_n$ stands for  square free polynomials. We shall write  $\Es_{[n]}$  for its $\SL(V)$-orbit space. We regard  $\Es_{[n]}$ as an orbifold  $\GG_m$-bundle over $\M_{[n]}$. Its orbifold  fundamental group will be a central extension of the orbifold  fundamental group of  $\M_{[n]}$. Our aim is  to find  a presentation of this group in these terms and  
for this, we  emulate the procedure of the previous section. 

Let  $z_o\in V^*$ resp.\  $z_\infty\in V^*$ define $o\in C$ resp.\  $\infty\in C$. So $(z_o, z_\infty)$ is a basis of $V^*$ and 
$\CC[V]_n$  is the space of homogeneous polynomials in $z_o$ and $z_\infty$ of degree $n$.  The polynomials with the property 
that the coefficients of $z_o^n$  and $z_\infty^n$ are both equal to $1$  make up an affine subspace  
$\AA_{n-1}\subset \CC[V]_n$ of dimension $n-1$ of which every element can be written as a product
$(t_0z_o-z_\infty)\cdots (t_{n-1}z_o-z_\infty)$, with $t_0\dots t_{n-1}=1$. This  identifies $\AA_{n-1}$
with $\Sfrak_n\bs T_{n-1}$ and hence $\AA_{n-1}^\red:=\AA_{n-1}\cap \CC[V]^\red_n$ with  $\Sfrak_n\bs T^\reg_{n-1}$ (for this means that  $t_0, \dots, t_{n-1}$ are pairwise distinct). The base point $[\mu_n]$ of  $\Sfrak_n\bs T^\reg_{n-1}$ then corresponds with 
$z_o^n-z_\infty^n\in  \CC[V]_n^\red$. We denote the  image of $z_o^n-z_\infty^n$ in various subquotients of $\CC[V]_n^\red$ by $[z_o^n-z_\infty^n]$.
\smallskip

The action of $\PGL(V)\cong \aut(C)$ on $\CC[V]_n$ evidently commutes with the $\GG_m$-action  defined by scalar multiplication. 
Recall that we identified $T$ with $\GG_m$ via its action on $T_oC$. Via the identification,   $(\lambda,t)\in \GG_m\times T$  takes  the monomial  $z_0^{n-k}z_\infty^k$ to $\lambda t^{k-m}z_o^{n-k}z_\infty^k$ and so this  action on $\CC[V]_n$ is faithful. 

Let  $\CC[V]^\circ_n\subset \CC[V]_n$ be the open subset where  the coefficients of both $z_o^n$  and $z_\infty^n$ are  nonzero.
We denote the $T$-orbit space of $ \CC[V]_n^\red $ resp.\  $\CC[V]^\circ_n\cap \CC[V]_n^\red$ by
 $\Es_n(C)$ resp. $\Es_n(C^\circ)$.
So these orbit spaces inherit a faithful residual action by $\GG_m$. The $\GG_m$-quotient of $\Es_n(C^\circ)\subset \Es_n(C)$ is  $\M_n(C^\circ)\subset\M_n(C)$, so that we have  a commutative diagram of $\GG_m$-bundles:
\begin{equation}\label{eqn:Gm diagram}
\begin{tikzcd}
\Es_n(C^\circ) \arrow[r, hook]\arrow[d] &\Es_n(C)\arrow[d]\arrow[r]  &\Es_{[n]}\arrow[d]\\
\M_n(C^\circ)\arrow[r, hook] &\M_n(C) \arrow[r]  &\M_{[n]}
\end{tikzcd}
\end{equation}
We use this to determine the orbifold fundamental  group of $\Es_{[n]}$.

\begin{lemma}\label{lemma:section} 
Every $\GG_m\times T$-orbit in $\CC[V]^\circ_n\cap \CC[V]_n^\red$ meets $\AA_{n-1}^\red$ in a $\mu_{n}$-orbit, where $\mu_n$ acts via the isomorphism $\Sfrak_n\bs T_{n-1}^\reg\cong \AA_{n-1}^\red$. 
This defines a section of $\Es_n(C^\circ)\to  \M_n(C^\circ)$  which  trivializes that  bundle.
\end{lemma}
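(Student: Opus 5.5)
We first show that every $\GG_m\times T$-orbit in $\CC[V]^\circ_n\cap\CC[V]^\red_n$ meets $\AA_{n-1}^\red$, then identify the intersection with a $\mu_n$-orbit, and finally descend to the orbit spaces.

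\emph{Existence.} Let $F=a z_o^n+\dots+b z_\infty^n$ with $a,b\neq 0$. The pair $(\lambda,t)\in\GG_m\times T$ multiplies the coefficient of $z_o^n$ by $\lambda t^{-m}$ and that of $z_\infty^n$ by $\lambda t^{m}$. So we must solve $\lambda t^{-m}a=1$ and $\lambda t^m b=1$. This gives $t^{2m}=t^n=a/b$ and $\lambda=t^m/a$. Such a pair exists, and the resulting form lies in $\AA_{n-1}$. Squarefreeness is preserved by the action, so it lies in $\AA_{n-1}^\red$.

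\emph{Uniqueness up to $\mu_n$.} Suppose $(\lambda,t)$ maps one element $F\in\AA_{n-1}^\red$ to another element of $\AA_{n-1}$. Then $\lambda t^{-m}=\lambda t^m=1$, so $t^n=1$ and $\lambda=t^m=\pm1$. The set of such pairs is the cyclic group $\{(t^m,t):t\in\mu_n\}\cong\mu_n$.

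We then check that this is the stated $\mu_n$-action. Write $F=\prod_i(t_iz_o-z_\infty)$. The pair $(t^m,t)$ sends $z_o^{n-k}z_\infty^k$ to $t^m t^{k-m}z_o^{n-k}z_\infty^k=t^k z_o^{n-k}z_\infty^k$, which amounts to substituting $z_\infty\mapsto t z_\infty$. So $F$ goes to $\prod_i(t_iz_o-tz_\infty)=t^n\prod_i(t^{-1}t_iz_o-z_\infty)=\prod_i(t^{-1}t_iz_o-z_\infty)$. This is multiplication of the roots by $t^{-1}\in\mu_n$, i.e.\ the rotation action of $\mu_n$ on $\Sfrak_n\bs T^\reg_{n-1}$ (up to inverting the generator). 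Keeping the signs and the normalization of the $T$-action straight is the only delicate point.

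\emph{The section.} The first two steps give a bijection
\[
\GG_m\times T\bs(\CC[V]^\circ_n\cap\CC[V]^\red_n)\;\cong\;\mu_n\bs\AA_{n-1}^\red .
\]
The left-hand side is $\M_n(C^\circ)$, and the right-hand side is $\mu_n\bs(\Sfrak_n\bs T^\reg_{n-1})$, which we have already identified with $\M_n(C^\circ)$. Now consider the composite $\AA^\red_{n-1}\hookrightarrow\CC[V]^\circ_n\cap\CC[V]^\red_n\to\Es_n(C^\circ)$, the second map being passage to $T$-orbits. This composite is $\mu_n$-equivariant when $\mu_n$ acts on $\Es_n(C^\circ)$ through its embedding $t\mapsto t^m$ into $\GG_m$.

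Passing to $\mu_n$-quotients therefore gives an orbifold map $\M_n(C^\circ)\to\Es_n(C^\circ)$ that is a section of the $\GG_m$-bundle projection. Combining this section with the $\GG_m$-action yields a map
\[
\GG_m\times_{\mu_n}\AA^\red_{n-1}\to\Es_n(C^\circ).
\]
This map is bijective by the first two steps. It is étale because it is an equivariant map between smooth orbifolds of the same dimension that is bijective on points. It is therefore an isomorphism, and it exhibits $\Es_n(C^\circ)$ as the $\GG_m$-bundle trivialized, in the orbifold sense, by this section.
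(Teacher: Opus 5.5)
\textbf{There is a genuine gap in your last step.} Your first two steps are correct and coincide with the paper's proof. You normalize the coefficients of $z_o^n$ and $z_\infty^n$ to $1$, and you find that the pairs preserving $\AA_{n-1}$ are $(\zeta^m,\zeta)$ with $\zeta\in\mu_n$ (the paper's $((-1)^k,\zeta_n^k)$), acting on $\AA^\red_{n-1}$ by rotating the roots.

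The gap sits at the words ``its embedding $t\mapsto t^m$ into $\GG_m$''. This map $\mu_n\to\GG_m$ is not injective: its image is $\{\pm1\}$. Because that image is nontrivial, your $\mu_n$-equivariant map $\AA^\red_{n-1}\to\Es_n(C^\circ)$ descends only to a map $\M_n(C^\circ)=\mu_n\bs\AA^\red_{n-1}\to\Es_n(C^\circ)/\{\pm1\}$. It does not give a section of $\Es_n(C^\circ)$. Concretely, $(-1,\zeta_n)\cdot F$ lies in $\AA^\red_{n-1}$ and has image $(-1)\cdot[F]$ in $\Es_n(C^\circ)$. So the image of $\AA^\red_{n-1}$ meets the fibre through $[F]$ in the points $\pm[F]$, and these differ whenever the coefficient of $z_o^{n-1}z_\infty$ in $F$ is nonzero.

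Your closing isomorphism $\GG_m\times_{\mu_n}\AA^\red_{n-1}\cong\Es_n(C^\circ)$ does hold. The reason is not that it is ``bijective on points'', which ignores isotropy groups. It holds because the action map $(\GG_m\times T)\times\AA^\red_{n-1}\to\CC[V]^\circ_n\cap\CC[V]^\red_n$ is \'etale with the $\mu_n$-orbits as fibres. But $\mu_n$ acts on the factor $\GG_m$ through $\zeta\mapsto\zeta^m$, so this presents $\Es_n(C^\circ)$ as a twisted product, not a trivial one.

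\textbf{The step cannot be repaired, because the trivialization is false.} Take $F_0=z_o^n+z_\infty^n\in\AA^\red_{n-1}$. The isotropy group of $\M_n(C^\circ)$ at its class is $\mu_n\subset T$, the stabilizer of its divisor. But $\zeta_n\in T$ sends $F_0$ to $-F_0$, so every point of the fibre of $\Es_n(C^\circ)$ over that class has isotropy only $\mu_m$. In $\M_n(C^\circ)\times\GG_m$ these two isotropy groups would coincide.

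The failure is also visible on fundamental groups. The function $c_0c_n$ (product of the coefficients of $z_o^n$ and $z_\infty^n$) is $T$-invariant of $\GG_m$-weight $2$. It therefore induces a homomorphism $\pi_1^\orb(\Es_n(C^\circ))\to\pi_1(\CC^\times)=\ZZ$. This homomorphism sends $\sigma$ to $2$, and sends the loop $\theta\in[0,2\pi/n]\mapsto z_o^n+e^{\sqrt{-1}n\theta}z_\infty^n$ to $1$; that loop lifts $\zeta_n^{\pm1}$. Hence every lift of $\zeta_n$ has odd, hence nonzero, image, and so has infinite order. The extension of $\ar(\hat A_{n-1})\rtimes\mu_n$ by $\sigma^\ZZ$ therefore does not split. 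In particular it is not the product $\sigma^\ZZ\times(\ar(\hat A_{n-1})\rtimes\mu_n)$ that the paper then derives from this lemma.

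The paper's own proof makes the same unjustified leap (``This implies that the image of $\AA_{n-1}^\red$ in $\Es_n(C^\circ)$ defines a section''), even though the stabilizer $((-1)^k,\zeta_n^k)$ it writes down already shows the nontrivial $\GG_m$-component. What your argument and the paper's actually prove is the first sentence of the lemma. They also give a trivialization of $\Es_n(C^\circ)/\{\pm1\}$, the $\GG_m$-bundle of the square of the line bundle, since $\zeta^{2m}=1$ on $\mu_n$.
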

\begin{proof}
Let $u=u_0z_o^n+\cdots +u_n z_\infty^n\in \CC[V]^\circ_n$ , so with $(u_0,u_n)\not=(0,0)$.
Note that $(\lambda,t)\in  \GG_m\times T$ acts on $z_o^n$ resp.\ $z_\infty^n$ as multiplication with $\lambda t^{-m}$ resp.\  $\lambda t^{m}$. It is clear that we can find $(\lambda,t)$ such that 
$(\lambda t^{-m}u_0, \lambda t^{m}u_1)=(1,1)$. This proves that the $\GG_m\times T$-orbit of $u$ meets $\AA_{n-1}$.
It is  clear that if  $u\in \CC[V]_n^\red$, then this intersection lies in $\AA_{n-1}^\red$. 

To see that this intersection  is in fact a $\mu_n$-orbit, note 
that $(\lambda t^{-m}, \lambda t^{m})=(1,1)$ if and only if $(\lambda,t)=((-1)^k, \zeta_n^k)$ for some $k\in\ZZ$ 
and that  $((-1)^k, \zeta_n^k)$  takes the the monomial  $z_0^{n-k}z_\infty^k$ to $\zeta_n^kz_o^{n-k}z_\infty^k$. So  
this is an orbit of a $\mu_n$-action on $\AA_{n-1}$ which covers the $\mu_n$-action on $\Ss_n(C^\circ)$. This implies that the  image of $\AA_{n-1}^\red$  in $\Es_n(C^\circ) $ defines a section of $\Es_n(C^\circ)\to  \M_n(C^\circ)$. 
\end{proof}

The diagram  \eqref{eqn:Gm diagram} gives us a  corresponding diagram of central extensions of the orbifold fundamental groups of the bottom row by the  infinite cyclic group $\pi_1(\GG_m,1)$. A generator $\sigma$ of $\pi_1(\GG_m,1)$  is  represented by the loop $\theta\in [0, 2\pi]\mapsto e^{\sqrt{-1}\theta}\in \GG_m$.  
%We write this group multiplicatively, and so it is equal to $\sigma^\ZZ$. 

By Lemma \ref{lemma:pi1diagram}, the right bottom map of diagram \eqref{eqn:Gm diagram} induces an isomorphism on orbifold fundamental groups. Hence the same is true for the right top map. This brings us in a situation similar to that of the previous section. Lemma \ref{lemma:section} and the isomorphism \eqref{eqn:artinextended} identify the orbifold fundamental group of $\Es_n(C^\circ)$ with $\sigma^\ZZ\times (\ar(\hat A_{n-1})\rtimes \mu_n)$. Note that
$\Es_n(C)\ssm \Es_n(C^\circ)$ is a divisor with two irreducible components, consisting of the $T$-orbit space of the polynomials for which the coefficient of $z_o^n$  resp.\ $z_\infty^n$ is zero (i.e,  divisible by $z_\infty$ resp.\ $z_o$). So  a simple loop $\hat\delta_\infty$ resp. $\hat\delta_0$ around  them, when expressed as a word in  $\sigma^\ZZ\times (\ar(\hat A_{n-1})\rtimes \mu_n)$  produces the relations that define the orbifold fundamental group of $\Es_n(C)$ and hence of $\Es_{[n]}$.

\begin{theorem}\label{thm:central ext}
The orbifold fundamental group $\pi_1^\orb( \Es_{[n]}, [z_o^n-z_\infty^n])$ is the quotient of 
$\ar(A_1\times \hat A_{n-1})\rtimes \mu_n$ by the relations 
$\sigma\equiv \zeta_n^{-1}\sigma_1\sigma_2\cdots \sigma_{n-1}$
and $\sigma\equiv \zeta_n\sigma_{n-1}\sigma_{n-2}\cdots \sigma_1$.
\end{theorem}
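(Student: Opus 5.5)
The plan is to follow exactly the procedure used for Corollary~\ref{cor:presentation1}, one level up in the $\GG_m$-bundles of diagram~\eqref{eqn:Gm diagram}. By Lemma~\ref{lemma:pi1diagram}, the bottom right arrow induces an isomorphism on orbifold fundamental groups. Both vertical maps on the right are $\GG_m$-bundles and the right square is cartesian up to the residual action. The five lemma applied to the two central extensions by $\pi_1(\GG_m)=\sigma^\ZZ$ then shows that $\Es_n(C)\to\Es_{[n]}$ is also an isomorphism on $\pi_1^\orb$. So it suffices to present $\pi_1^\orb(\Es_n(C),[z_o^n-z_\infty^n])$.

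First I present $\Es_n(C^\circ)$. Lemma~\ref{lemma:section} trivializes $\Es_n(C^\circ)\to\M_n(C^\circ)$ by the image of $\AA^\red_{n-1}$, with the point $z_o^n-z_\infty^n$ corresponding to $[\mu_n]$. Combined with \eqref{eqn:artinextended}, this gives
\[
\pi_1^\orb(\Es_n(C^\circ))\cong\sigma^\ZZ\times(\ar(\hat A_{n-1})\rtimes\mu_n)=\ar(A_1\times\hat A_{n-1})\rtimes\mu_n .
\]
Here $\sigma$ is the $\GG_m$-fibre loop. The lifts of $\sigma_i$ are the loops in $\AA^\red_{n-1}$ themselves. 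The lift of $\zeta_n$ must be pinned down: the loop $\theta\mapsto e^{\sqrt{-1}\theta}[\mu_n]$, $\theta\in[0,2\pi/n]$, has to be realized in $\CC[V]_n$ by the action of $(\lambda,t)$ with $t=e^{\sqrt{-1}\theta}$ and $\lambda=t^m$, so that the endpoint is $(\lambda,t)=(-1,\zeta_n)$. This is exactly the element identified in the proof of Lemma~\ref{lemma:section}, which acts on $\AA_{n-1}$ as the $\mu_n$-action. The loop stays in the section's image only up to the $\GG_m$-factor, and this is the point where a power of $\sigma$ could sneak in. I would fix conventions so that this lift is the generator called $\zeta_n$ in the statement.

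Next, $\Es_n(C)\ssm\Es_n(C^\circ)$ is a divisor with two irreducible components, where the coefficient of $z_o^n$ resp.\ $z_\infty^n$ vanishes. Its complement has codimension $\ge2$ issues only in the usual sense. Hence $\pi_1^\orb(\Es_n(C^\circ))\to\pi_1^\orb(\Es_n(C))$ is onto with kernel normally generated by meridians $\hat\delta_\infty,\hat\delta_0$. As in Corollary~\ref{cor:presentation1}, the braid $\zeta_n\sigma_{n-1}\cdots\sigma_1$ downstairs is the loop $\delta_\infty$, moving the root $t_0=1$ around a big circle $|z|=R$. So $\hat\delta_\infty$ equals the lift of this braid to $\AA^\red_{n-1}$ times a correction $\sigma^{k}$. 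The correction $\sigma^k$ measures how the monic normalisation (coefficient of $z_\infty^n$ equal to $1$) rotates, relative to the honest loop in $\CC[V]_n$ that lets one linear factor $t_0z_o-z_\infty$ wind around $\infty$.

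To compute $k$, I track the product of the leading coefficients. The path in which the factor $(t_0z_o-z_\infty)$ has $t_0$ going once around a large circle, with the other factors fixed, is a loop in $\CC[V]^\circ_n\cap\CC[V]^\red_n$. After projecting, it is a small loop around the component ``coefficient of $z_o^n$ vanishes'' (since $\prod t_i$ is that coefficient), and so it is $\hat\delta_\infty$. Renormalising into $\AA_{n-1}$ requires rescaling by $(\lambda,t)$ with $\lambda t^{-m}\prod t_i=1$ and $\lambda t^m=1$. Following $\arg$ along the loop gives the winding of $\lambda$, which I expect to be $-1$ after the $\zeta_n$-convention above. The relation then reads $\sigma\equiv\zeta_n^{-1}\sigma_1\cdots\sigma_{n-1}$; I would double-check the signs by also checking the inverse loop. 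The $\hat\delta_0$ computation is symmetric under $z_o\leftrightarrow z_\infty$, which inverts $t$ and reverses the order of the generators, giving $\sigma\equiv\zeta_n\sigma_{n-1}\cdots\sigma_1$.

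The main obstacle is this bookkeeping of the $\sigma$-exponent, including the fractional winding hidden in the chosen lift of $\zeta_n$. I would settle it by an explicit sanity check: project the relations to $\pi_{[n]}$, where they must reduce to Corollary~\ref{cor:presentation1}, and compare them with the abelianisation, where $\sigma$ should map consistently with the degree $n(n-1)$ of the discriminant.
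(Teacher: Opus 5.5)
Your outline is the paper's own argument: reduce to $\Es_n(C)$, identify $\pi_1^\orb(\Es_n(C^\circ))$ with $\sigma^\ZZ\times(\ar(\hat A_{n-1})\rtimes\mu_n)$ via Lemma~\ref{lemma:section} and \eqref{eqn:artinextended}, then kill two meridians, each $\sigma^k$ times a word of Corollary~\ref{cor:presentation1}. The paper likewise leaves the exponent as ``$\sigma^k$''. But the two things you defer are not bookkeeping, and the first one cannot be done.

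\emph{No lift of $\zeta_n$ has order $n$.} The subgroup of $\GG_m\times T$ preserving $\AA_{n-1}$ is $\{((-1)^j,\zeta_n^j)\}$, and its $\GG_m$-component is nontrivial. Hence $\Es_n(C^\circ)\cong(\GG_m\times\AA^\red_{n-1})/\mu_n$, with $\zeta_n$ acting on the $\GG_m$-factor by $-1$. The image of $\AA^\red_{n-1}$ meets a generic fibre in two points $\pm[u]$, so it is not a section. Your lift $\tilde\zeta$, which makes half a turn in the fibre, satisfies $\tilde\zeta^{\,n}=\sigma^m$. More generally every lift of $\zeta_n$ has $n$-th power in $\sigma^{m+n\ZZ}\cdot\ar(\hat A_{n-1})$, so for no choice of conventions is there a splitting $\sigma^\ZZ\times(\ar(\hat A_{n-1})\rtimes\mu_n)$.

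\emph{With $\sigma$ the full loop, no exponent works.} The discriminant $\Delta$ is $\SL(V)$-invariant and homogeneous of degree $2(n-1)$ in the coefficients (not $n(n-1)$). So $\arg\Delta$ defines $w\colon\pi_1^\orb(\Es_{[n]})\to\ZZ$ with $w(\sigma_i)=1$ and $w(\sigma)=2(n-1)$. In the presented group, however, every homomorphism to $\ZZ$ kills the finite-order element $\zeta_n$. By either relation it then sends $\sigma$ to $(n-1)$ times the image of $\sigma_1$. So the abelianization check you propose shows that the statement, read literally with $\sigma$ the full loop, cannot hold; it does not confirm $k=-1$.

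\emph{The missing idea: take $\sigma$ to be the half-turn.} That is, work with $\Es_{[n]}/\{\pm1\}$ and $\Es_n(C^\circ)/\{\pm1\}$. This is what the main text actually uses: $s$ acts on $\N_s$ as the order-$2$ element of the circle action, and $\hat\sigma_s$ is a half circle. After this quotient $\mu_n$ acts trivially on the fibre $\GG_m/\{\pm1\}$. Lemma~\ref{lemma:section} then really does give a trivialization. The order-$n$ lift of $\zeta_n$ is the isotropy element $(-1,\zeta_n)$ at the base point, namely $\sigma^{-1}\tilde\zeta$.

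The exponent then needs no path-tracking. Since $\Delta$ has even degree, $w$ is still defined, now with $w(\sigma)=n-1$, $w(\sigma_i)=1$ and $w(\zeta_n)=0$. A meridian of either boundary component (coefficient of $z_o^n$ or of $z_\infty^n$ vanishing) has $w=0$, because generic forms there are square-free. Hence $k(n-1)+(n-1)=0$, i.e.\ $k=-1$ for both meridians, which gives exactly the two relations in the statement.
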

\begin{proof}
We can take for $\hat\delta_\infty$ the loop 
\[
t\in [0,1]\mapsto (z_o -\alpha_R(t)z_\infty)(z_o-\zeta_nz_\infty) \dots (z_o-\zeta_n^{n-1}z_\infty),
\]
where $\alpha_R (t)$ is a  loop in $\CC\ssm\{\zeta_n, \dots, \zeta_n^{n-1}\}$ based at $1$ as in the proof of Corollary \ref{cor:presentation1}:  it first traverses the interval $[1,R]$ (with $R\gg 1$), then the full circe $|z|=R$ in the counterclockwise direction and then returns along $[R,1]$. It is straightforward to check that this loop represents $\sigma^k\zeta_n\sigma_{n-1}\sigma_{n-2}\cdots \sigma_1$.
Likewise, we can take for $\hat\delta_0$ the loop 
\[
t\in [0,1]\mapsto  (\alpha_R(t)z_o -z_\infty)(z_o-\zeta_nz_\infty) \dots (z_o-\zeta_n^{n-1}z_\infty),
\]
This represents $\sigma^k\zeta_n^{-1}\sigma_1\sigma_2\cdots \sigma_{n-1}$.
\end{proof}

\begin{proposition}\label{prop:alternative presentation}
The group $\pi_1^\orb( \Es_{[n]}, [z_o^n-z_\infty^n])$ is also a quotient of  the Artin group 
$\ar(A_1\times \hat A_{n-1})$ for which all the Coxeter diagram automorphisms  become inner:  the rotation which take $\sigma_i$ to $\sigma_{i+1}$ resp.\ $\sigma_{i-1}$  is induced by conjugation with  $\sigma _1\cdots \sigma_{n-1}$ resp.\
$\sigma_{n-1}\cdots \sigma_1$ and the involution which exchanges $\sigma_i$ and $\sigma_{-i}$ by is induced by conjugation with the Garside element of  the $A_{n-1}$-subgraph  spanned by $\sigma_1, \dots, \sigma_{n-1}$. Furthermore, $(\sigma_1\cdots \sigma_{n-1})^n\equiv(\sigma_{n-1}\cdots \sigma_1)^n\equiv\sigma^n$. 
\end{proposition}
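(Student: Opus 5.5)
The plan is to start from the presentation of Theorem~\ref{thm:central ext} and eliminate the generator $\zeta_n$. The two relations give
\[
\zeta_n\equiv \sigma_1\sigma_2\cdots\sigma_{n-1}\sigma^{-1},\qquad \zeta_n\equiv \sigma\,(\sigma_{n-1}\cdots\sigma_1)^{-1}.
\]
Since $\sigma$ is central (it comes from the fiber $\GG_m$, and in $\ar(A_1\times\hat A_{n-1})$ it commutes with every $\sigma_i$, so it also commutes with $\zeta_n$), $\zeta_n$ lies in the subgroup generated by $\sigma,\sigma_0,\dots,\sigma_{n-1}$. Hence the group is a quotient of $\ar(A_1\times\hat A_{n-1})$. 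Conjugation by $\zeta_n$ is the rotation $\sigma_i\mapsto\sigma_{i+1}$, and $\sigma$ is central, so conjugation by $\sigma_1\cdots\sigma_{n-1}$ induces this rotation. Likewise conjugation by $\zeta_n^{-1}$, and hence by $\sigma_{n-1}\cdots\sigma_1$ (up to the central $\sigma$), induces the inverse rotation.

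For the power relation, I would use $\zeta_n^n=1$ in $\mu_n$. Together with centrality of $\sigma$ this gives
\[
(\sigma_1\cdots\sigma_{n-1})^n=(\zeta_n\sigma)^n=\sigma^n,
\]
and similarly $(\sigma_{n-1}\cdots\sigma_1)^n=(\zeta_n^{-1}\sigma)^n=\sigma^n$.

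Next comes the involution $\sigma_i\mapsto\sigma_{-i}$, which is not among the original generators. Inside the braid subgroup on $\sigma_1,\dots,\sigma_{n-1}$, conjugation by the Garside element $\Delta_0$ sends $\sigma_i$ to $\sigma_{n-i}=\sigma_{-i}$ for $1\le i\le n-1$. It remains to check that $\Delta_0\sigma_0\Delta_0^{-1}=\sigma_0$. To do this, write $\sigma_0=\zeta_n\sigma_{n-1}\zeta_n^{-1}$, or better $\sigma_0=\rho\,\sigma_{n-1}\rho^{-1}$ with $\rho=\sigma_1\cdots\sigma_{n-1}$. Then use the standard identity in the braid group
\[
\Delta_0\,\rho\,\Delta_0^{-1}=\sigma_{n-1}\cdots\sigma_1,
\]
which gives
\[
\Delta_0\sigma_0\Delta_0^{-1}=(\sigma_{n-1}\cdots\sigma_1)\,\sigma_1\,(\sigma_{n-1}\cdots\sigma_1)^{-1}.
\]
Since $\sigma_{n-1}\cdots\sigma_1$ induces the rotation $i\mapsto i-1$ up to a central factor, this equals $\sigma_0$.

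The main obstacle is that last step: one must ensure that the relations really force $\rho^{-1}$ and $\sigma_{n-1}\cdots\sigma_1$ to agree up to the central $\sigma$, that is,
\[
\rho\cdot(\sigma_{n-1}\cdots\sigma_1)\equiv\sigma^2,
\]
which follows by multiplying the two defining relations. Once that is in hand, the dihedral automorphism group is generated by the rotation and this involution, and all of these automorphisms are inner, as claimed.

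Conversely, I would note that these relations recover the presentation of Theorem~\ref{thm:central ext}. Define $\zeta_n:=\rho\sigma^{-1}$. The stated relations make $\zeta_n$ act as the rotation and give $\zeta_n^n=1$. The product relation then yields the second defining relation. So the two presentations are equivalent.
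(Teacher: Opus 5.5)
Your derivation of the stated relations from Theorem~\ref{thm:central ext} is correct, and it is the intended route. The paper gives no separate argument here: the proposition is meant as a rewriting of that theorem, much as Proposition~\ref{prop:braidpres} rewrites Corollary~\ref{cor:presentation1}. The following steps are all valid:
\begin{itemize}
\item eliminating $\zeta_n=\rho\sigma^{-1}$, where $\rho=\sigma_1\cdots\sigma_{n-1}$;
\item using centrality of $\sigma$ to transfer the rotation to conjugation by $\rho$ and by $\rho':=\sigma_{n-1}\cdots\sigma_1=\zeta_n^{-1}\sigma$;
\item getting $\rho^n=\rho'^n=\sigma^n$ from $\zeta_n^n=1$;
\item checking $\Delta_0\sigma_0\Delta_0^{-1}=\sigma_0$ via $\sigma_0=\rho\sigma_{n-1}\rho^{-1}$ and $\Delta_0\rho\Delta_0^{-1}=\rho'$.
\end{itemize}
This is all the paper later uses: that $G(\Ps_s)$ is a quotient of the Artin group on which the diagram automorphisms act by inner automorphisms. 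Your ``main obstacle'' paragraph is superfluous. The involution step only needs that conjugation by $\rho'$ is the inverse rotation, which you already have from $\rho'=\zeta_n^{-1}\sigma$.

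The closing ``conversely'' paragraph, however, is wrong as written. The product relation $\rho\rho'\equiv\sigma^2$ is not among the relations listed in the proposition, and it does not follow from them.
\begin{itemize}
\item Let $H$ be the quotient of $\ar(A_1\times\hat A_{n-1})$ by the listed relations only: conjugation by $\rho$, $\rho'$, $\Delta_0$ acting as stated, and $\rho^n\equiv\rho'^n\equiv\sigma^n$.
\item In the abelianization all $\sigma_i$ map to one class $x$ and $\sigma$ maps to $s$. The only surviving relation is $n\bigl((n-1)x-s\bigr)=0$.
\item Hence $H^{ab}\cong\ZZ\oplus\ZZ/n$, with torsion generated by $y=(n-1)x-s$.
\item The element $\rho\rho'\sigma^{-2}$ maps to $2y$, which is nonzero since $n\ge 4$.
\item In the group of Theorem~\ref{thm:central ext}, by contrast, $\rho\rho'=\zeta_n\sigma\zeta_n^{-1}\sigma=\sigma^2$.
\end{itemize}
So the listed relations present a strictly larger group, and the two presentations are not equivalent.

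To get a genuine alternative presentation you must add the relation $\sigma_{n-1}\cdots\sigma_1\equiv\sigma^2(\sigma_1\cdots\sigma_{n-1})^{-1}$. With $\zeta_n:=\rho\sigma^{-1}$, this is exactly the second relation of Theorem~\ref{thm:central ext}, and the conditions on $\rho'$ and $\Delta_0$ then become redundant. So either drop the equivalence claim, reading the proposition as asserting only that these relations hold, or add that relation.
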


\end{document}